\newtheorem{thm}{Theorem}[section]
\newtheorem{cor}[thm]{Corollary}
\newtheorem{lem}[thm]{Lemma}
\newtheorem{prop}[thm]{Proposition}
\newtheorem{rem}[thm]{Remark}
\newcommand{\Hom}{\mathrm{Hom}}
\newcommand{\Fq}{\mathbb F_q}
\newcommand{\QQ}{\bar{\mathbb Q}_\ell}
\newcommand{\R}{\mathrm R}
\newcommand{\Gal}{\mathrm {Gal}}
\newcommand{\AAA}{\mathbb A}
\newcommand{\PP}{\mathbb P}
\newcommand{\FF}{\mathcal F}
\newcommand{\GG}{\mathbb G}
\newcommand{\GGG}{\mathcal G}
\newcommand{\HH}{\mathrm H}
\newcommand{\HHH}{{\mathcal H}}
\newcommand{\LL}{{\mathcal L}}
\newcommand{\Tr}{\mathrm {Tr}}
\newcommand{\Dbc}{{\mathcal D}^b_c}
\newcommand{\Swan}{\mathrm{Swan}}
\newcommand{\Sh}{\mathcal Sh}
\newcommand{\PPP}{\mathcal P}
\newcommand{\FT}{\mathrm{FT}}
\newcommand{\RR}{\mathcal R}
\newcommand{\Gmk}{\GG_{m,\bar k}}
\newcommand{\istar}{\iota^\star}
\title{Local convolution of $\ell$-adic sheaves on the torus}
\author{Antonio Rojas-Le\'on}
\address{Departamanto de \'Algebra,
Universidad de Sevilla, Apdo 1160, 41080 Sevilla, Spain}
\address{E-mail: arojas@us.es}
\thanks{Partially supported by P08-FQM-03894 (Junta de Andaluc\'{\i}a), MTM2010-19298 and FEDER}
\begin{document}

\renewcommand{\thefootnote}{}
\footnote{Mathematics Subject Classification: 14F20,14F05}

\begin{abstract}
For $K$ and $L$ two $\ell$-adic perverse sheaves on the one-dimensional torus $\Gmk$ over the algebraic closure of a finite field, we show that the local monodromies of their convolution $K\ast L$ at its points of non-smoothness is completely determined by the local monodromies of $K$ and $L$. We define local convolution bi-exact functors $\rho_{(s,t)}^{(u)}$ for every $s,t,u\in\PP^1_{\bar k}$ that map continuous $\ell$-adic representations of the inertia groups at $s$ and $t$ to a representation of the inertia group at $u$, and show that the local monodromy of $K\ast L$ at $u$ is the direct sum of the $\rho_{(s,t)}^{(u)}$ applied to the local monodromies of $K$ and $L$. This generalizes a previous result of N. Katz for the case where $K$ and $L$ are smooth, tame at $0$ and totally wild at infinity.  
\end{abstract}

\maketitle

\section{Introduction}

Let $k=\Fq$ be a finite field of characteristic $p$, $\bar k$ a fixed algebraic closure and $G$ a one-dimensional smooth affine group scheme over $\bar k$ (so $G$ is either the affine line $\AAA^1_{\bar k}$ or the torus $\Gmk$). Fix a prime number $\ell\neq p$, and let $\Dbc(G,\QQ)$ be the derived category of constructible $\ell$-adic sheaves on $G$. The convolution is a triangulated bifunctor $\Dbc(G,\QQ)\times\Dbc(G,\QQ)\to\Dbc(G,\QQ)$ given by \cite[8.1.8]{katz1990esa}
$$
K\ast_!L=\R\sigma_!(K\boxtimes L)
$$
where $\sigma:G\times G\to G$ is the group operation map. 

If $G$ is obtained from a group scheme $G_0$ over $k$ by extension of scalars, then $K\ast_! L$ is naturally defined over $k$ for every $K,L\in\Dbc(G_0,\QQ)$. Moreover, on the level of Frobenius traces the operation corresponds to the usual convolution in finite abelian groups: for every $t\in k$, if $K(t)$ denotes the trace of a geometric Frobenius element at $t$ acting on the stalk of $K$ at a geometric point over $t$ (and similarly for $L$) then
$$
(K\ast_! L)(t)=\sum_{uv=t}K(u)L(v).
$$
This is an easy consequence of Grothendieck's trace formula.

There is another variant ($\star$-convolution) where one takes direct image without supports: $K\ast_\star L=\R\sigma_*(K\boxtimes L)$. If we ignore objects belonging to a certain subcategory of $\Dbc(G,\QQ)$ (those whose cohomology sheaves are of Artin-Schreier type in the additive case, and of Kummer type in the multiplicative case) both operations coincide, and they preserve the subcategory of perverse objects \cite[2.6]{katz1996rls},\cite[Proposition 3.6.4]{gabber1996faisceaux}.

If $G=\AAA^1_{\bar k}$, for any non-trivial additive character $\psi:k\to\QQ^\star$ the Fourier transform with respect to $\psi$ $\FT^\psi:\Dbc(\AAA^1_{\bar k},\QQ)\to\Dbc(\AAA^1_{\bar k},\QQ)$ is an auto-equivalence of categories and interchanges convolution and tensor product \cite[Corollaire 9.6]{brylinski1986transformations}. For a perverse object $K\in\Dbc(\AAA^1_{\bar k},\QQ)$, Laumon's local Fourier transform describes the local monodromies of $\FT^\psi(K)$ at its points of non-smoothness in terms of those of $K$. In particular, if $K,L\in\Dbc(\AAA^1_{\bar k},\QQ)$ are perverse, we can completely determine the local monodromies of $K\ast_! L$ in terms of those of $K$ and $L$ (modulo Artin-Schreier objects), see for instance \cite[2.7]{laumon1987transformation}.

In the multiplicative case things are more complicated, since there is no algebraic multiplicative analog of the $\ell$-adic Fourier transform. For a particular class of objects $K\in\Dbc(\Gmk,\QQ)$ (those which are smooth on $\Gmk$, tamely ramified at $0$ and totally wild at infinity) N. Katz and O. Gabber proved \cite[Chapters 6 and 7]{katz1988gauss} that the local monodromies at $0$ and $\infty$ of $K\ast_!L$ are completely determined by those of $K$ and $L$. In this article we will generalize this to arbitrary objects $K,L\in\Dbc(\Gmk,\QQ)$.

More precisely, let $X\subseteq \PP^1_{\bar k}\times\PP^1_{\bar k}\times\PP^1_{\bar k}$ be the Zariski closure of the set $\{(x,y,z)\in\Gmk^3|z=xy\}\subseteq\Gmk^3$. For every $t\in\PP^1(\bar k)$ let $I_t\subseteq\Gal(\bar k(t)^{sep}/\bar k(t))$ denote the inertia group at $t$, and let $\RR_t$ (respectively $\RR_t^w$) be the abelian category of finite dimensional continuous $\ell$-adic representations of $I_t$ (resp. the totally wild ones). We will construct bi-exact functors $\rho_{(s,t)}^{(u)}:\RR^\star_s\times\RR^\star_t\to\RR^\star_u$ for every $(s,t,u)\in X$ (where $\RR^\star_t=\RR^w_t$ for $t=0$ or $t=\infty$ and $\RR^\star_t=\RR_t$ otherwise) such that, for any semisimple perverse objects $K,L\in\Dbc(\Gmk,\QQ)$, the local monodromy of $K\ast_! L$ at $u\in\PP^1(\bar k)$ is given by
$$
\bigoplus_{(s,t,u)\in X}\rho_{(s,t)}^{(u)}(K_{(s)},L_{(t)})
$$
where $K_{(s)}$ and $L_{(t)}$ denote the local monodromies of $K$ at $s$ and of $L$ at $t$ respectively, in a sense that will be made precise later.

We now describe briefly the structure of the article. In the first section we review the main results about convolution and Fourier transform that we will make use of. In the second section we prove a conjecture by Katz which gives a precise description of the monodromy at infinity of the convolution of two objects which are smooth on $\Gmk$, tame at $0$ and totally wild at infinity. This result will be used later to deduce some properties of the functors $\rho_{(s,t)}^{(u)}$. The next two sections make up the core of the article, and in them we define the functors $\rho_{(s,t)}^{(u)}$ (for $u=\infty$ or $0$ and for $u\in\bar k^\star$ respectively) and prove the main theorems \ref{infinity} and \ref{finite}. In the last section we discuss what can be said about the tame part of the monodromy at $0$ and infinity.

\section{Review of convolution on $\GG_m$ and local Fourier transform}

In this section we will review the main definitions and results to be used throughout this article. Let $k$ be a finite field, $\bar k$ a fixed algebraic closure and $\Gmk$ the $1$-dimensional multiplicative torus over $\bar k$. Fix a prime $\ell$ different from the characteristic of $k$. Let $\Sh(\Gmk,\QQ)$ be the abelian category of constructible $\ell$-adic sheaves on $\Gmk$, and $\Dbc(\Gmk,\QQ)$ the corresponding derived category.

Given two objects $K,L\in\Dbc(\Gmk,\QQ)$, their \emph{convolution} is the object $$K\ast_!L=\R\mu_!(K\boxtimes L)\in\Dbc(\Gmk,\QQ),$$ 
where $\mu:\Gmk\times\Gmk\to\Gmk$ is the multiplication map. There is also a ``without supports'' variant
$$
K\ast_\star L=\R\mu_\star(K\boxtimes L).
$$
They are both associative and commutative triangulated bifunctors. The two types of convolution are interchanged by duality \cite[2.5]{katz1996rls}.

Let $S\subseteq\Dbc(\Gmk,\QQ)$ be the full subcategory consisting of objects whose cohomology sheaves are succesive extensions of Kummer sheaves $\LL_\chi$ for different finite order characters $\chi:\bar k^\star\to\QQ$ \cite[1.4-1.8]{deligne569application}. Then $S$ is a thick subcategory and an ideal under both types of convolution \cite[3.6.2, 3.6.4]{gabber1996faisceaux}, so each of them descends to a bifunctor on the quotient category $\Dbc(\Gmk,\QQ)/S$. Moreover, for every $K,L\in\Dbc(\Gmk,\QQ)$ the mapping cone of the natural ``forget supports'' map $K\ast_!L\to K\ast_\star L$ is in $S$ \cite[Proposition 3.6.4]{gabber1996faisceaux}. In particular both types of convolution define the same operation on $\Dbc(\Gmk,\QQ)/S$, which we will simply call convolution and denote by $K\ast L$.

Let $\PPP$ be the subcategory of $\Dbc(\Gmk,\QQ)/S$ consisting of (the objects isomorphic to) perverse objects, that is, objects $K$ such that $\HHH^i(K)=0$ for $i\neq -1,0$, $\HHH^0(K)$ is punctually supported and $\HHH^{-1}(K)$ does not have punctual sections. It is an abelian category, and by \cite[Proposition 3.6.4(iii)]{gabber1996faisceaux}, the convolution (in $\Dbc(\Gmk,\QQ)/S$) of two objects in $\PPP$ is in $\PPP$. Therefore, convolution defines a bi-exact associative and commutative functor $\PPP\times\PPP\to\PPP$. 

Let $K\in\Dbc(\Gmk,\QQ)$ be a perverse object, and consider the action of the monodromy group $I_0$ at $0$ on the generic stalk $V$ of $\HHH^{-1}(K)$. By \cite[Proposition 1.1, Lemma 1.8]{katz1988gauss}, there is a canonical decomposition $V=V^t\oplus V^w$ into its tame part (on which the wild inertia group $P_0\subseteq I_0$ acts trivially) and its wild part (on which $P_0$ acts with no invariants). Then $V^w$ does only depend on the class of $K$ in $\PPP$: If $K\cong L$ in $\PPP$, $K$ and $L$ can be joined by a chain of maps $K_n\to L_n$ whose mapping cones $M_n$ are in $S$. We have then exact sequences
$$ 
\HHH^{-1}(M_n)\to\HHH^{-1}(K_n)\to\HHH^{-1}(L_n)\to\HHH^0(M_n).
$$
But as representations of $I_0$ both $\HHH^{-1}(M_n)$ and $\HHH^{0}(M_n)$ are tame, so there is an isomorphism between the wild part of the monodromies at $0$ of $\HHH^{-1}(K_n)$ and $\HHH^{-1}(L_n)$.

For any $K\in\PPP$, we denote by $K_{(0)}^w$ the wild part $V^w$ of the action of $I_0$ on $\HHH^{-1}(K)$. Similarly, the wild part of the action of the inertia group $I_\infty$ at infinity on $\HHH^{-1}(K)$ only depends on the class of $K$ in $\PPP$, we will denote it by $K_{(\infty)}^w$.

We will say that an object $K\in\PPP$ is \emph{semisimple} if it is isomorphic in $\PPP$ to a semisimple perverse sheaf. By \cite[paragraph after Lemma 3.3]{katz2010mellin}, the convolution of two semisimple objects is semisimple. For $K\in\Dbc(\Gmk,\QQ)$ a semisimple perverse object and $t\in \bar k^\star$, consider the specialization map $K_t\to K_{\bar\eta}$, where $\bar\eta$ is a geometric generic point of $\Gmk$, $K_t$ and $K_{\bar\eta}$ are the stalks of $K$ at $t$ and $\bar\eta$ respectively and $I_t$ acts trivially on $K_t$. Let $C_t$ be its mapping cone viewed as an object of the derived category of representations of the inertia group $I_t$ at $t$. Since $\HHH^{-1}(K)$ does not have punctual sections and $\HHH^0(K)$ is punctual, the cohomology of $C_t$ is concentrated in degree $-1$. We define $K_{(t)}=\HHH^{-1}(C_t)$. We have an exact sequence of $I_t$-representations
$$
0\to \HHH^{-1}(K)_{\bar\eta}/\HHH^{-1}(K)_t \to K_{(t)} \to\HHH^0(K)_t\to 0
$$
where $\HHH^{-1}(K)_t$ is the stalk of $\HHH^{-1}(K)$ at $t$, which coincides with the $I_t$-invariant subspace of $\HHH^{-1}(K)_{\bar\eta}$ since $K$ is semisimple (so $\HHH^{-1}(K)$ is a direct sum of middle extensions).

It is clear that if $K$ and $L$ are semisimple perverse sheaves which are isomorphic in $\PPP$ then $K_{(t)}\cong L_{(t)}$ as representations of $I_t$ for every $t\in\bar k^\star$ (since $M_{(t)}=0$ if $M$ is an extension of Kummer objects). We can then define $K_{(t)}:=L_{(t)}$ for every semisimple $K\in\PPP$, where $L$ is any semisimple perverse sheaf isomorphic to $K$ in $\PPP$.  

Fix a non-trivial additive character $\psi:k\to\QQ^\star$. The Fourier transform $\FT^\psi$ with respect to $\psi$ is an autoequivalence of categories $\Dbc(\AAA^1_{\bar k},\QQ)\to\Dbc(\AAA^1_{\bar k},\QQ)$, whose inverse is the Fourier transform $\FT^{\bar\psi}$ with respect to the conjugate character. Let $j:\Gmk\to\AAA^1_{\bar k}$ be the inclusion. Given an object $K\in\Dbc(\Gmk,\QQ)$, we set $\FT^\psi(K):=j^\star\FT^\psi(j_!K)$. If $K\in S$ then $\FT^\psi(K)\in S$, since the Fourier transform of $j_!\QQ[1]$ restricted to $\Gmk$ is again constant, and the Fourier transform of an object of the form $j_!\LL_\chi[1]$ is an object of the same form \cite[Proposition 1.4.3.2]{laumon1987transformation}. Therefore $\FT^\psi$ descends to a functor in the quotient category $\Dbc(\Gmk,\QQ)/S$. In particular, since $\FT^\psi$ takes perverse objects to perverse objects \cite[Corollaire 2.1.5]{katz1985transformation}, it defines a functor $\FT^\psi:\PPP\to\PPP$. It is an auto-equivalence of categories with inverse $\FT^{\bar\psi}$.

Laumon's results relate the local monodromies of an object $K\in\PPP$ and its Fourier transform. The following theorem summarizes the results in our notation. Throughout this article we will refer to this result as Local Fourier transform theory (LFTT).

\begin{thm}\label{LFTT}\cite{laumon1987transformation,katz1988travaux}
 For every $t\in\PP^1(\bar k)$ let $\RR_t$ (respectively $\RR_t^w$) be the category of representations of the inertia group $I_t\subseteq\mathrm{Gal}(\bar k(x)^{sep}/\bar k(x))$ (resp. the category of totally wild representations of $I_t$). There exist exact functors $\FT_{(0,\infty)}:\RR_0\to\RR_\infty$, $\FT_{(\infty,\infty)}:\RR_\infty\to\RR_\infty$, $\FT_{(\infty,0)}:\RR_\infty\to\RR_0$, $\FT_{(t,\infty)}:\RR_t\to\RR_\infty$ and $\FT_{(\infty,t)}:\RR_\infty\to\RR_t$ for $t\in\bar k^\star$ such that for every semisimple $K\in\PPP$ we have
\begin{enumerate}
 \item $(\FT^\psi K)^w_{(\infty)}\cong\FT_{(0,\infty)}K^w_{(0)}\oplus\FT_{(\infty,\infty)}K^w_{(\infty)}\oplus\bigoplus_{t\in\bar k^\star}\FT_{(t,\infty)}K_{(t)}.$
 \item $(\FT^\psi K)^w_{(0)}\cong\FT_{(\infty,0)}K^w_{(\infty)}.$
 \item $(\FT^\psi K)_{(t)}\cong\FT_{(\infty,t)}K^w_{(\infty)}$ for $t\in\bar k^\star$.
\end{enumerate}
Moreover, these functors have the following properties:
\begin{enumerate}
 \item If $\FF\in\RR_0$ has a single slope $a\geq 0$ and dimension $n$, $\FT_{(0,\infty)}\FF$ has a single slope $\frac{a}{a+1}$ and dimension $(a+1)n$.
 \item If $\FF\in\RR_\infty$ has a single slope $a\geq 0$ and dimension $n$, $\FT_{(\infty,\infty)}\FF$ is $0$ if $a\leq 1$, and has a single slope $\frac{a}{a-1}$ and dimension $(a-1)n$ if $a>1$.
 \item If $\FF\in\RR_\infty$ has a single slope $a\geq 0$ and dimension $n$, $\FT_{(\infty,0)}\FF$ is $0$ if $a\geq 1$, and has a single slope $\frac{a}{1-a}$ and dimension $(1-a)n$ if $a<1$.
 \item For $t\in\bar k^\star$, if we identify $\RR_0$ and $\RR_t$ via the translation map $x\mapsto x+t$, we have $\FT_{(t,\infty)}\FF\cong\LL_{\psi_t}\otimes\FT_{(0,\infty)}\FF$ and $\FT_{(\infty,t)}\FF\cong\FT_{(\infty,0)}(\FF\otimes\LL_{\psi_{-t}})$ where $\LL_{\psi_t}\in\RR_\infty$ is the representation given by the Artin-Schreier sheaf associated to the character $\psi_t:k_0\to k_0$, $x\mapsto \psi(\Tr_{k_0/k}(tx))$ and $k_0=k(t)$. In particular, $\FT_{(t,\infty)}\FF$ has slope $1$ for any $\FF\in\RR_t$, and $\FT_{(\infty,t)}\FF=0$ for any $t\in\bar k^\star$ if $\FF$ does not have $1$ among its slopes.
\end{enumerate}

\begin{proof}
 Since all properties are preserved by taking direct sums, we may assume that $K$ is a simple perverse sheaf. Then $K$ is either punctual on degree $0$, or an irreducible middle extension on degree $-1$. In the latter case, if $\HHH^{-1}(K)$ is not of Artin-Schreier type then it is a Fourier sheaf in the sense of \cite[8.2]{katz1988gauss}, and the theorem is just a rewrite of the results in \cite[Th\'eor\`eme 2.4.3]{laumon1987transformation},\cite[Theorems 8-13]{katz1988travaux} for the functors defined in \cite[2.4.2.3]{laumon1987transformation}. If $\HHH^{-1}(K)$ is an Artin-Schreier sheaf $\LL_{\psi_t}$ then the Fourier transform of $K$ is punctual supported on $t$. If $K$ is punctual, $K=\delta_t[0]$ for some $t\in\bar k^\star$, and its Fourier transform is $\LL_{\psi_t}[1]$. So it only ramains to check that $\FT_{(t,\infty)}$ of the trivial character is the character of $I_\infty$ induced by the Artin-Schreier sheaf $\LL_{\psi_t}$ and, reciprocally, that $\FT_{(\infty,t)}$ of the character $\LL_{\psi_t}$ is the trivial character. This is just a consequence of property (4) above and the fact that $\FT_{(0,\infty)}$ and $\FT_{(\infty,0)}$ take the trivial character to the trivial character \cite[Proposition 2.5.3.1]{laumon1987transformation}. 
\end{proof}

\end{thm}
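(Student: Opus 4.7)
The strategy is to reduce the theorem to a direct invocation of Laumon's stationary-phase theorem, with some bookkeeping for the exceptional simple objects that stationary phase does not cover.

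First, since the statements are additive under direct sums and all functors involved are exact, it suffices to verify the theorem when $K$ is a simple perverse object. By the classification of simple perverse sheaves on $\Gmk$, such a $K$ is either (i) a punctual skyscraper $\delta_t[0]$ for some $t\in\bar k^\star$, or (ii) a middle extension $j_{!*}(\FF[1])$ for $j:U\hookrightarrow\Gmk$ an open immersion and $\FF$ an irreducible smooth $\QQ$-sheaf on $U$.

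In case (ii), further split on whether $\HHH^{-1}(K)$, pushed forward to $\AAA^1_{\bar k}$ by $j$, is of Artin--Schreier type or not. If it is not, then $K$ (extended by zero to $\AAA^1_{\bar k}$) is a Fourier sheaf in the sense of \cite[8.2]{katz1988gauss}. I would then take Laumon's formal microlocalization functors of \cite[2.4.2.3]{laumon1987transformation} as the definition of $\FT_{(0,\infty)}$, $\FT_{(\infty,\infty)}$, $\FT_{(\infty,0)}$, and $\FT_{(t,\infty)}$, $\FT_{(\infty,t)}$ for $t\in\bar k^\star$. With these definitions in place, the decompositions (1)--(3) are a direct transcription of Laumon's stationary-phase theorem \cite[Th\'eor\`eme 2.4.3]{laumon1987transformation}, and the slope/dimension properties (1)--(3) for the functors are exactly \cite[Theorems 8-13]{katz1988travaux}.

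The remaining simple objects not covered by stationary phase are the Artin--Schreier type middle extensions $\LL_{\psi_t}[1]$ (whose Fourier transform is the punctual $\delta_t[1]$) and the punctual objects $\delta_t[0]$ (whose Fourier transform is $\LL_{\psi_t}[1]$). These two classes are swapped by $\FT^\psi$, so the stated decomposition for them reduces to two compatibility assertions: $\FT_{(t,\infty)}$ of the trivial character of $I_t$ should equal $\LL_{\psi_t}$ viewed as a character of $I_\infty$, and $\FT_{(\infty,t)}$ of $\LL_{\psi_t}|_{I_\infty}$ should equal the trivial character of $I_t$. Both follow from property (4), which itself reduces to the standard identity $\FT^\psi(\tau_t^*\bullet)\cong \LL_{\psi_{-t}}\otimes\FT^\psi(\bullet)$ under translation $\tau_t$, combined with Laumon's fact that $\FT_{(0,\infty)}$ and $\FT_{(\infty,0)}$ both send the trivial character to the trivial character \cite[Proposition 2.5.3.1]{laumon1987transformation}.

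The main obstacle is not really located in this proof at all: it sits upstream, in the construction of the local Fourier transform functors by formal microlocalization and in the slope/dimension analysis of the resulting $I_\infty$- and $I_0$-representations, both of which I would quote wholesale from Laumon and Katz. The only genuine verification left to do here is the exceptional-case compatibility for Artin--Schreier sheaves and delta functions, and that is a short explicit computation using the known shape of $\FT^\psi(j_!\LL_{\psi_t}[1])$ and $\FT^\psi(\delta_t[0])$.
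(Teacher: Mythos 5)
Your proposal is correct and follows essentially the same route as the paper: reduce to simple perverse objects, invoke Laumon's stationary phase (Th\'eor\`eme 2.4.3) and Katz's slope computations for the Fourier-sheaf case, and handle the exceptional Artin--Schreier/punctual pair by the compatibility of $\FT_{(t,\infty)}$ and $\FT_{(\infty,t)}$ with the trivial character via property (4) and \cite[Proposition 2.5.3.1]{laumon1987transformation}. No substantive difference from the paper's argument.
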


\section{A conjecture of Katz}

We keep the notation and hypotheses of the previous section. Let $\mathcal C$ be the subcategory of $\PPP$ consisting of objects isomorphic to $\LL[1]$, where $\LL\in\Sh(\Gmk,\QQ)$ is smooth on $\GG_{m,\bar k}$, tamely ramified at $0$ and totally wildly ramified at infinity. By \cite[Theorem 5.1(1)]{katz1988gauss}, ${\mathcal C}$ is invariant under convolution. More precisely, if $\FF,\GGG\in\Sh(\Gmk,\QQ)$ are smooth, tame at $0$ and totally wild at infinity, then $\FF[1]\ast_!\GGG[1]\cong\FF[1]\ast_\star\GGG[1]\cong{\mathcal H}[1]$, where $\HHH\in\Sh(\Gmk,\QQ)$ has the same properties. By abuse of language, we will write $\FF\ast\GGG=\HHH$ in that case.

In \cite[7.6]{katz1988gauss}, N. Katz gives a conjectural formula for the slopes (and their multiplicities) of $\FF\ast\GGG$ at infinity in terms of the slopes and multiplicities of $\FF$ and $\GGG$ for $\FF[1],\GGG[1]\in{\mathcal C}$. One of its equivalent formulations is the following:

\begin{prop}\label{slopes}
 Let $K=\FF[1],L=\GGG[1]\in{\mathcal C}$ have single slopes $a>0$ and $b>0$ at infinity respectively. Then $K\ast L$ has a single slope $\frac{ab}{a+b}$ at infinity.
\end{prop}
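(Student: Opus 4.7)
The plan is to combine a global Euler-characteristic computation (which pins down the \emph{average} slope of $\HHH := \FF\ast\GGG$ at infinity) with a local stationary-phase analysis (which forces every slope to equal this average).

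\emph{Average slope.} Writing $n = \mathrm{rank}(\FF)$ and $m = \mathrm{rank}(\GGG)$, the Euler--Poincar\'e formula on $\Gmk$ gives $\chi(\Gmk,\FF) = -an$ and $\chi(\Gmk,\GGG) = -bm$. Since $\FF[1]\ast\GGG[1] = \HHH[1]$ in $\CC$, we have $\R\mu_!(\FF\boxtimes\GGG) = \HHH[-1]$, and the product formula for Euler characteristics of external products yields $\chi(\Gmk,\HHH) = -\chi(\FF)\chi(\GGG) = -abnm$; since $\HHH$ is tame at $0$, this forces $\Swan_\infty(\HHH) = abnm$. For the generic rank, the stalk $\HHH_t \cong \HH^1_c(\Gmk,\FF\otimes\iota_t^\star\GGG)$, with $\iota_t(x) = t/x$ exchanging $0$ and $\infty$, has integrand smooth on $\Gmk$ of rank $nm$, with single slope $a$ at $\infty$ and single slope $b$ at $0$; its Euler characteristic is therefore $-(a+b)nm$, giving $\mathrm{rank}(\HHH) = (a+b)nm$. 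Dividing, the average slope of $\HHH$ at $\infty$ equals $ab/(a+b)$.

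\emph{Single slope.} It suffices to bound each slope of $\HHH$ at $\infty$ above by $ab/(a+b)$, since the average already equals this value. To do so, I would reduce the local monodromy at $\infty$ to rank one by pulling back both $\FF$ and $\GGG$ along a sufficiently divisible Kummer cover $[N]\colon x\mapsto x^N$; the local representations at $\infty$ then decompose as direct sums of pieces of the form $\LL_\chi\otimes\LL_{\psi(P(x))}$ with $P$ a polynomial of degree $Na$ (respectively $Nb$). For each pair of rank-one summands the convolution stalk reduces to a Kloosterman-type character sum $\sum_x\psi(\alpha x^e + \beta(t/x)^f)$, whose phase has a unique critical point $x^{e+f}\propto t^f$ with critical value of order $t^{ef/(e+f)}$. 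By the stationary-phase principle each such rank-one convolution carries single slope $ef/(e+f)$, which descends (via Hasse--Arf) to $ab/(a+b)$ for the corresponding piece of $\HHH$. Combined with the average-slope identity, every slope of $\HHH$ at $\infty$ is forced to equal $ab/(a+b)$.

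\emph{Main obstacle.} The stationary-phase step is the technical heart and is delicate in positive characteristic: when $p\mid e+f$ the critical-point equation becomes inseparable, and one must perturb $P$ by carefully chosen lower-degree monomials so that the slope is governed by a separable equation without changing the leading behavior. A secondary technical point is the descent from the Kummer cover, which requires Hasse--Arf to transfer slopes back correctly and to verify that single-slope behavior on each piece of the pullback implies the same for $\HHH$ itself.
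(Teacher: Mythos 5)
Your first step (computing $\mathrm{rank}_\infty(\FF\ast\GGG)=mn(a+b)$ and $\Swan_\infty(\FF\ast\GGG)=mnab$, hence average slope $ab/(a+b)$) is correct and is exactly what the paper takes from Katz's Theorem 5.1(4,5), and your reduction of the proposition to the upper bound ``every slope $\leq ab/(a+b)$'' is also the paper's final step. The gap is in how you propose to prove that upper bound. First, a totally wild $I_\infty$-representation does not in general split into pieces $\LL_\chi\otimes\LL_{\psi(P(x))}$ after a tame Kummer pullback $[N]^\star$: since $\LL_{\psi(g^p)}\cong\LL_{\psi(g)}$, a character of the form $\LL_{\psi(P)}$ with $P$ a polynomial always has Swan conductor prime to $p$, so whenever $Na$ is divisible by $p$ (e.g.\ already for a rank-one $\FF$ with $a=p$, where no prime-to-$p$ choice of $N$ helps) the constituents are not of that shape; splitting them off requires a wildly ramified cover, after which the Hasse--Arf descent you invoke no longer transfers slopes in the simple way you need. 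Second, and more fundamentally, the stationary-phase step is precisely where the naive argument breaks in characteristic $p$: when $p\mid e+f$ the critical-point equation degenerates and the heuristic ``critical value of order $t^{ef/(e+f)}$'' gives nothing; ``perturbing $P$ by lower-degree monomials'' is a hope, not an argument, and the intractability of exactly this case is why Katz left the statement as a conjecture. Even when $p\nmid e+f$, passing from the shape of the character sum to the $I_\infty$-slopes of $\R^1\mu_!$ requires the vanishing-cycles/local-convolution machinery, which you have not supplied.

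The paper's route to the upper bound avoids all of this. Using the identity $\R\Gamma_c(\Gmk,(K\ast_!L)\otimes\LL_\psi)\cong\R\Gamma_c(\Gmk,K\otimes\FT^\psi L)[-1]$ and an Euler-characteristic count with the local Fourier transform, it shows that when $1<\frac1a+\frac1b<2$ the twist of $\FF\ast\GGG$ by a generic $\LL_{\psi(bt)}$ has Swan conductor at infinity equal to its rank, which by the break decomposition forces all slopes of $\FF\ast\GGG$ to be $\leq 1$ in that range. For general $a,b$ it then pulls back by $[r_n]^\star=[m]^\star[n]_\star$ for rationals $r_n$ of $p$-adic valuation $0$ increasing to $\frac1a+\frac1b$, applies the previous case to $[r_n]^\star\FF\ast[r_n]^\star\GGG$, and uses the injection $[r_n]^\star(\FF\ast\GGG)\hookrightarrow[r_n]^\star\FF\ast[r_n]^\star\GGG$ to conclude that all slopes are $\leq r_n^{-1}\to ab/(a+b)$. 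So while both arguments use Kummer pullbacks, the paper uses them only to rescale slopes globally (with denominators kept prime to $p$), never to decompose the local monodromy into rank-one pieces; to repair your outline you would need to replace the stationary-phase step by an argument of this kind.
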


We will prove the proposition via some technical lemmas.

\begin{lem}\label{lema1}
 Let $K,L,M\in\Dbc(\GG_{m,\bar k},\QQ)$.  Then 
$$
\R\Gamma_c(\GG_{m,\bar k},(K\ast_! L)\otimes M)\cong\R\Gamma_c(\GG_{m,\bar k},K\otimes((\istar L)\ast_! M))
$$
where $\iota:\Gmk\to\Gmk$ is the inversion map.
\end{lem}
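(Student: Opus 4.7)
The plan is to rewrite both sides as cohomology with compact support of a triple tensor product on $\Gmk\times\Gmk$ and then exhibit a change of variables that identifies the two integrands. Throughout, let $p_1,p_2:\Gmk\times\Gmk\to\Gmk$ be the two projections.

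First I would unwind the definition of $\ast_!$ and apply the projection formula together with the Leray spectral sequence $\R\Gamma_c(\Gmk,\R\mu_!(-))=\R\Gamma_c(\Gmk\times\Gmk,-)$. For the left-hand side, since $K\ast_! L=\R\mu_!(p_1^\star K\otimes p_2^\star L)$, the projection formula $\R\mu_!(A)\otimes M=\R\mu_!(A\otimes\mu^\star M)$ gives
$$
\R\Gamma_c(\Gmk,(K\ast_! L)\otimes M)\cong\R\Gamma_c(\Gmk\times\Gmk,p_1^\star K\otimes p_2^\star L\otimes\mu^\star M).
$$
An entirely analogous computation for the right-hand side yields
$$
\R\Gamma_c(\Gmk,K\otimes((\istar L)\ast_! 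M))\cong\R\Gamma_c(\Gmk\times\Gmk,\mu^\star K\otimes p_1^\star\istar L\otimes p_2^\star M).
$$

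Next I would define the automorphism $\varphi:\Gmk\times\Gmk\isomto\Gmk\times\Gmk$ by $\varphi(u,v)=(uv,u^{-1})$, with inverse $(x,y)\mapsto(y^{-1},xy)$. A direct computation of the compositions $p_1\circ\varphi$, $p_2\circ\varphi$ and $\mu\circ\varphi$ (namely $uv$, $u^{-1}=\iota\circ p_1$, and $v=p_2$ respectively) shows
$$
\varphi^\star(p_1^\star K\otimes p_2^\star L\otimes\mu^\star M)\cong\mu^\star K\otimes p_1^\star\istar L\otimes p_2^\star M.
$$
Since $\varphi$ is an isomorphism, pulling back along $\varphi$ preserves cohomology with compact support, so the two expressions above are isomorphic and the lemma follows.

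The proof is essentially formal once the change of variables is spotted; the only ``obstacle'' is guessing $\varphi$, which is forced by the requirement that the triple $(K\text{ at }x,L\text{ at }y,M\text{ at }xy)$ be matched with $(\istar L\text{ at }u,M\text{ at }v,K\text{ at }uv)$, i.e.\ $x=uv$, $y=u^{-1}$, $xy=v$. Everything else is an application of the projection formula and the functoriality of $\R\Gamma_c$ under isomorphisms.
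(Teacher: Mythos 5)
Your proof is correct and follows essentially the same route as the paper: projection formula on both sides to reduce to compactly supported cohomology of a triple tensor product on $\Gmk\times\Gmk$, then a monomial change of variables identifying the two integrands (your $\varphi(u,v)=(uv,u^{-1})$ is just a harmless variant of the paper's $(x,y)\mapsto(xy,y^{-1})$). No gaps.
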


\begin{proof}
 If $\mu:\GG_{m,\bar k}\times\GG_{m,\bar k}\to\GG_{m,\bar k}$ is the multiplication map, we have
\begin{align*}
&\R\Gamma_c(\GG_{m,\bar k},(K\ast_! L)\otimes M)=\R\Gamma_c(\GG_{m,\bar k},\R\mu_!(K\boxtimes L)\otimes M)=
\\
&=\R\Gamma_c(\GG_{m,\bar k},\R\mu_!((K\boxtimes L)\otimes\mu^\star M))
=\R\Gamma_c(\GG_{m,\bar k}\times\GG_{m,\bar k},(K\boxtimes L)\otimes\mu^\star M)
\end{align*}
by the projection formula. If $\pi_1,\pi_2:\GG_{m,\bar k}\times\GG_{m,\bar k}\to\GG_{m,\bar k}$ are the projections then 
$$
\R\Gamma_c(\GG_{m,\bar k}\times\GG_{m,\bar k},(K\boxtimes L)\otimes\mu^\star M)
=\R\Gamma_c(\GG_{m,\bar k}\times\GG_{m,\bar k},\pi_1^\star K\otimes \pi_2^\star L\otimes\mu^\star M).
$$
Consider the automorphism $\phi:\Gmk\times\Gmk\to\Gmk\times\Gmk$ given by $(x,y)\mapsto (xy,y^{-1})$. Then $\mu=\pi_1\circ\phi$, $\pi_1=\mu\circ\phi$ and $\iota\circ\pi_2=\pi_2\circ\phi$, where $\iota:\Gmk\to\Gmk$ is the inversion map. It follows that
\begin{align*}
&\R\Gamma_c(\GG_{m,\bar k}\times\GG_{m,\bar k},\pi_1^\star K\otimes \pi_2^\star L\otimes\mu^\star M)\cong
\\
&\cong\R\Gamma_c(\GG_{m,\bar k}\times\GG_{m,\bar k},\mu^\star K\otimes \pi_2^\star \istar L\otimes\pi_1^\star M)=\R\Gamma_c(\Gmk,K\otimes\R\mu_!((\istar L)\boxtimes M))=
\\
&=\R\Gamma_c(\Gmk,K\otimes((\istar L)\ast_! M)).
\end{align*}
\end{proof}

\begin{cor}\label{lema1cor}
  Let $K,L\in\Dbc(\GG_{m,\bar k},\QQ)$, $\psi:k\to\QQ^\star$ a non-trivial additive character and $\LL_{\psi}$ the corresponding Artin-Schreier sheaf. Then 
$$
\R\Gamma_c(\GG_{m,\bar k},(K\ast_! L)\otimes \LL_\psi)\cong\R\Gamma_c(\GG_{m,\bar k},K\otimes\FT^\psi L)[-1].
$$
\end{cor}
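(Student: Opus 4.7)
The plan is to derive the corollary directly from Lemma~\ref{lema1} by specializing $M=\LL_\psi$ and then identifying the resulting convolution with the Fourier transform of $L$. Setting $M=\LL_\psi$ in the lemma gives
$$
\R\Gamma_c(\Gmk,(K\ast_! L)\otimes \LL_\psi)\cong\R\Gamma_c(\Gmk,K\otimes((\istar L)\ast_! \LL_\psi)),
$$
so it suffices to exhibit a canonical isomorphism $(\istar L)\ast_! \LL_\psi\cong(\FT^\psi L)[-1]$ in $\Dbc(\Gmk,\QQ)$, after which the shift $[-1]$ can be pulled outside $\R\Gamma_c$.

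To establish this isomorphism I would reuse the change-of-variables trick employed in the proof of Lemma~\ref{lema1}. Let $\phi:\Gmk\times\Gmk\to\Gmk\times\Gmk$ be the automorphism $\phi(x,y)=(x^{-1},xy)$. A direct check gives $\mu\circ\phi=\pi_2$ and $\pi_1\circ\phi=\iota\circ\pi_1$, so $\phi^\star((\istar L)\boxtimes\LL_\psi)\cong\pi_1^\star L\otimes\mu^\star\LL_\psi$, and therefore
$$
(\istar L)\ast_! \LL_\psi = \R\mu_!((\istar L)\boxtimes\LL_\psi)\cong\R\pi_{2!}(\pi_1^\star L\otimes\mu^\star\LL_\psi).
$$

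On the other hand, unraveling the definition $\FT^\psi L=j^\star\FT^\psi(j_! L)$ on $\AAA^1_{\bar k}$ (where $\FT^\psi$ carries the customary shift $[1]$) and invoking proper base change — noting that $j_!L$ is supported on $\Gmk$, so the push-forward may be computed on $\Gmk\times\Gmk$ — gives
$$
\FT^\psi L\cong\R\pi_{2!}(\pi_1^\star L\otimes\mu^\star\LL_\psi)[1].
$$
Combining the two displayed identities yields $(\istar L)\ast_! \LL_\psi\cong(\FT^\psi L)[-1]$, and substituting back completes the proof.

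I do not anticipate any substantive obstacle: the corollary is essentially bookkeeping combining Lemma~\ref{lema1} with the definition of $\FT^\psi$. The only points requiring care are verifying the change-of-variables identity on $\Gmk\times\Gmk$ and tracking the cohomological shift $[-1]$ induced by the standard normalization of the $\ell$-adic Fourier transform.
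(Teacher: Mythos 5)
Your proof is correct and follows essentially the same route as the paper: specialize Lemma~\ref{lema1} to $M=\LL_\psi$ and invoke the identity $(\istar L)\ast_!\LL_\psi[1]\cong\FT^\psi L$. The only difference is that the paper cites this identity from Katz (\emph{ESA}, Proposition 8.1.12), whereas you verify it directly by the change-of-variables $(x,y)\mapsto(x^{-1},xy)$, which is a valid (and self-contained) substitute.
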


\begin{proof}
 This is immediate from the previous lemma and the formula \cite[Proposition 8.1.12]{katz1990esa}
$$
(\istar L)\ast_!\LL_\psi[1]\cong \FT^\psi L.
$$
\end{proof}

\begin{lem}\label{lema2}
 Let $K=\FF[1],L=\GGG[1]\in{\mathcal C}$ have single slopes $a>0$ and $b>0$ at infinity respectively. Suppose that $1<\frac{1}{a}+\frac{1}{b}<2$. Let $\psi:k\to\QQ^\star$ be a non-trivial character and $\LL_\psi$ the corresponding Artin-Schreier sheaf. Then the Swan conductor of $(K\ast L)\otimes\LL_\psi$ at infinity is equal to its rank. 
\end{lem}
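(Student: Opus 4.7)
The plan is to compute both the Swan conductor and the rank of the lisse sheaf $\mathcal{T}:=(\FF\ast\GGG)\otimes\LL_\psi$ on $\Gmk$ as global Euler-Poincar\'e invariants, and match them using Corollary~\ref{lema1cor} and Theorem~\ref{LFTT}. Since $\FF\ast\GGG$ is smooth on $\Gmk$ and tame at $0$, and $\LL_\psi$ is unramified at $0$, the sheaf $\mathcal{T}$ is tame at $0$, so Grothendieck-Ogg-Shafarevich gives $\Swan_\infty(\mathcal{T})=-\chi_c(\Gmk,\mathcal{T})$. Using Corollary~\ref{lema1cor} together with $(K\ast_!L)\otimes\LL_\psi=\mathcal{T}[1]$ and $K=\FF[1]$, a short shift bookkeeping produces the key identity
$$
\Swan_\infty(\mathcal{T})=\chi_c(\Gmk,\FF\otimes\FT^\psi L).
$$

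By commutativity of convolution I may assume $b\ne 1$, after exchanging $K\leftrightarrow L$ if necessary (the hypothesis $1/a+1/b<2$ forces $a>1$ whenever $b=1$, so in the exchanged version $b'=a>1$). Under $b\ne 1$, part (4) of Theorem~\ref{LFTT} gives $\FT_{(\infty,t)}L^w_{(\infty)}=0$ for every $t\in\bar k^\star$, so $\FT^\psi L$ is smooth on $\Gmk$; write $\FT^\psi L=\mathcal{N}[1]$ with $\mathcal{N}$ lisse on $\Gmk$. Its generic rank is computed from the stalk formula $(\FT^\psi L)_{y}\cong\R\Gamma_c(\Gmk,\GGG\otimes\LL_{\psi_y})[2]$, yielding $\max(b,1)\cdot m$ where $m=\mathrm{rank}(\GGG)$; parts (1)--(3) of LFTT describe the inertia representations explicitly: if $b>1$, $\mathcal{N}$ is tame at $0$ and at $\infty$ splits as a wild summand of slope $b/(b-1)$ and rank $(b-1)m$ plus a tame summand of rank $m$; if $b<1$, $\mathcal{N}$ is tame at $\infty$ and at $0$ splits as a wild summand of slope $b/(1-b)$ and rank $(1-b)m$ plus a tame summand of rank $bm$.

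The bound $1<1/a+1/b<2$ amounts to the strict inequality $a<b/(b-1)$ when $b>1$ and the analogous non-coincidence when $b<1$, so the single positive slope $a$ of $\FF$ never meets a slope of $\mathcal{N}$; hence $\FF\otimes\mathcal{N}$ decomposes into summands of distinct single slopes and a direct slope-by-slope calculation yields $\Swan_0(\FF\otimes\mathcal{N})+\Swan_\infty(\FF\otimes\mathcal{N})=(a+b)nm$, with $n=\mathrm{rank}(\FF)$. An analogous stalk computation of $\FF[1]\ast_!\GGG[1]$ at a geometric generic point $\bar\eta\in\Gmk$, using the identification of the fibre of the multiplication map with $\Gmk$ and pulling back $\GGG$ along $x\mapsto\bar\eta/x$, gives $\mathrm{rank}(\FF\ast\GGG)=(a+b)nm$ as well, completing the proof. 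The main obstacle is the slope bookkeeping for $\FT^\psi L$: LFTT pins down only the wild pieces of its local monodromies, so the tame pieces have to be recovered from the independent generic-rank computation, and the full strength of the hypothesis $1<1/a+1/b<2$ is precisely what excludes any slope collision in the tensor product $\FF\otimes\mathcal{N}$.
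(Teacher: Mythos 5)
Your proof is correct and follows essentially the same route as the paper's: Ogg--Shafarevich plus Corollary \ref{lema1cor} to convert the Swan conductor into the Euler characteristic of $\FF\otimes\FT^\psi L$, then LFTT slope bookkeeping, with the hypothesis $1<\frac1a+\frac1b$ ruling out the slope collision $a=\frac{b}{b-1}$. The only (harmless) deviations are that you reduce to $b\neq 1$ and treat the $b<1$ branch directly where the paper reduces to $b>1$, and that you rederive the rank $mn(a+b)$ of $\FF\ast\GGG$ rather than citing \cite[Theorem 5.1(4)]{katz1988gauss}.
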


\begin{proof}
Let $m,n$ be the ranks of $\FF$ and $\GGG$ respectively. According to \cite[Theorem 5.1(4)]{katz1988gauss} the rank of $\FF\ast\GGG$ (and therefore also of $(\FF\ast\GGG)\otimes\LL_\psi$) is $mn(a+b)$, so we need to show that the Swan conductor at infinity of $(\FF\ast\GGG)\otimes\LL_\psi$ is also $mn(a+b)$.

 Since $(\FF\ast\GGG)\otimes\LL_\psi[1]\in{\mathcal C}$ is smooth on $\GG_{m,\bar k}$ and tame at $0$, by the Ogg-Shafarevic formula we have $\Swan_\infty((\FF\ast\GGG)\otimes\LL_\psi)=-\chi(\GG_{m,\bar k},(\FF\ast\GGG)\otimes\LL_\psi)$. But by corollary \ref{lema1cor} the Euler characteristics of $(\FF\ast\GGG)\otimes\LL_\psi=\HHH^{-1}((K\ast_! L)\otimes\LL_\psi)$ and $K\otimes\FT^\psi L$ are equal.

Since $\frac{1}{a}+\frac{1}{b}<2$, at least one of $a,b$ is greater than $1$. By the commutativity of the convolution, we may assume without loss of generality that $b>1$. Then $\GGG$ has no Artin-Schreier components, so $\FT^\psi L=\HHH[1]$ for a middle extension sheaf $\HHH\in\Sh(\GG_{m,\bar k},\QQ)$. Since $\GGG$ has slope $b$ with multiplicity $n$ at infinity, LFTT tells us that $\HHH$ is smooth on $\GG_{m,\bar k}$, tame at $0$, and the wild part of its local monodromy at infinity has $\frac{b}{b-1}$ as its only slope, with multiplicity $n(b-1)$. On the other hand, the rank of $\HHH$ is minus the Euler characteristic of $\GGG\otimes\LL_{\psi}$. Since $\LL_\psi$ has slope $1$ at infinity and $b>1$, $\GGG\otimes\LL_{\psi}$ has a single slope $b$ at infinity and therefore its Euler characteristic is $-nb$ by the Ogg-Shafarevic formula. We conclude that $\HHH$ has rank $nb$, and in particular its tame part at infinity has dimension $nb-n(b-1)=n$.

Then $\FF\otimes\HHH$ is smooth on $\GG_{m,\bar k}$ and tame at $0$, so its Euler characteristic is minus its Swan conductor at infinity. Since $\FF$ has a single slope $a$ with multiplicity $m$ and $\HHH$ has slope $\frac{b}{b-1}>a$ with multiplicity $n(b-1)$ and slope $0$ with multiplicity $n$, we conclude that $\FF\otimes\HHH$ has slope $\frac{b}{b-1}$ with multiplicity $mn(b-1)$ and slope $a$ with multiplicity $mn$ \cite[Lemma 1.3]{katz1988gauss}. Its Swan conductor is then $mn(b-1)\frac{b}{b-1}+mna=mn(a+b)$.
\end{proof}

\begin{lem}\label{lema3}
 Let $K=\FF[1],L=\GGG[1]\in{\mathcal C}$ have single slopes $a>0$ and $b>0$ at infinity respectively. Suppose that $1<\frac{1}{a}+\frac{1}{b}<2$. Then all slopes of $K\ast L$ at infinity are $\leq 1$.
\end{lem}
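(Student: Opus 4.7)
The key is to extract slope information from Lemma \ref{lema2}. The plan is to decompose the action of $I_\infty$ on $K\ast L$ into its slope-graded pieces $V_{c_i}$ of dimensions $r_i$, so that $\sum_i r_i$ equals the rank $mn(a+b)$ of $K\ast L$ at infinity (\cite[Theorem 5.1(4)]{katz1988gauss}). Since the assertion that all slopes are $\leq 1$ is equivalent to $\sum_{c_i>1}(c_i-1)r_i=0$, this is what I will target.

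First, I would compute $\Swan_\infty((K\ast L)\otimes\LL_\psi)$ piece by piece. Tensoring with the slope-$1$ sheaf $\LL_\psi$ preserves slopes $>1$, lifts slopes $<1$ to $1$, and sends the slope-$1$ piece $V_1$ to a representation of slopes $\leq 1$. Equating the sum of these contributions with the rank, via Lemma \ref{lema2}, yields the identity
\begin{equation*}
\sum_{c_i>1}(c_i-1)\,r_i \;=\; r_1 - \Swan_\infty(V_1\otimes\LL_\psi).
\end{equation*}

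Next, I would replace $\LL_\psi$ by a suitably chosen non-trivial Artin-Schreier sheaf $\LL_{\psi_t}$ with $t\in\bar k^\star$, to force the right-hand side to vanish. The $P_\infty$-restriction of $V_1$ decomposes as a sum of finitely many break-$1$ characters, each of the form arising from $\LL_{\psi_{t_j}}$ for some $t_j\in\bar k^\star$. For any $t$ outside the finite set $\{-t_j\}$, all characters of $V_1\otimes\LL_{\psi_t}|_{P_\infty}$ remain non-trivial; hence $V_1\otimes\LL_{\psi_t}$ still has all slopes equal to $1$ and its Swan conductor at infinity equals its rank $r_1$. Substituting in the identity forces $\sum_{c_i>1}(c_i-1)r_i=0$, yielding the claim.

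The main technical obstacle is ensuring that Lemma \ref{lema2} is available for an arbitrary non-trivial Artin-Schreier sheaf $\LL_{\psi_t}$ on $\GG_{m,\bar k}$, and not merely for those coming from characters of the original $k$. This is handled by base change: any such sheaf descends to some finite extension $k'/k$ over which Lemma \ref{lema2} applies verbatim, and the conclusion Swan$=$rank is purely geometric, so it is unaffected by the choice of descent.
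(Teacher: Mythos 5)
Your proof is correct and follows essentially the same route as the paper: twist by a generic Artin--Schreier sheaf $\LL_{\psi_t}$ so that the slope-$1$ piece keeps all its breaks equal to $1$, apply Lemma \ref{lema2} to get Swan $=$ rank, and conclude that the slope-$>1$ part must vanish; the only difference is that you write the bookkeeping as an explicit identity $\sum_{c_i>1}(c_i-1)r_i=r_1-\Swan_\infty(V_1\otimes\LL_{\psi_t})$ while the paper phrases it as an inequality with equality iff $M_{>1}=0$. One small imprecision: the $P_\infty$-restriction of $V_1$ need not be a sum of \emph{characters} (break-$1$ irreducibles can have dimension $>1$), but the fact you actually need --- that for all but finitely many $t$ the twist $V_1\otimes\LL_{\psi_t}$ retains all breaks equal to $1$ --- is exactly \cite[Lemma 8.5.7]{katz1988gauss}, which is what the paper invokes.
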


\begin{proof}
 Let $M$ be the generic stalk of $\FF\ast\GGG$ as a representation of $P_\infty$, the wild inertia group at infinity. Then we have a decomposition \cite[Proposition 1.1]{katz1988gauss}
$$
M\cong M_{< 1}\oplus M_{=1}\oplus M_{> 1}
$$
where $M_{< 1}$ (respectively $M_{=1}$, $M_{> 1}$) has all slopes $< 1$ (resp. $=1$, $> 1$). Fix a non-trivial character $\psi:k\to\QQ^\star$. By \cite[Lemma 8.5.7]{katz1988gauss}, for every $P_\infty$-irreducible subspace $N$ of $M_{=1}$ there is exactly one $a\in\bar k^\star$ such that $N\otimes\LL_{\psi(bt)}$ has all slopes equal to $1$ for every $b\neq a$, where $\LL_{\psi(bt)}$ is the pull-back of the Artin-Schreier sheaf $\LL_\psi$ by the multiplication by $b$ map. Therefore, for all but finitely many $b\in \bar k$ the $P_\infty$-representation $M_{=1}\otimes\LL_{\psi(bt)}$ has all its slopes equal to $1$. Then for all such $b$
$$
M\otimes\LL_{\psi(bt)}=(M_{< 1}\otimes\LL_{\psi(bt)})\oplus(M_{=1}\otimes\LL_{\psi(bt)})\oplus(M_{> 1}\otimes\LL_{\psi(bt)})
$$
has all slopes $\geq 1$, and all of them equal to $1$ if and only if $M_{> 1}=0$ \cite[Lemma 1.3]{katz1988gauss}. In particular, the Swan conductor of $(\FF\ast\GGG)\otimes\LL_{\psi(bt)}$ at infinity is greater than or equal to its rank, with equality if and only if $M_{> 1}=0$. We conclude by lemma \ref{lema2} applied to the character $\psi(bt)$ (extending scalars to a finite extension of $k$ if necessary). 
\end{proof}

\begin{lem}\label{lema4}
 Let $K=\FF[1],L=\GGG[1]\in{\mathcal C}$ have single slopes $a>0$ and $b>0$ at infinity respectively. Then all slopes of $K\ast L$ at infinity are $\leq \frac{ab}{a+b}$.
\end{lem}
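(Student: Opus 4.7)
The plan is to reduce to Lemma \ref{lema3} by Kummer pullback along the $n$-th power map $[n]:\Gmk\to\Gmk$ for integers $n\geq 1$ coprime to $p$. Pullback by $[n]$ multiplies slopes at infinity by $n$, and the commutative square with vertical maps $[n]\times[n]$ and $[n]$ and horizontal maps $\mu$ exhibits $\Gmk\times\Gmk$ as an $n$-fold \'etale cover of the corresponding fibre product, so that a proper base change computation yields $[n]^\star K\ast[n]^\star L\cong([n]^\star(K\ast L))^{\oplus n}$. In particular the sets of slopes at infinity of $[n]^\star K\ast[n]^\star L$ and of $[n]^\star(K\ast L)$ coincide.

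For $n$ coprime to $p$ satisfying $(a+b)/(2ab)<n<(a+b)/(ab)$, the pair $([n]^\star K,[n]^\star L)$ satisfies $1<1/(na)+1/(nb)<2$, so by Lemma \ref{lema3} all slopes at infinity of $[n]^\star K\ast[n]^\star L$, hence of $[n]^\star(K\ast L)$, are $\leq 1$; dividing by $n$, all slopes of $K\ast L$ at infinity are $\leq 1/n$. The sharp bound $c:=ab/(a+b)$ is then obtained by combining with the average-slope identity: by K\"unneth $\chi(\Gmk,K\ast L)=\chi(\Gmk,K)\chi(\Gmk,L)=abmn$, and since $\FF\ast\GGG$ is smooth on $\Gmk$ and tame at $0$, the Ogg--Shafarevic formula gives $\Swan_\infty(\FF\ast\GGG)=abmn$; combined with $\mathrm{rank}(\FF\ast\GGG)=(a+b)mn$ from \cite[Theorem 5.1(4)]{katz1988gauss}, the average slope at infinity is exactly $c$. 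A slope-decomposition argument on $[n]^\star(K\ast L)\otimes\LL_{\psi(bt)}$ for generic $b$, following the template of Lemma \ref{lema3}, together with the average identity, refines the upper bound from $1/n$ to $c$.

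The main obstacle will be the regime $1/a+1/b\leq 1$ (equivalently $c\geq 1$), where no integer $n$ lies in the required window and the pullback approach fails outright. There one must argue directly via Corollary \ref{lema1cor}, which gives $\Swan_\infty((K\ast L)\otimes\LL_\psi)=\Swan_\infty(\FF\otimes\HHH')$ with $\HHH'=\mathcal H^{-1}(\FT^\psi L)$; the LFTT description of $\HHH'$ from Theorem \ref{LFTT} then yields $\Swan_\infty((K\ast L)\otimes\LL_\psi)=abmn=c\cdot\mathrm{rank}(\FF\ast\GGG)$. A generic-twist analysis analogous to Lemma \ref{lema3}, iterated using higher-slope Artin--Schreier twists of the form $\LL_{\psi(bt^r)}$, then excludes slopes strictly below $1$ and above $c$, and combined with the average identity forces every slope of $\FF\ast\GGG$ at infinity to be $\leq c$.
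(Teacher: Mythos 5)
Your overall strategy --- pull back along power maps so as to land in the window where Lemma \ref{lema3} applies --- is the right one, but restricting to \emph{integer} pullbacks $[n]^\star$ breaks the argument at the decisive step. Even when the window $\left(\tfrac{1}{2}(\tfrac1a+\tfrac1b),\tfrac1a+\tfrac1b\right)$ contains an integer $n$ prime to $p$, what you obtain is only that all slopes of $K\ast L$ are $\leq 1/n$, and $1/n$ is in general strictly larger than $c=\tfrac{ab}{a+b}$. Your proposed repair --- combining this with the average-slope identity $\Swan_\infty(\FF\ast\GGG)=abmn$ to ``refine'' the bound down to $c$ --- is not a valid deduction: knowing that the slopes average to $c$ and are all $\leq 1/n$ does not prevent some of them from lying in $(c,1/n]$. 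Indeed the entire point of this lemma is to supply the upper bound $\leq c$ on each slope so that, \emph{together} with the average identity, Proposition \ref{slopes} can force them all to equal $c$; the average identity cannot be used to manufacture the upper bound. The generic Artin--Schreier twist technique of Lemma \ref{lema3} detects only whether slopes exceed $1$ (because $\LL_\psi$ has slope exactly $1$), so it cannot see the threshold at $c$; likewise your fallback for the regime $\tfrac1a+\tfrac1b\leq 1$ via higher twists $\LL_{\psi(bt^r)}$ can at best detect integer thresholds and cannot yield the generally non-integral bound $c$.

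The missing idea is to pull back along \emph{rational} powers: for $r=m/n$ in lowest terms with $m,n$ prime to $p$ one sets $[r]^\star\HHH:=[m]^\star[n]_\star\HHH$; this operation preserves $\mathcal C$ and multiplies slopes at infinity by $r$ \cite[1.13.1, 1.13.2, Lemma 5.0.1(5)]{katz1988gauss}, and such $r$ are dense in any interval. Choosing a sequence $r_j$ in the window with $r_j\to\tfrac1a+\tfrac1b$, Lemma \ref{lema3} applies to $[r_j]^\star\FF$ and $[r_j]^\star\GGG$, the injection $[r_j]^\star(\FF\ast\GGG)\hookrightarrow[r_j]^\star\FF\ast[r_j]^\star\GGG$ of \cite[Theorem 5.1(10,12)]{katz1988gauss} transfers the bound, and one gets that all slopes of $\FF\ast\GGG$ are $\leq r_j^{-1}$ for every $j$; letting $j\to\infty$ gives exactly $\leq\tfrac{ab}{a+b}$, with no case distinction and no appeal to the average identity. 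A smaller point: your claimed isomorphism $[n]^\star K\ast[n]^\star L\cong([n]^\star(K\ast L))^{\oplus n}$ is false. The cover $\Gmk\times\Gmk\to\Gmk\times_{[n],\Gmk,\mu}(\Gmk\times\Gmk)$ is a $\mu_n$-torsor, so the left-hand side decomposes as a direct sum over the characters of $\mu_n$ of \emph{twisted} convolutions, of which only the trivial-character summand is $[n]^\star(K\ast L)$; for the slope bound one only needs that $[n]^\star(K\ast L)$ is a direct factor, so this is not fatal, but it should be stated as an injection rather than an isomorphism.
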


\begin{proof}
 Let $(r_n)_{n\geq 1}$ be a sequence of rational numbers such that
\begin{enumerate}
 \item $\frac{1}{2}\left(\frac{1}{a}+\frac{1}{b}\right)<r_n<\frac{1}{a}+\frac{1}{b}$ for every $n\geq 1$.
 \item $r_n$ has $p$-adic valuation $0$ for every $n\geq 1$.
 \item $r_n\to\frac{1}{a}+\frac{1}{b}$ as $n\to\infty$.
\end{enumerate}

If $r=\frac{m}{n}$ is a rational number with $m,n\geq 1$ relatively prime and prime to $p$, for every $\QQ$-sheaf $\HHH$ on $\GG_{m,\bar k}$ we denote by $[r]^\star\HHH$ the sheaf $[m]^\star[n]_\star\HHH$, where $[m]$ and $[n]:\Gmk\to\Gmk$ are the $m$-th and $n$-th power maps respectively. By \cite[1.13.1, 1.13.2]{katz1988gauss} for every $n\geq 1$ the sheaves $[r_n]^\star\FF$ and $[r_n]^\star\GGG$ have single slopes $r_na$ and $r_nb$ at infinity respectively, and they are still in $\mathcal C$ \cite[Lemma 5.0.1(5)]{katz1988gauss}. Since $1<\frac{1}{r_na}+\frac{1}{r_nb}<2$ by hypothesis, we can apply lemma \ref{lema3} to them, and we deduce that $[r_n]^\star\FF\ast[r_n]^\star\GGG$ has all slopes $\leq 1$. 

Now by \cite[Theorem 5.1(10,12)]{katz1988gauss} there is an injection $[r_n]^\star(\FF\ast\GGG)\hookrightarrow[r_n]^\star\FF\ast[r_n]^\star\GGG$, so $[r_n]^\star(\FF\ast\GGG)$ has all slopes $\leq 1$ and therefore $\FF\ast\GGG$ has all slopes $\leq r_n^{-1}$ \cite[1.13.1, 1.13.2]{katz1988gauss}. We conclude by taking $n\to\infty$.
\end{proof}

\begin{proof}[Proof of proposition \ref{slopes}]
 Let $m,n$ be the ranks of $\FF$ and $\GGG$ respectively. By \cite[Theorem 5.1(4,5)]{katz1988gauss} $\FF\ast\GGG$ has rank $mn(a+b)$ and Swan conductor $mnab$ at infinity. Since all $mn(a+b)$ slopes at infinity are $\leq \frac{ab}{a+b}$ by lemma \ref{lema4} and they add up to $mnab$, thay must all be equal to $\frac{ab}{a+b}$.
\end{proof}

\begin{rem}\emph{
 By \cite[Remark 7.22]{rl2010}, this result implies the following estimate for exponential sums associated to homothety invariant polynomials: Let $g\in k[x]$ be a polynomial of degree $d$ prime to $p$ which is not of the form $h(x^n)$ for any $n\geq 2$, and let $e$ a positive integer that divides $q-1$. Then for every $r\geq 1$ we have the estimate:
$$
\left|\sum_{x\in k_r^\star}\psi(\Tr_{k_r/k}(g(x^{\frac{q-1}{e}})))\right|\leq C_{d,r}(q-1)q^{\frac{r-1}{2}}
$$
where $k_r$ is the extension of $k$ of degree $r$ in $\bar k$ and 
$$
C_{d,r}=\frac{r}{d}\sum_{i=0}^{r-1}\binom{d+r-i-1}{r}\binom{r-1}{i}.
$$}
\end{rem}

\section{Local monodromy at infinity of a convolution}

In this section we will show that the wild part of the local monodromies at $0$ and $\infty$ of $K\ast L$ is determined by the local monodromies of $K$ and $L$. For $K\in\PPP$, we denote by $S(K)$ the closed subset of $\bar k^\star$ on which $K$ is not smooth, that is, the set of points $t\in\bar k^\star$ such that $K_{(t)}\neq 0$.

\begin{thm}\label{infinity}
 There exist bi-exact functors $\rho_{(0,\infty)}:\RR^w_0\times\RR^w_\infty\to\RR^w_\infty$, $\rho_{(\infty,\infty)}:\RR^w_\infty\times\RR^w_\infty\to\RR^w_\infty$ and $\rho_{(t,\infty)}:\RR_t\times\RR^w_\infty\to\RR^w_\infty$ for $t\in\bar k^\star$ such that, if we define $\rho_{(\infty,0)}(\FF,\GGG):=\rho_{(0,\infty)}(\GGG,\FF)$ and $\rho_{(\infty,t)}(\FF,\GGG):=\rho_{(t,\infty)}(\GGG,\FF)$, for every $K,L\in{\mathcal P}$ there is an isomorphism of $I_\infty$-representations
\begin{align*}
(K\ast L)_{(\infty)}^{w}&\cong\rho_{(\infty,\infty)}(K_{(\infty)}^w,L_{(\infty)}^w)\oplus\rho_{(0,\infty)}(K_{(0)}^w,L_{(\infty)}^w)\oplus\rho_{(\infty,0)}(K_{(\infty)}^w,L_{(0)}^w)\oplus
\\
&\oplus\left(\bigoplus_{s\in S(K)}\rho_{(s,\infty)}(K_{(s)},L_{(\infty)}^w)\right)\oplus\left(\bigoplus_{t\in S(L)}\rho_{(\infty,t)}(K_{(\infty)}^w,L_{(t)})\right).
\end{align*}
\end{thm}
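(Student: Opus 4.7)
The plan is to define the functors $\rho_{(s,t)}^{(\infty)}$ via a chain of reductions to Katz's theorem on convolutions of objects in $\mathcal{C}$ (which handles the $(\infty,\infty)$ case for totally wild representations), using LFTT (Theorem \ref{LFTT}) and the group-theoretic structure of $\Gmk$ to transport local data at other points of $\PP^1$ to the point $\infty$.

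First I would define the functors. The functor $\rho_{(\infty,\infty)}$ is given directly by Katz's explicit formula from \cite[Chapters 6-7]{katz1988gauss}: given $V_1,V_2\in\RR^w_\infty$, realize them as $K^w_{(\infty)}, L^w_{(\infty)}$ for some $K,L\in\mathcal{C}$ and set $\rho_{(\infty,\infty)}(V_1,V_2):=(K\ast L)^w_{(\infty)}$; Proposition \ref{slopes} controls the slopes and hence ensures one stays in $\RR^w_\infty$. For $\rho_{(0,\infty)}$ and $\rho_{(\infty,0)}$, use the inversion automorphism $\iota(x)=x^{-1}$ of $\Gmk$: since $\iota$ is a group homomorphism we have $\istar(K\ast L)\cong(\istar K)\ast(\istar L)$, and $\istar$ interchanges the roles of $0$ and $\infty$, converting wild monodromy at $0$ into wild monodromy at $\infty$, which reduces us to the $(\infty,\infty)$ case. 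For $\rho_{(s,\infty)}$ with $s\in\bar k^\star$, use Corollary \ref{lema1cor} to re-express convolutions involving a sheaf ramified at $s$ in terms of Fourier transforms on $\AAA^1_{\bar k}$, then apply LFTT part (4) (covering the Artin-Schreier twists implicit in $\FT_{(s,\infty)}$) to reduce again to a case handled by the previous constructions.

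Second, I would verify the decomposition formula by checking it on ``pure'' test objects: semisimple perverse sheaves $K^{(s)}\in\PPP$ whose only nontrivial local data is at a single point $s\in\PP^1(\bar k)$ (realizable as a middle extension of a local system on $\Gmk\setminus\{s\}$ with prescribed monodromy at $s$, or as an object of $\mathcal{C}$ respectively $\istar\mathcal{C}$ for $s=\infty$, $0$). For each pair $(s,t)\in\PP^1(\bar k)^2$ one computes $(K^{(s)}\ast L^{(t)})^w_{(\infty)}$: when $st=\infty$ in $\PP^1$ (i.e., at least one of $s,t$ is $\infty$), this yields the defining formula for $\rho_{(s,t)}^{(\infty)}$; when $st\in\bar k^\star$, the cross-term vanishes because $K^{(s)}$ and $L^{(t)}$ are both tame at $\infty$, so the convolution inherits at most tame monodromy there (using the slope control in the spirit of Proposition \ref{slopes}). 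For general semisimple $K,L\in\PPP$, one then decomposes their local data into pieces realized by pure objects and combines the contributions via bi-exactness, which must also be verified for the defined functors using the exactness of the Fourier transforms in LFTT.

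The main obstacle will be showing the independence of $\rho_{(s,t)}^{(\infty)}$ from the particular choice of ``pure'' realization: two different semisimple sheaves with identical local data at $s$ but different global structure must yield the same $(K^{(s)}\ast L^{(t)})^w_{(\infty)}$. This amounts to proving that any modification of $K^{(s)}$ which does not alter its local monodromy at $s$ contributes only tame monodromy at $\infty$ under convolution with $L^{(t)}$, and requires careful slope analysis. The case of $\rho_{(s,\infty)}$ for $s\in\bar k^\star$ is the most delicate, as the required Fourier-type manipulations via Corollary \ref{lema1cor} simultaneously introduce Artin-Schreier twists and generate new wild slopes at $\infty$, whose control goes beyond Katz's original $\mathcal{C}$-setting and necessitates the sharpened slope results established in the preceding section.
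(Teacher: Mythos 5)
Your proposal has two genuine gaps. First, the reduction of $\rho_{(0,\infty)}$ to $\rho_{(\infty,\infty)}$ via the inversion $\iota$ does not work: since $\istar(K\ast L)\cong(\istar K)\ast(\istar L)$, applying $\iota$ swaps $0$ and $\infty$ for \emph{both} factors \emph{and} for the target point simultaneously. So if $K$ is wild at $0$ and $L$ is wild at $\infty$, you are led to compute the monodromy at $0$ of $(\istar K)\ast(\istar L)$ with $\istar K$ wild at $\infty$ and $\istar L$ wild at $0$ --- the mirror image of the original problem (this symmetry is exactly how Theorem \ref{zero} is deduced from Theorem \ref{infinity}), not a reduction to the $(\infty,\infty)$ case. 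The paper instead defines $\rho_{(0,\infty)}$ through the functor $\Phi(K)=K\ast j^\star\LL_\psi[1]$ (i.e.\ Fourier transform of the inversion) combined with LFTT, and this forces a nontrivial case analysis on the slope $a$ of $\FF\in\RR_0^w$: slope $>1$ goes through $\FT_{(\infty,\infty)}$, slope $=1$ requires the canonical Artin--Schreier decomposition $\FF\cong\bigoplus\FF_j\otimes\istar\LL_{\psi_{t_j}}$ of \cite[Lemma 8.5.7]{katz1988gauss} and routes through the finite-point functors $\rho_{(t_j,\infty)}$, and slope $<1$ is handled by recursion on $\lceil a^{-1}\rceil$. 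None of this is visible in your outline.

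Second, your verification strategy is circular. A general semisimple perverse object cannot be split into ``pure'' pieces each carrying local data at a single point: an irreducible middle extension may simultaneously be wild at $0$, wild at $\infty$, and singular at several finite points. The assertion that $(K\ast L)^w_{(\infty)}$ depends only on the local data of $K$ and $L$, and additively so, is precisely the content of the theorem; you cannot invoke it to reduce to pure test objects. The well-definedness issue you flag as ``the main obstacle'' is real but is not the only missing step --- even granting it, the passage from pure objects to arbitrary $K,L$ does not follow from bi-exactness of the $\rho$'s, which acts on local representations rather than on perverse sheaves. The paper closes this gap by an actual induction on the smallest $m+n$ with $\Phi^m(K),\Phi^n(L)\in\mathcal T$: the base case is Lemma \ref{inT}, a vanishing-cycles computation on the compactification $XY=tZ^2$ extending Katz's Theorem 6.5 from $\mathcal C$ to all of $\mathcal T$, and the inductive step rewrites $K\ast L\cong\Psi(\Phi(K)\ast L)$ and pushes $\istar\FT_{(0,\infty)}^{-1}$ through every term of the induction hypothesis using the cancellation lemmas \ref{cancel}--\ref{cancel3} and the slope computations of Propositions \ref{slopes1}--\ref{slopes3}. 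You would need to supply an argument of comparable strength for arbitrary $K,L$; as written, the proposal establishes at best the definition of the functors on representations realizable by pure objects.
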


Fix a non-trivial additive character $\psi:k\to\QQ^\star$. The key result that we will use to construct the local convolution functors is the following compatibility between convolution and Fourier transform with respect to $\psi$ \cite[Proposition 8.1.12]{katz1990esa}:
\begin{equation}\label{FT}
K\ast_! j^\star\LL_\psi[1]\cong j^\star\FT^\psi(j_!\istar K) 
\end{equation}
where $\iota:\GG_{m,\bar k}\to\GG_{m,\bar k}$ is the inversion map $t\mapsto t^{-1}$ and $j:\GG_{m,\bar k}\hookrightarrow \AAA^1_{\bar k}$ is the inclusion. If we denote by $\Phi$ the functor $K\mapsto K\ast j^\star\LL_\psi[1]$, it is clear by associativity that $\Phi^a(K)\ast\Phi^b(L)=\Phi^{a+b}(K\ast L)$ for every $a,b\geq 0$.

\begin{lem}\label{equivalence}
 The functor $\Phi:\PPP\to\PPP$ is an equivalence of categories, with quasi-inverse $\Psi:K\mapsto \istar j^\star\FT^{\bar\psi}(j_!K)$.
\end{lem}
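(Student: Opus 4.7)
The plan is to check $\Psi\circ\Phi\cong\mathrm{id}_\PPP$ and $\Phi\circ\Psi\cong\mathrm{id}_\PPP$ by unwinding both compositions via the convolution--Fourier compatibility (\ref{FT}) and using that $\FT^{\bar\psi}$ is a genuine quasi-inverse of $\FT^\psi$ on $\Dbc(\AAA^1_{\bar k},\QQ)$.

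For $K\in\PPP$ formula (\ref{FT}) gives $\Phi(K)\cong j^\star\FT^\psi(j_!\iota^\star K)$, so
$$
\Psi(\Phi(K))\cong\iota^\star j^\star\FT^{\bar\psi}\bigl(j_!j^\star\FT^\psi(j_!\iota^\star K)\bigr),
$$
and symmetrically
$$
\Phi(\Psi(K))\cong j^\star\FT^\psi\bigl(j_!j^\star\FT^{\bar\psi}(j_!K)\bigr).
$$
If in each expression the inner $j_!j^\star$ could be removed, the outer Fourier pair would cancel, and $j^\star j_!=\mathrm{id}$ together with $\iota^\star\iota^\star=\mathrm{id}$ would reduce both compositions to the identity on $K$.

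The main step, and the only real content, is therefore the following assertion: for any $M\in\Dbc(\AAA^1_{\bar k},\QQ)$ the map induced by $j_!j^\star M\to M$ identifies $j^\star\FT^\psi(j_!j^\star M)$ with $j^\star\FT^\psi(M)$ in $\Dbc(\Gmk,\QQ)/S$, and similarly for $\FT^{\bar\psi}$. This is where the quotient by $S$ is used: the cone of $j_!j^\star M\to M$ is of the form $i_{0,\star}i_0^\star M$, supported at the origin, and the Fourier transform of any object supported at $0$ is a successive extension of constant sheaves on $\AAA^1_{\bar k}$ (appropriately shifted and Tate-twisted), whose $j^\star$ lies in $S$ by the very definition of $S$ as the thick subcategory of Kummer-extension objects. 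This is essentially the same computation already invoked in the excerpt to show that $\FT^\psi$ descends to $\Dbc(\Gmk,\QQ)/S$. Applying the assertion to $M=\FT^\psi(j_!\iota^\star K)$ and to $M=\FT^{\bar\psi}(j_!K)$ respectively, and noting that all the intervening isomorphisms are natural in $K$, yields the desired natural isomorphisms of functors $\Psi\Phi\cong\mathrm{id}_\PPP$ and $\Phi\Psi\cong\mathrm{id}_\PPP$.
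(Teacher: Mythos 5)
Your proposal is correct and follows essentially the same route as the paper: both reduce to the observation that the cone of $j_!j^\star M\to M$ is punctual at the origin, so its Fourier transform is constant and hence lands in $S$ after restriction to $\Gmk$, allowing the Fourier pair to cancel. The paper phrases this by applying $\FT^{\bar\psi}$ to the localization triangle rather than stating it as a standalone assertion about $j^\star\FT^\psi(j_!j^\star M)\cong j^\star\FT^\psi(M)$, but the content is identical.
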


\begin{proof}
 Let $K\in \PPP$, then 
$$
\Psi(\Phi(K))=\istar j^\star\FT^{\bar\psi}(j_!j^\star\FT^\psi(j_!\istar K)).
$$

If $k:\{0\}\to\AAA^1_{\bar k}$ is the inclusion, we have an exact triangle
$$
j_!j^\star\FT^\psi(j_!\istar K)\to\FT^\psi(j_!\istar K)\to k_\star k^\star\FT^\psi(j_!\istar K)\to
$$
which, after applying the triangulated functor $\FT^{\bar\psi}$ (which is the inverse of $\FT^\psi$), turns into
$$
\FT^{\bar\psi}(j_!j^\star\FT^\psi(j_!\istar K))\to j_!\istar K \to \FT^{\bar\psi}(k_\star k^\star\FT^\psi(j_!\istar K))\to.
$$

The last object is constant, being the Fourier transform with respect to $\bar\psi$ of a punctual object supported at $0$. So we get an isomorphism in $\PPP$
$$
\FT^{\bar\psi}(j_!j^\star\FT^\psi(j_!\istar K))\cong j_!\istar K
$$
and therefore
$$
\Psi(\Phi(K))\cong \istar j^\star j_! \istar K=\istar \istar K=K.
$$ 

In a similar way one can show that $\Phi(\Psi(K))\cong K$ in $\PPP$.
\end{proof}

We take $\rho_{(\infty,\infty)}$ to be the ``local convolution'' operator defined in \cite[Chapter 6]{katz1988gauss}: If $X:=\AAA^2_{(0,0)}$ (respectively $S:=\AAA^1_{(0)}$) denotes the strict henselization of $\AAA^2_{\bar k}$ at $(0,0)$ (resp. the strict henselization of $\AAA^1_{\bar k}$ at $0$) and $\bar\eta$ is a geometric generic point of $S$, then
$$
\rho_{(\infty,\infty)}(\FF,\GGG):=\HH^1(X\times_S\bar\eta,\FF\boxtimes\GGG)
$$
where the fibre product is taken with respect to the multiplication map $X\to S$, $(s,t)\mapsto st$. Then proposition \ref{infinity} holds for $K=\FF[1],L=\GGG[1]\in{\mathcal C}$ by \cite[6.6]{katz1988gauss}. Let $\mathcal T\subseteq\PPP$ be the subcategory of smooth objects which are tamely ramified at $0$ \cite[5.2]{katz1988gauss} (the truth of this property is independent of the choice of a representative for a class $K\in\PPP$). We will now show that proposition \ref{infinity} holds for objects in $\mathcal T$.

\begin{lem}\label{inT}
 Let $K=\FF[1]$ and $L=\GGG[1]$ be objects in ${\mathcal T}\subseteq{\mathcal P}$. Then 
$$
(K\ast L)_{(\infty)}^w\cong\rho_{(\infty,\infty)}(K_{(\infty)}^w,L_{(\infty)}^w).
$$
\end{lem}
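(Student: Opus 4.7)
My plan is to reduce Lemma~\ref{inT} to the case $K, L \in \mathcal{C}$ already handled by \cite[6.6]{katz1988gauss}, through a filtration of $\FF$ and $\GGG$ that isolates the totally-wild-at-$\infty$ part modulo $S$.

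The key step is to construct, for any $\FF$ lisse on $\Gmk$ and tame at $0$, a short exact sequence of lisse sheaves
\[
0 \to \FF_t \to \FF \to \FF_w \to 0
\]
with $\FF_t$ tame at $\infty$ (hence, being also tame at $0$, in $S$) and $\FF_w$ totally wild at $\infty$ (so $\FF_w[1]\in\mathcal{C}$), arranged so that $\FF_w|_{I_\infty} \cong V^w = K_{(\infty)}^w$. Writing $V = \FF_{\bar\eta}$ and invoking the canonical slope decomposition $V = V^{P_\infty} \oplus V^w$ as an $I_\infty$-representation \cite[1.1]{katz1988gauss}, I would take $\FF_t$ to be the lisse subsheaf whose generic stalk is $V^{P_\infty}$; Maschke's theorem applied to the pro-$p$ group $P_\infty$ with $\QQ$-coefficients (using $\ell\neq p$) then identifies $V/V^{P_\infty}$ with $V^w$ as $I_\infty$-representation. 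For this to actually define a sub-lisse-sheaf, the subspace $V^{P_\infty}$ must be stable under the full $\pi_1(\Gmk)$-action, and this is where the hypothesis of tameness at $0$ enters crucially: $\FF$'s monodromy factors through $\pi_1^{\mathrm{tame\ at\ 0}}(\Gmk)$, and in this quotient $P_\infty$ is a normal subgroup.

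Once the filtrations are in place, $\FF_t\in S$ gives $K \cong \FF_w[1]$ in $\PPP$; similarly $L \cong \GGG_w[1]$ in $\PPP$ with $\GGG_w[1]\in\mathcal{C}$. Since both sides of the formula depend only on the classes of $K$ and $L$ in $\PPP$ and on their wild parts at $\infty$---both preserved under the replacement---we may assume $K = \FF_w[1]$, $L = \GGG_w[1]\in\mathcal{C}$, and \cite[6.6]{katz1988gauss} then yields
\[
(K\ast L)_{(\infty)}^w \cong \rho_{(\infty,\infty)}(\FF_w|_{I_\infty}, \GGG_w|_{I_\infty}) \cong \rho_{(\infty,\infty)}(K_{(\infty)}^w, L_{(\infty)}^w),
\]
as required.

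The main obstacle is verifying the normality of $P_\infty$ in $\pi_1^{\mathrm{tame\ at\ 0}}(\Gmk)$. One checks this by analyzing a finite tame-at-$0$ Galois cover of $\Gmk$: the image of $P_\infty$ in the Galois group arises as a first (upper numbering) ramification group at a point over $\infty$ and is therefore normal in the decomposition group; a transitivity argument on the fiber over $\infty$ then propagates this normality to the whole Galois group, and passing to inverse limits gives the claim. Should this structural step prove inconvenient, one could instead argue along a composition series of $K$ in $\PPP$ and show, using the semisimplicity of the $P_\infty$-action, that every simple object of $\mathcal{T}$ lies in $S\cup\mathcal{C}$, reducing to the $\mathcal{C}$-case by bifunctoriality.
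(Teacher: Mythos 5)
Your reduction to the case $K,L\in\mathcal{C}$ rests on the existence, for every lisse $\FF$ on $\Gmk$ tame at $0$, of a short exact sequence of lisse sheaves $0\to\FF_t\to\FF\to\FF_w\to 0$ with $\FF_t$ tame at infinity and $\FF_w$ totally wild at infinity. No such filtration exists in general, and the normality claim underlying it is false. A concrete counterexample: an irreducible hypergeometric sheaf $\HHH(\psi;\chi_1,\dots,\chi_n;\rho_1,\dots,\rho_m)$ with $n>m\geq 1$ and no $\chi_i$ equal to any $\rho_j$ is lisse on $\Gmk$ and tame at $0$, and its local monodromy at infinity is the direct sum of a tame part of dimension $m$ and a totally wild part of dimension $n-m$; yet its generic stalk is an \emph{irreducible} $\pi_1(\Gmk)$-representation, so it has no nonzero proper lisse subsheaf at all, in particular none with generic stalk $V^{P_\infty}$. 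The same example defeats your fallback: $\HHH[1]$ is a simple object of $\mathcal{T}$ lying neither in $S$ nor in $\mathcal{C}$. The flaw in the normality argument is that, in a finite tame-at-$0$ Galois cover, the wild inertia group at a given point over $\infty$ is normal only in the decomposition group at that point; transitivity of the Galois action on the fibre over $\infty$ shows the wild inertia groups at the various points are all \emph{conjugate}, not that any one of them is normal in the whole Galois group (the subgroup they jointly generate is normal, but it is strictly larger than a single $P_\infty$ in general). The decomposition $V=V^{P_\infty}\oplus V^w$ is canonical only as a decomposition of $I_\infty$-representations and does not globalize.

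The paper circumvents this by never splitting the sheaf globally: it runs Katz's vanishing-cycles argument for the compactification $XY=tZ^2$ of the multiplication map, shows that $(K\ast L)^w_{(\infty)}$ coincides with the wild part of the single stalk $\R^1\Phi_{(0,0,1)}$ of the vanishing cycles complex (which depends only on the $I_\infty$-representations $K_{(\infty)}$ and $L_{(\infty)}$), and then checks that the tame summands of $K_{(\infty)}$ and $L_{(\infty)}$ contribute only tamely to that stalk, by reducing to the fact that $\LL_\chi[1]\ast_!L$ is a successive extension of Kummer objects. Any repair of your argument would have to replace your global splitting by such a purely local one at infinity.
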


\begin{proof}
 The proof is very similar to that of \cite[Theorem 6.5]{katz1988gauss}. For convenience we will review the main steps of the proof, indicating the differences where necessary. Consider the hypersurface $V\hookrightarrow \PP^2_{\bar k}\times\AAA^1_{\bar k}$ defined by the equation $XY=tZ^2$, and the projection $\pi:V\to\AAA^1_{\bar k}$. Then if $S$ is the henselization of $\AAA^1_{\bar k}$ at $0$, the map $V_S:=V\times_{\AAA^1_{\bar k}}S\to S$ gives a compactification of the multiplication map $\Gmk\times\Gmk\to\Gmk$ in a neighborhood of infinity, via the immersion $(s,t)\in\Gmk\times\Gmk\mapsto (s^{-1},t^{-1},1)\in\PP^2_{\bar k}$.

By the vanishing cycles exact triangle, the action of $I_\infty$ on $K\ast L$ is then given by the action of $I_0$ on $\R\Gamma(V_s,\R\Phi)$, where $V_s$ is the special fibre of the map $V_S\to S$ and $\R\Phi$ is the complex of vanishing cycles for the map $V_S\to S$ and the object $K_{Z/X}\otimes L_{Z/Y}=\FF_{Z/X}\otimes\GGG_{Z/Y}[2]$ extended by zero to $V_S$. 

The special fibre $V_s$ is the union of the lines $X=0$ and $Y=0$. By \cite[Lemma 6.5.3]{katz1988gauss} (which does not need $\FF$ or $\GGG$ to be totally wild at infinity) $I_0$ acts tamely on $\R\Phi$ along these lines, except perhaps at the points $(0,0,1)$, $(0,1,0)$ and $(1,0,0)$. Similarly, \cite[Lemma 6.5.4]{katz1988gauss} (which again does not use the hypothesis that $\FF$ and $\GGG$ are totally wild at infinity) shows that $I_0$ acts tamely on the stalks of $\R\Phi$ at the points $(1,0,0)$ and $(0,1,0)$.

Let $i:\{(0,0,1)\}\hookrightarrow V_s$ and $j:V_s\backslash\{(0,0,1)\}\hookrightarrow V_s$ the inclusions. We have an exact triangle
$$
j_!j^\star\R\Phi\to\R\Phi\to i_\star (\R\Phi_{(0,0,1)})\to
$$
of objects on $\Dbc(V_s,\QQ)$ with an $I_0$-action, which gives an exact triangle in the derived category of $\ell$-adic $I_0$-representations
\begin{equation}\label{eqvanishing}
\R\Gamma(V_s,j_!j^\star\R\Phi)\to\R\Gamma(V_s,\R\Phi)\to \R\Phi_{(0,0,1)}\to.
\end{equation}
We have seen that $I_0$ acts tamely on $j_!j^\star\R\Phi$, so it acts tamely on $\R\Gamma(V_s,j_!j^\star\R\Phi)$. It follows from the triangle above that there is an isomorphism between the wild part of the action of $I_0$ on $H^i(V_s,\R\Phi)$ (which is precisely $(K\ast L)^w_\infty$ for $i=-1$) and the wild part of the action of $I_0$ on $\R^i\Phi_{(0,0,1)}$ for every $i\in{\mathbb Z}$. By \cite[6.6]{katz1988gauss}, $\R^i\Phi_{(0,0,1)}$ depends only on $K_{(\infty)}$ and $L_{(\infty)}$. The functor that assigns to every pair $(\FF,\GGG)$ of $I_\infty$-representations their corresponding $\R^{1}\Phi(\FF,\GGG)_{(0,0,1)}$ preserves direct sums, and for $\FF$ and $\GGG$ totally wild it is Katz's ``local convolution''. In order to finish the proof, it remains to show that $\R^1\Phi(\FF^w,\GGG^w)$ is the wild part of $\R^1\Phi(\FF,\GGG)$, where $\FF^w$, $\GGG^w$ denote the wild parts of $\FF$ and $\GGG$ respectively.

Since $\R^1\Phi(\FF^w,\GGG^w)_{(0,0,1)}$ is totally wild by \cite[Theorem 6.5]{katz1988gauss} and $\FF=\FF^t\oplus\FF^w$, $\GGG=\GGG^t\oplus\GGG^w$ (where $\FF^t$ and $\GGG^t$ are the tame parts of $\FF$ and $\GGG$), it suffices to show that $\R^1\Phi(\FF,\GGG)_{(0,0,1)}$ is tame if either $\FF$ or $\GGG$ is tame. Since every tame $I_0$-representation is a succesive extension of tame characters $\LL_\chi$, it is enough to show that $\R^1\Phi(\LL_\chi,\GGG)_{(0,0,1)}$ is tame for any $\GGG$. By the triangle \eqref{eqvanishing}, this is equivalent to $\HH^1(V_s,\R\Phi(\LL_\chi,\GGG))$ being tame for any $\GGG$. But $\HH^1(V_s,\R\Phi(\LL_\chi,\GGG))$ is just $\HHH^{-1}(\LL_\chi[1]\ast_! L)$ as a representation of $I_\infty$, and it is known that $\LL_\chi[1]\ast_!\GGG$ is a succesive extension of Kummer objects \cite[Proposition 3.6.4]{gabber1996faisceaux}. In particular, it is a tame representation of $I_\infty$.
\end{proof}

Proposition \ref{slopes} gives the slopes of $\rho_{(\infty,\infty)}(\FF,\GGG)$ in terms of the slopes of $\FF$ and $\GGG$. By additivity, we may assume that $\FF$ and $\GGG$ have a single slope.

\begin{prop}\label{slopes1}
 If $\FF,\GGG\in\RR_\infty^w$ have single slopes $a$ and $b$ and ranks $m$ and $n$ respectively, then $\rho_{(\infty,\infty)}(\FF,\GGG)$ has a single slope $\frac{ab}{a+b}$ and rank $mn(a+b)$. 
\end{prop}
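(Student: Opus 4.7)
The plan is to reduce the general case to the situation of Proposition \ref{slopes} by realizing $\FF$ and $\GGG$ as local monodromies at infinity of objects in $\mathcal{C}$, and then applying Lemma \ref{inT}.

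First I would invoke the Katz--Gabber canonical extension (see \cite[Chapter 1]{katz1988gauss}) to find smooth $\QQ$-sheaves $\widetilde\FF$ and $\widetilde\GGG$ on $\Gmk$, each tamely ramified at $0$ and totally wildly ramified at $\infty$, whose local monodromies at $\infty$ are isomorphic to $\FF$ and $\GGG$ respectively. Since $\FF$ and $\GGG$ are already totally wild, this is possible and gives $\widetilde\FF[1],\widetilde\GGG[1]\in\mathcal{C}$ with $(\widetilde\FF[1])_{(\infty)}=(\widetilde\FF[1])_{(\infty)}^w\cong\FF$ and likewise for $\GGG$.

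Next I would apply Lemma \ref{inT} to these two objects of $\mathcal{C}\subseteq\mathcal T$, yielding
$$
\rho_{(\infty,\infty)}(\FF,\GGG)\cong\bigl(\widetilde\FF[1]\ast\widetilde\GGG[1]\bigr)_{(\infty)}^w.
$$
By \cite[Theorem 5.1(1)]{katz1988gauss} the convolution $\widetilde\FF[1]\ast\widetilde\GGG[1]$ again lies in $\mathcal{C}$, so in particular it is totally wild at infinity; hence the wild part of its local monodromy at $\infty$ coincides with the full local monodromy. By Proposition \ref{slopes}, this local monodromy has a single slope $\frac{ab}{a+b}$, which gives the slope assertion. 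For the rank, \cite[Theorem 5.1(4)]{katz1988gauss} computes $\mathrm{rk}(\widetilde\FF\ast\widetilde\GGG)=mn(a+b)$, and a sheaf in $\mathcal{C}$ which is totally wild at infinity has rank equal to the dimension of its local monodromy there.

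The only non-routine step is the realization in the first paragraph; one must ensure that the canonical extension can be chosen to be tame at $0$ (so the resulting complex lies in $\mathcal{C}$, not merely in $\mathcal T$), but this is exactly the content of the Katz--Gabber construction on $\Gmk$ versus $\AAA^1_{\bar k}$, and requires no new input. Everything else is a direct concatenation of Lemma \ref{inT}, Proposition \ref{slopes} and the rank formula of \cite[Theorem 5.1(4)]{katz1988gauss}.
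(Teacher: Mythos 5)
Your proof is correct and is essentially the argument the paper intends: the paper simply asserts that Proposition \ref{slopes} gives the slopes of $\rho_{(\infty,\infty)}$, leaving implicit exactly the globalization you spell out (realize $\FF,\GGG$ via the Katz extension theorem as monodromies at infinity of objects of $\mathcal C$, identify $\rho_{(\infty,\infty)}$ with the monodromy at infinity of the global convolution via \cite[6.6]{katz1988gauss} or Lemma \ref{inT}, and read off slope and rank from Proposition \ref{slopes} and \cite[Theorem 5.1(4)]{katz1988gauss}). The only cosmetic difference is the citation for the realization step: the paper cites \cite[Theorem 1.5.6]{katz1986local} for this elsewhere, but your appeal to the Katz--Gabber canonical extension is the same tool.
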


Next, we will define the functors $\rho_{(t,\infty)}$ for $t\in\bar k^\star$. Let $\iota:\PP^1_{\bar k}\to\PP^1_{\bar k}$ be the inversion map $t\mapsto t^{-1}$. It induces equivalences of categories, denoted $\istar$, between $\RR_t$ and $\RR_{t^{-1}}$ for every $t\in\PP^1_{\bar k}$. For $\FF\in\RR_t$ and $\GGG\in\RR_\infty^w$, we set
$$
\rho_{(t,\infty)}(\FF,\GGG)=\istar\FT_{(0,\infty)}^{-1}(\rho_{(\infty,\infty)}(\FT_{(t^{-1},\infty)}\istar \FF, \GGG))\in\RR_\infty^w
$$
where $\FT_{(0,\infty)}$ and $\FT_{(t^{-1},\infty)}$ are the local Fourier transform functors with respect to $\psi$ (cf. Theorem \ref{LFTT}).

The slopes and dimensions of $\rho_{(t,\infty)}(\FF,\GGG)$ can be given in terms of those of $\FF$ and $\GGG$. As before, we may assume that $\FF$ and $\GGG$ have single slopes.

\begin{prop}\label{slopes2}
 If $\FF\in\RR_t$, $\GGG\in\RR_\infty^w$ have single slopes $a\geq 0$ and $b>0$ and dimensions $m$ and $n$ respectively, then $\rho_{(t,\infty)}(\FF,\GGG)$ has a single slope $b$ and dimension $mn(a+1)$.  
\end{prop}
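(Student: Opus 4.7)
The approach is a direct computation: unwind the definition
$$\rho_{(t,\infty)}(\FF,\GGG)=\istar\FT_{(0,\infty)}^{-1}(\rho_{(\infty,\infty)}(\FT_{(t^{-1},\infty)}\istar \FF, \GGG))$$
by tracking slopes and dimensions through each of the five functors in the composition, using Theorem \ref{LFTT} together with the slope/dimension formula from Proposition \ref{slopes1}. All ingredients are exact and compatible with the canonical splitting into isotypic slope pieces, so the single-slope hypothesis is preserved at every stage.

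First I would note that the inversion $\istar:\RR_t\to\RR_{t^{-1}}$ swaps the uniformizers $x$ and $x^{-1}$, so it preserves slopes and dimensions; hence $\istar\FF$ has single slope $a$ and dimension $m$. Next, property (4) of Theorem \ref{LFTT} identifies $\FT_{(t^{-1},\infty)}\istar\FF$ (after the translation identification $\RR_{t^{-1}}\cong\RR_0$) with $\LL_{\psi_{t^{-1}}}\otimes\FT_{(0,\infty)}\istar\FF$. By property (1), $\FT_{(0,\infty)}\istar\FF$ has single slope $\tfrac{a}{a+1}<1$ and dimension $(a+1)m$, and tensoring with the slope-$1$, rank-$1$ representation $\LL_{\psi_{t^{-1}}}$ forces the unique slope to $1$ (the larger slope dominates, by \cite[Lemma 1.3]{katz1988gauss}) while leaving the dimension unchanged. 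Thus $\FT_{(t^{-1},\infty)}\istar\FF$ has single slope $1$ and dimension $(a+1)m$.

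Applying Proposition \ref{slopes1} to the pair (slope $1$, dimension $(a+1)m$) and (slope $b$, dimension $n$) yields that $\rho_{(\infty,\infty)}(\FT_{(t^{-1},\infty)}\istar\FF,\GGG)$ has single slope
$$\frac{1\cdot b}{1+b}=\frac{b}{b+1}$$
and dimension $(a+1)m\cdot n\cdot(1+b)=mn(a+1)(b+1)$. Because this slope is strictly less than $1$, the quasi-inverse $\FT_{(0,\infty)}^{-1}$ is well-defined on this object and agrees with $\FT_{(\infty,0)}$; by property (3) of Theorem \ref{LFTT} the result has single slope
$$\frac{b/(b+1)}{1-b/(b+1)}=b$$
and dimension $\tfrac{1}{b+1}\cdot mn(a+1)(b+1)=mn(a+1)$. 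Finally, applying $\istar$ once more preserves both invariants, giving the claimed slope $b$ and dimension $mn(a+1)$.

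The argument is entirely mechanical once the ingredients are in place; the only point that requires attention is the well-definedness of $\FT_{(0,\infty)}^{-1}$, which is precisely the verification that the intermediate slope $\tfrac{b}{b+1}$ is less than $1$, automatic for any $b>0$. No real obstacle arises.
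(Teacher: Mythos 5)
Your proposal is correct and follows essentially the same route as the paper: unwind the definition, use LFTT property (4) to see that $\FT_{(t^{-1},\infty)}\istar\FF$ has single slope $1$ and dimension $m(a+1)$, apply Proposition \ref{slopes1} to get slope $\frac{b}{b+1}$ and dimension $mn(a+1)(b+1)$, and then invert $\FT_{(0,\infty)}$ to land on slope $b$ and dimension $mn(a+1)$. The only cosmetic difference is how the final dimension is computed (your division by $b+1$ versus the paper's subtraction $mn(a+1)(b+1)-mn(a+1)b$), which amounts to the same use of LFTT.
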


\begin{proof}
 This is immediate from the definition of $\rho_{(t,\infty)}$, proposition \ref{slopes1} and the properties of the local Fourier transform. By LFTT, $\FT_{(t^{-1},\infty)}\istar \FF\cong\LL_{\psi_{t^{-1}}}\otimes\FT_{(0,\infty)}\istar\FF$ has a single slope $1$ and rank $m(a+1)$. By proposition \ref{slopes1}, $\rho_{(\infty,\infty)}(\FT_{(t^{-1},\infty)}\istar \FF, \GGG)$ has slope $\frac{b}{b+1}$ and rank $mn(a+1)(b+1)$. Again by LFTT we conclude that $\rho_{(t,\infty)}(\FF,\GGG)$ has a single slope $b$ and rank $mn(a+1)(b+1)-mn(a+1)b=mn(a+1)$.
\end{proof}

We turn now to the definiton of $\rho_{(0,\infty)}$. By additivity it suffices to define $\rho_{(0,\infty)}(\FF,\GGG)$ for $\FF\in\RR^w_0$ with a single slope, since $\Hom_{\RR^w_0}(\FF,\FF')=\{0\}$ if $\FF$ and $\FF'$ have no slopes in common \cite[Proposition 1.1(4)]{katz1988gauss}. We start with the case where the slope of $\FF$ is $>1$. In that case, we define
$$
\rho_{(0,\infty)}(\FF,\GGG)=\istar\FT_{(0,\infty)}^{-1}(\rho_{(\infty,\infty)}(\FT_{(\infty,\infty)}\istar\FF,\GGG)).
$$

Now suppose that the slope of $\FF$ is $1$. By \cite[Lemma 8.5.7]{katz1988gauss} (applied to the irreducible components of $\FF$) there exist uniquely determined $t_1,\ldots,t_r\in \bar k^\star$ such that there is a canonical decomposition
$$
\FF\cong\bigoplus_{j=1}^r\FF_j\otimes \istar\LL_{\psi_{t_j}}
$$
with each $\FF_j$ having all slopes $<1$. We define
$$
\rho_{(0,\infty)}(\FF,\GGG)=\bigoplus_{j=1}^r \istar\FT_{(0,\infty)}^{-1}(\rho_{(t_j,\infty)}(\FT_{(\infty,0)}\istar\FF_j,\GGG))
$$

Finally, let $\FF$ have slope $a<1$. The functor $\FF\mapsto\FT_{(\infty,0)}\istar\FF$ is an equivalence of categories between $\RR_0^{w,<1}$ and $\RR_0^w$, and $\FT_{(\infty,0)}\istar\FF$ has slope $\frac{a}{1-a}=\frac{1}{a^{-1}-1}$. In particular, if $d=\lceil a^{-1}\rceil-1$, the $d$-th fold operation of the functor on $\FF$ has slope $\geq 1$. Therefore we can define recursively
$$
\rho_{(0,\infty)}(\FF,\GGG)=\istar\FT_{(0,\infty)}^{-1}(\rho_{(0,\infty)}(\FT_{(\infty,0)}\istar\FF,\GGG)).
$$

Once again we can determine the slopes and dimension of $\rho_{(0,\infty)}(\FF,\GGG)$ in terms of those of $\FF$ and $\GGG$. We may assume that $\FF$ and $\GGG$ have single slopes.

\begin{prop}\label{slopes3}
 If $\FF\in\RR^w_0$, $\GGG\in\RR_\infty^w$ have single slopes $a>0$ and $b>0$ and dimensions $m$ and $n$ respectively, then $\rho_{(0,\infty)}(\FF,\GGG)=0$ if $a\leq b$, and $\rho_{(0,\infty)}(\FF,\GGG)$ has a single slope $\frac{ab}{a-b}$ and dimension $mn(a-b)$ if $a>b$.  
\end{prop}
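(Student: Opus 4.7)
My plan is to induct on $d=\lceil a^{-1}\rceil-1$, matching the three branches of the definition of $\rho_{(0,\infty)}$. The tools will be LFTT (which records slope and dimension under each local Fourier transform), Proposition \ref{slopes1} (for $\rho_{(\infty,\infty)}$) and Proposition \ref{slopes2} (for $\rho_{(t,\infty)}$). The vanishing half of the statement will in every case be forced by $\FT_{(0,\infty)}^{-1}$ annihilating representations of slope $\geq 1$; the nonzero half will be pure slope/dimension bookkeeping.

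\emph{Base case $a>1$.} Use the first branch. LFTT gives $\FT_{(\infty,\infty)}\istar\FF$ single slope $a/(a-1)$ and dimension $m(a-1)$; Proposition \ref{slopes1} then yields that $\rho_{(\infty,\infty)}(\FT_{(\infty,\infty)}\istar\FF,\GGG)$ has single slope $c=ab/(ab+a-b)$ and dimension $mn(ab+a-b)$. Since $c<1$ iff $a>b$, either $\FT_{(0,\infty)}^{-1}$ kills the object (when $a\le b$), or it produces slope $c/(1-c)=ab/(a-b)$ and dimension $(1-c)mn(ab+a-b)=mn(a-b)$, both preserved by $\istar$.

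\emph{Base case $a=1$.} Use the canonical decomposition $\FF\cong\bigoplus_j\FF_j\otimes\istar\LL_{\psi_{t_j}}$ with $\dim\FF_j=m_j$ summing to $m$ and each $\FF_j$ of slope $a_j<1$, treating summands separately. Then $\FT_{(\infty,0)}\istar\FF_j$ has slope $a_j/(1-a_j)$ and dimension $(1-a_j)m_j$, Proposition \ref{slopes2} gives $\rho_{(t_j,\infty)}(\cdot,\GGG)$ single slope $b$ and dimension $m_jn$, and $\FT_{(0,\infty)}^{-1}$ is $0$ if $b\ge 1$ and otherwise has slope $b/(1-b)$ and dimension $(1-b)m_jn$. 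Summing reproduces the claim for $a=1$ (note $b/(1-b)=ab/(a-b)$ and $(1-b)m_jn$ sums to $mn(a-b)$).

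\emph{Inductive step $a<1$.} Here $\FT_{(\infty,0)}\istar\FF$ has slope $a'=a/(1-a)$ and dimension $(1-a)m$, and $\lceil 1/a'\rceil-1=d-1$, so the induction hypothesis applies. If $a'>b$ it yields slope $a'b/(a'-b)=ab/(a+ab-b)$ and dimension $mn(a+ab-b)$; if $a'\le b$ it yields $0$. Applying $\istar\FT_{(0,\infty)}^{-1}$: when $a>b$ the intermediate slope $ab/(a+ab-b)$ is $<1$ and direct computation recovers the target slope $ab/(a-b)$ and dimension $mn(a-b)$; when $a\le b$ but $a'>b$, the same intermediate slope now satisfies $ab/(a+ab-b)\ge 1$ (this inequality is equivalent to $a\le b$), so $\FT_{(0,\infty)}^{-1}$ kills the output. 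The main obstacle will be precisely this last regime: since the threshold $a'>b$ does not coincide with $a>b$, the induction may produce a nonzero intermediate object that must then be annihilated by $\FT_{(0,\infty)}^{-1}$, so one must track the exact slope $ab/(a+ab-b)$ rather than only the dichotomy $a\gtrless b$ to see that the expected vanishing genuinely happens.
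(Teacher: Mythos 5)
Your proposal is correct and follows essentially the same route as the paper: the same three-branch case analysis matching the definition of $\rho_{(0,\infty)}$, the same slope/dimension bookkeeping via LFTT and Propositions \ref{slopes1} and \ref{slopes2}, and the same induction on $\lceil a^{-1}\rceil$ for $a<1$. You also correctly isolate the one delicate regime $a\le b<\frac{a}{1-a}$, where the intermediate object is nonzero of slope $\frac{ab}{a-b+ab}\ge 1$ and is killed by $\FT_{(0,\infty)}^{-1}$ — exactly as in the paper's argument.
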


\begin{proof}
 Suppose that $a>1$. Then $\istar\FF$ has a single slope $a$, so $\FT_{(\infty,\infty)}\istar\FF$ has a single slope $\frac{a}{a-1}$ and dimension $m(a-1)$ by LFTT. By proposition \ref{slopes1}, $\rho_{(\infty,\infty)}(\FT_{(\infty,\infty)}\istar\FF,\GGG)$ has a single slope $\frac{ab}{ab+a-b}$ and dimension $mn(a-1)(\frac{a}{a-1}+b)=mn(ab+a-b)$. If $a\leq b$ then $\frac{ab}{ab+a-b}\geq 1$, so $\FT_{(0,\infty)}^{-1}(\rho_{(\infty,\infty)}(\FT_{(\infty,\infty)}\istar\FF,\GGG))=0$. Otherwise, $\FT_{(0,\infty)}^{-1}(\rho_{(\infty,\infty)}(\FT_{(\infty,\infty)}\istar\FF,\GGG))$ has slope $\frac{ab}{a-b}$ and dimension $mn(a-b)$ by LFTT.

Now suppose that $a=1$, and let $\FF\cong\bigoplus_{j=1}^r\FF_j\otimes \istar\LL_{\psi_{t_j}}$ with $\FF_j$ having slopes $<1$. By additivity, we may assume that $r=1$ and $\FF_1$ has a single slope $c<1$. Then $\FT_{(\infty,0)}\istar\FF_1$ has slope $\frac{c}{1-c}$ and dimension $m(1-c)$, so by proposition \ref{slopes2} $\rho_{(t_1,\infty)}(\FT_{(\infty,0)}\istar\FF_1,\GGG)$ has slope $b$ and dimension $mn(1-c)(1+\frac{c}{1-c})=mn$. We conclude that $\istar\FT_{(0,\infty)}^{-1}(\rho_{(t_j,\infty)}(\FT_{(\infty,0)}\istar\FF_1,\GGG))$ is $0$ if $b\geq 1$, and has slope $\frac{b}{1-b}$ and dimension $mn(1-b)$ if $b<1$ by LFTT.

It remains to prove it when $a<1$. Then $\FT_{(\infty,0)}\istar\FF$ has slope $\frac{a}{1-a}=\frac{1}{a^{-1}-1}$ and dimension $m(1-a)$. So we can assume, by induction on $\lceil a^{-1}\rceil$, that $\rho_{(0,\infty)}(\FT_{(\infty,0)}\istar\FF,\GGG)=0$ if $\frac{a}{1-a}\leq b$ and it has a single slope $\frac{ab}{a-b+ab}$ and dimension $mn(a-b+ab)$ otherwise. 

So if $\frac{a}{1-a}\leq b$ then
$$\rho_{(0,\infty)}(\FF,\GGG)=\istar\FT_{(0,\infty)}^{-1}(0)=0.$$
If $a\leq b<\frac{a}{1-a}$ then $\frac{ab}{a-b+ab}\geq 1$, so 
$$\rho_{(0,\infty)}(\FF,\GGG)=\istar\FT_{(0,\infty)}^{-1}(\rho_{(0,\infty)}(\FT_{(\infty,0)}\istar\FF,\GGG))=0.$$
Finally, if $a>b$ then $\rho_{(0,\infty)}(\FF,\GGG)=\istar\FT_{(0,\infty)}^{-1}(\rho_{(0,\infty)}(\FT_{(\infty,0)}\istar\FF,\GGG))$ has a single slope $\frac{ab}{a-b}$ and dimension $mn(a-b)$ by LFTT.
\end{proof}

The following cancellation lemmas will be useful later:

\begin{lem}\label{cancel}
 Let $\FF,\GGG\in\RR^w_\infty$. Then 
\begin{align*}&\istar\FT_{(0,\infty)}^{-1}(\rho_{(\infty,\infty)}(\FT_{(0,\infty)}\istar\FF,\GGG))\cong 
\\
&\cong \istar\FT_{(0,\infty)}^{-1}(\rho_{(\infty,\infty)}(\FF,\FT_{(0,\infty)}\istar\GGG))\cong\rho_{(\infty,\infty)}(\FF,\GGG).
\end{align*}
\end{lem}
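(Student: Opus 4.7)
The plan is to reduce the statement to the setting of Lemma \ref{inT} by realizing $\FF$ and $\GGG$ as wild local monodromies at $\infty$ of perverse sheaves in $\mathcal C$, and then exploiting the intertwining \eqref{FT} of convolution and Fourier transform encoded in the auto-equivalence $\Phi(K) = K \ast j^\star\LL_\psi[1]$ of $\PPP$. A standard realization result in the spirit of the constructions in \cite{katz1988gauss} produces $K, L \in \mathcal C$ with $K_{(\infty)} \cong \FF$ and $L_{(\infty)} \cong \GGG$; this is possible because $\FF$ and $\GGG$ are totally wild. Since $j^\star\LL_\psi[1] \in \mathcal C$ and $\mathcal C$ is closed under convolution by \cite[Theorem 5.1(1)]{katz1988gauss}, the objects $\Phi(K)$, $K \ast L$, and $\Phi(K \ast L)$ all lie in $\mathcal C \subset \mathcal T$, so Lemma \ref{inT} applies to each of them.

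The key local computation is that of $\Phi(K)_{(\infty)}$. From $\Phi(K) \cong j^\star\FT^\psi(j_!\istar K)$ and the fact that $\istar K$ is smooth on $\Gmk$ and tame at $\infty$, only the $\FT_{(0,\infty)}$ summand survives in the LFTT decomposition of Theorem \ref{LFTT}. Identifying $(\istar K)_{(0)}$ with $\istar\FF \in \RR_0$ via the natural isomorphism $I_0 \cong I_\infty$ induced by inversion yields
$$\Phi(K)_{(\infty)} \cong \FT_{(0,\infty)}\istar\FF.$$
Applying the same computation to $K \ast L$ in place of $K$, and using Lemma \ref{inT} to rewrite $(K \ast L)_{(\infty)} \cong \rho_{(\infty,\infty)}(\FF,\GGG)$, gives $\Phi(K \ast L)_{(\infty)} \cong \FT_{(0,\infty)}\istar\rho_{(\infty,\infty)}(\FF,\GGG)$.

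Associativity and commutativity of convolution give $\Phi(K) \ast L = \Phi(K \ast L)$ in $\PPP$, so Lemma \ref{inT} applied to the left-hand side reads
$$\rho_{(\infty,\infty)}(\FT_{(0,\infty)}\istar\FF, \GGG) \cong (\Phi(K) \ast L)_{(\infty)} \cong \FT_{(0,\infty)}\istar\rho_{(\infty,\infty)}(\FF,\GGG).$$
Proposition \ref{slopes1} confirms the inner object on the right lies in the slope-$<1$ image of $\FT_{(0,\infty)}$, so applying $\istar\FT_{(0,\infty)}^{-1}$ to both sides and using $\istar\circ\istar = \mathrm{id}$ yields the first claimed isomorphism. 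The second isomorphism follows by swapping the arguments and invoking the symmetry $\rho_{(\infty,\infty)}(\FF,\GGG) \cong \rho_{(\infty,\infty)}(\GGG,\FF)$, which is manifest from the symmetric defining formula as the multiplication map on $\AAA^2_{(0,0)}$ is commutative. The main obstacle will be the careful bookkeeping of the several canonical identifications between $I_0$ and $I_\infty$, and their representation categories, induced by the inversion map, so that the inputs to LFTT, to $\Phi$, and to the functors $\rho_{(\infty,\infty)}$ and $\FT_{(0,\infty)}^{-1}$ all live in compatible categories at each step.
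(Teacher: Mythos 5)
Your proof is correct and follows essentially the same route as the paper: both arguments realize $\FF$ and $\GGG$ as monodromies at infinity of globally defined objects in $\mathcal C$ (the paper cites \cite[Theorem 1.5.6]{katz1986local}), identify both sides of the key intertwining relation $\rho_{(\infty,\infty)}(\FT_{(0,\infty)}\istar\FF,\GGG)\cong\FT_{(0,\infty)}\istar\rho_{(\infty,\infty)}(\FF,\GGG)$ as monodromies at infinity of $(j^\star\LL_\psi[1]\ast_!\bar\FF[1])\ast_!\bar\GGG[1]$ and $j^\star\LL_\psi[1]\ast_!(\bar\FF[1]\ast_!\bar\GGG[1])$ via \eqref{FT} and the local description of convolution in $\mathcal C$, and conclude by associativity. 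Your extra bookkeeping with $\Phi$, Lemma \ref{inT} and the LFTT computation of $\Phi(K)_{(\infty)}$ is a correct elaboration of what the paper leaves implicit.
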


\begin{proof}
 The statement is equivalent to $$\rho_{(\infty,\infty)}(\FT_{(0,\infty)}\istar\FF,\GGG)\cong\FT_{(0,\infty)}\istar\rho_{(\infty,\infty)}(\FF,\GGG).$$
By \cite[Theorem 1.5.6]{katz1986local} there exist smooth sheaves $\bar\FF,\bar\GGG$ on $\GG_{m,\bar k}$, tamely ramified at $0$, such that their monodromies at infinity are isomorphic to $\FF$ and $\GGG$ respectively. Then $\rho_{(\infty,\infty)}(\FT_{(0,\infty)}\istar\FF,\GGG)$ is the monodromy at infinity of $(j^\star\LL_\psi[1]\ast_!\bar \FF[1])\ast_!\bar\GGG[1]$ by \eqref{FT} and \cite[6.6]{katz1988gauss}, and $\FT_{(0,\infty)}\istar\rho_{(\infty,\infty)}(\FF,\GGG)$ is the monodromy at infinity of $j^\star\LL_\psi[1]\ast_!(\bar\FF[1]\ast_!\bar\GGG[1])$. We conclude by the associativity of the convolution.
\end{proof}

\begin{lem}\label{cancel2}
 Let $t\in\bar k^\star$, $\FF\in\RR_t$ and $\GGG\in\RR^w_\infty$. Then 
$$\istar\FT_{(0,\infty)}^{-1}(\rho_{(t,\infty)}(\FF,\FT_{(0,\infty)}\istar\GGG))\cong\rho_{(t,\infty)}(\FF,\GGG).$$ 
\end{lem}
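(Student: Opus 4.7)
The plan is to reduce this statement directly to Lemma \ref{cancel} by unwinding the definition of $\rho_{(t,\infty)}$ on both sides. Recall that by definition
$$
\rho_{(t,\infty)}(\FF,\GGG)=\istar\FT_{(0,\infty)}^{-1}(\rho_{(\infty,\infty)}(\FT_{(t^{-1},\infty)}\istar\FF,\GGG)),
$$
so the right-hand side of the claimed isomorphism equals $\istar\FT_{(0,\infty)}^{-1}(\rho_{(\infty,\infty)}(\FT_{(t^{-1},\infty)}\istar\FF,\GGG))$, while substituting $\FT_{(0,\infty)}\istar\GGG$ in the second slot gives
$$
\rho_{(t,\infty)}(\FF,\FT_{(0,\infty)}\istar\GGG)=\istar\FT_{(0,\infty)}^{-1}\bigl(\rho_{(\infty,\infty)}(\FT_{(t^{-1},\infty)}\istar\FF,\FT_{(0,\infty)}\istar\GGG)\bigr).
$$

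The first step is therefore to apply Lemma \ref{cancel} to the inner $\rho_{(\infty,\infty)}$ on the left: taking the two arguments to be $\FF'=\FT_{(t^{-1},\infty)}\istar\FF\in\RR^w_\infty$ (which lies in $\RR^w_\infty$ since $\FT_{(t^{-1},\infty)}$ lands in slope-$1$ representations by LFTT, part (4)) and $\GGG'=\GGG\in\RR^w_\infty$, that lemma yields
$$
\istar\FT_{(0,\infty)}^{-1}\bigl(\rho_{(\infty,\infty)}(\FT_{(t^{-1},\infty)}\istar\FF,\FT_{(0,\infty)}\istar\GGG)\bigr)\cong\rho_{(\infty,\infty)}(\FT_{(t^{-1},\infty)}\istar\FF,\GGG).
$$

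Applying $\istar\FT_{(0,\infty)}^{-1}$ to both sides of this isomorphism gives exactly $\rho_{(t,\infty)}(\FF,\GGG)$ on the right, and exactly the left-hand side of the statement on the left. Since each step is either a formal substitution of the definition or a direct invocation of Lemma \ref{cancel}, there is essentially no obstacle — the only thing to verify is that $\FT_{(t^{-1},\infty)}\istar\FF$ genuinely lies in $\RR^w_\infty$ so that Lemma \ref{cancel} applies, which is guaranteed because local Fourier transforms from a finite point land in totally wild representations of slope $1$ at infinity.
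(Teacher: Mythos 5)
Your proof is correct and is exactly the argument the paper intends (the paper simply says the lemma is ``immediate by Lemma \ref{cancel} and the definition of $\rho_{(t,\infty)}$''): you unwind the definition, apply the second isomorphism of Lemma \ref{cancel} with first argument $\FT_{(t^{-1},\infty)}\istar\FF$, and correctly check via LFTT that this argument is totally wild of slope $1$ so the lemma applies.
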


\begin{proof}
 Immediate by lemma \ref{cancel} and the definition of $\rho_{(t,\infty)}$.
\end{proof}

\begin{lem}\label{cancel3}
 Let $\FF\in\RR^w_0$, $\GGG\in\RR^w_\infty$. Then
$$
\istar\FT^{-1}_{(0,\infty)}(\rho_{(0,\infty)}(\FF,\FT_{(0,\infty)}\istar\GGG))\cong\rho_{(0,\infty)}(\FF,\GGG).
$$
\end{lem}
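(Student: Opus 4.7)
The plan is to reduce to the two previous cancellation lemmas by mimicking the three-case definition of $\rho_{(0,\infty)}$. By additivity we may assume $\FF$ has a single slope $a>0$, and we argue by induction on $\lceil a^{-1}\rceil$ (which decreases under the equivalence $\FF\mapsto\FT_{(\infty,0)}\istar\FF$ on $\RR_0^{w,<1}$), splitting according to whether $a>1$, $a=1$, or $a<1$.

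First, when $a>1$, the definition unfolds to
$$\rho_{(0,\infty)}(\FF,\FT_{(0,\infty)}\istar\GGG)=\istar\FT_{(0,\infty)}^{-1}\bigl(\rho_{(\infty,\infty)}(\FT_{(\infty,\infty)}\istar\FF,\FT_{(0,\infty)}\istar\GGG)\bigr).$$
Lemma \ref{cancel} (applied in its equivalent form $\rho_{(\infty,\infty)}(\HH,\FT_{(0,\infty)}\istar\GGG)\cong\FT_{(0,\infty)}\istar\rho_{(\infty,\infty)}(\HH,\GGG)$) lets me pull the $\FT_{(0,\infty)}\istar$ out of the inner slot, and the two $\istar\FT_{(0,\infty)}^{-1}$'s cancel against $\FT_{(0,\infty)}\istar$ (using that $\istar$ is an involution), leaving exactly $\rho_{(\infty,\infty)}(\FT_{(\infty,\infty)}\istar\FF,\GGG)=\istar\FT_{(0,\infty)}\rho_{(0,\infty)}(\FF,\GGG)$. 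Applying $\istar\FT_{(0,\infty)}^{-1}$ to both sides gives the desired identity.

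When $a=1$, write $\FF\cong\bigoplus_j\FF_j\otimes\istar\LL_{\psi_{t_j}}$ with $\FF_j$ of slope $<1$. The definition of $\rho_{(0,\infty)}$ in this case is a direct sum of expressions of the form $\istar\FT_{(0,\infty)}^{-1}\bigl(\rho_{(t_j,\infty)}(\FT_{(\infty,0)}\istar\FF_j,\,\cdot\,)\bigr)$, and substituting $\FT_{(0,\infty)}\istar\GGG$ in the second slot, lemma \ref{cancel2} applied to each summand produces exactly $\rho_{(t_j,\infty)}(\FT_{(\infty,0)}\istar\FF_j,\GGG)$ after one more application of $\istar\FT_{(0,\infty)}^{-1}$, which reassembles to $\rho_{(0,\infty)}(\FF,\GGG)$.

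Finally, when $a<1$, the definition gives $\rho_{(0,\infty)}(\FF,\FT_{(0,\infty)}\istar\GGG)=\istar\FT_{(0,\infty)}^{-1}\bigl(\rho_{(0,\infty)}(\FT_{(\infty,0)}\istar\FF,\FT_{(0,\infty)}\istar\GGG)\bigr)$. Since $\FT_{(\infty,0)}\istar\FF$ has slope $\frac{a}{1-a}$, its invariant $\lceil((a/(1-a))^{-1})\rceil=\lceil a^{-1}\rceil-1$ is strictly smaller, so the inductive hypothesis applies to the inner $\rho_{(0,\infty)}$; rewriting it as $\FT_{(0,\infty)}\istar\rho_{(0,\infty)}(\FT_{(\infty,0)}\istar\FF,\GGG)$ and again using that $\istar$ is involutive reduces the whole expression to $\rho_{(0,\infty)}(\FT_{(\infty,0)}\istar\FF,\GGG)$, whose image under $\istar\FT_{(0,\infty)}^{-1}$ is $\rho_{(0,\infty)}(\FF,\GGG)$ by definition. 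The only subtle point is verifying that the induction base is covered by the two completed cases $a\geq 1$, which is clear because the iteration $a\mapsto a/(1-a)$ eventually exceeds $1$.
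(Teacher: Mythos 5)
Your proof is correct and follows essentially the same route as the paper's: reduce by additivity to a single slope $a$, handle $a\geq 1$ directly from the three-case definition of $\rho_{(0,\infty)}$ together with Lemmas \ref{cancel} and \ref{cancel2}, and induct on $\lceil a^{-1}\rceil$ when $a<1$. You in fact give more detail than the paper for the $a\geq1$ cases; the only (harmless) slip is in the $a>1$ case, where just one of the two $\istar\FT_{(0,\infty)}^{-1}$'s cancels against the pulled-out $\FT_{(0,\infty)}\istar$ — the other is absorbed into the definition of $\rho_{(0,\infty)}(\FF,\GGG)$.
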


\begin{proof}
 We can assume that $\FF$ has a single slope $a$. If $a\geq 1$ then it is a straightforward consequence of the definition of $\rho_{(0,\infty)}$ and lemmas \ref{cancel} and \ref{cancel2}. If $a<1$ we have, by induction on $\lceil a^{-1}\rceil-1$,
\begin{align*}
&\istar\FT^{-1}_{(0,\infty)}(\rho_{(0,\infty)}(\FF,\FT_{(0,\infty)}\istar\GGG))=
\\
&=\istar\FT^{-1}_{(0,\infty)}(\istar\FT^{-1}_{(0,\infty)}(\rho_{(0,\infty)}(\FT_{(\infty,0)}\istar\FF,\FT_{(0,\infty)}\istar\GGG)))\cong
\\
&\cong \istar\FT^{-1}_{(0,\infty)}(\rho_{(0,\infty)}(\FT_{(\infty,0)}\istar\FF,\GGG))=\rho_{(0,\infty)}(\FF,\GGG).\end{align*}
\end{proof}

Before proceeding with the proof of theorem \ref{infinity} we need an extra technical result

\begin{lem}
 For every $K\in\PPP$ there exists some integer $n\geq 0$ such that $\Phi^n(K)$ is in $\mathcal T$.
\end{lem}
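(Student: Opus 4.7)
The plan is to exploit the formula \eqref{FT}, rewriting $\Phi(K)\cong j^\star\FT^\psi(j_!\istar K)$, and to use Theorem \ref{LFTT} to control the two obstructions to membership in $\mathcal T$: the wild part $\Phi^n(K)_{(0)}^w$ of the local monodromy at $0$, and the singular set $S(\Phi^n(K))\subseteq\bar k^\star$. Since $j_!$ does not alter the $I_\infty$-representation attached to an object of $\Dbc(\Gmk,\QQ)$ and $\istar$ identifies $I_0$ with $I_\infty$, applying LFTT to the object $L:=j_!\istar K$ on $\AAA^1_{\bar k}$ yields
$$
\Phi(K)_{(0)}^{w}\cong\FT_{(\infty,0)}(\istar K_{(0)}^{w}),\qquad \Phi(K)_{(t)}\cong\FT_{(\infty,t)}(\istar K_{(0)}^{w})\ \text{for}\ t\in\bar k^\star.
$$
Thus both quantities one wishes to kill depend only on $K_{(0)}^{w}$.

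The next step is to iterate. By property (3) of Theorem \ref{LFTT}, $\FT_{(\infty,0)}$ annihilates single-slope $I_\infty$-representations of slope $\geq 1$ and sends one of slope $a<1$ to slope $\frac{a}{1-a}$; by property (4), $\FT_{(\infty,t)}$ with $t\in\bar k^\star$ vanishes on any representation with no slope-$1$ component. Therefore, in the sequence $K_n:=\Phi^n(K)$, the multiset of slopes of $K_n^{w}_{(0)}$ is obtained from that of $K_{(0)}^w$ by repeatedly applying $a\mapsto\frac{a}{1-a}$ and discarding values $\geq 1$. The elementary computation $a_k=\frac{a_0}{1-ka_0}$ shows that any initial slope $a_0>0$ becomes $\geq 1$ after at most $\lceil 1/a_0\rceil$ steps and disappears the following step; since $K_{(0)}^{w}$ carries only finitely many slopes, there exists some finite $N\geq 0$ with $\Phi^N(K)_{(0)}^{w}=0$.

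Once this vanishing is reached, the two isomorphisms above applied with $K$ replaced by $\Phi^N(K)$ give $\Phi^{N+1}(K)_{(0)}^{w}=\FT_{(\infty,0)}(0)=0$ and $\Phi^{N+1}(K)_{(t)}=\FT_{(\infty,t)}(0)=0$ for every $t\in\bar k^\star$. Hence $\Phi^{N+1}(K)$ is smooth on $\Gmk$ and tame at $0$, i.e.\ $\Phi^{N+1}(K)\in\mathcal T$.

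The only genuinely delicate point I anticipate lies in the first step, namely in identifying $(j_!\istar K)_{(\infty)}^w$ with $\istar K_{(0)}^w$ as $I_\infty$-representations and in checking that the LFTT formulas survive restriction via $j^\star$ and extension via $j_!$ (where $j_!\istar K$ need not itself be perverse on $\AAA^1_{\bar k}$); everything downstream is a purely numerical iteration of the slope-transformation rule.
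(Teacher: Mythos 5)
Your proof is correct and follows essentially the same route as the paper: iterate $\Phi$, use LFTT to see that the wild part of the monodromy at $0$ of $\Phi(K)$ depends only on $K_{(0)}^w$ with slopes transforming by $a\mapsto a/(1-a)$ and slopes $\geq 1$ being killed, then conclude by finiteness and apply $\Phi$ once more to get smoothness. The paper merely packages the termination step as an induction on $d(K)=\lceil s^{-1}\rceil$ for $s$ the smallest positive slope at $0$, which is the same computation as your $a_k=a_0/(1-ka_0)$; the ``delicate point'' you flag is already absorbed into the paper's definition of $\FT^\psi$ on $\PPP$ and the way Theorem \ref{LFTT} is stated there.
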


\begin{proof}
 We proceed by induction on $d(K):=\lceil s^{-1}\rceil$, where $s$ is the infimum of the set of positive slopes of $K$ at $0$. If $d(K)=0$ then $K$ is tame at $0$, so $\Phi(K)\in{\mathcal T}$ by LFTT. If $d(K)=1$ then all slopes of $K$ at $0$ are $\geq 1$, so $\istar K$ has no $\infty$-slope in the interval $(0,1)$ and $\Phi(K)$ is tame at $0$ by LFTT. Therefore $\Phi(\Phi(K))=\Phi^2(K)\in{\mathcal T}$.

Otherwise, let $s=s_1 <\cdots < s_m<1$ be the slopes of $K$ at $0$ which are $<1$. Then the slopes of $\Phi(K)$ at $0$ are $\frac{s_1}{1-s_1}<\cdots <\frac{s_m}{1-s_m}$ by LFTT, so $d(\Phi(K))=\lceil\frac{1-s_1}{s_1}\rceil=\lceil s^{-1}-1\rceil=d(K)-1$. By induction hypothesis, there is some $n\geq 0$ such that $\Phi^n(\Phi(K))=\Phi^{n+1}(K)$ is in $\mathcal T$.
\end{proof}

\begin{proof}[Proof of theorem \ref{infinity}]
We proceed by induction on $m+n$, where $m$ (respectively $n$) is the smallest non-negative integer  such that $\Phi^m(K)$ (resp. $\Phi^n(L)$) is in $\mathcal T$. If $m+n=0$ then both $K$ and $L$ are in $\mathcal T$, so the result follows from lemma \ref{inT}. If $m+n>1$, we may assume by the commutativity of the convolution that $m>1$. Then $K\ast L\cong\Psi(\Phi(K\ast L))\cong\Psi(\Phi(K)\ast L)$. By induction hypothesis, we have 
\begin{align*}
(\Phi(K)\ast L)^w_{(\infty)}&\cong\rho_{(\infty,\infty)}(\Phi(K)_{(\infty)}^w,L_{(\infty)}^w)\oplus\rho_{(0,\infty)}(\Phi(K)_{(0)}^w,L_{(\infty)}^w)\oplus
\\
&\oplus\rho_{(\infty,0)}(\Phi(K)_{(\infty)}^w,L_{(0)}^w)\oplus\left(\bigoplus_{s\in S(\Phi(K))}\rho_{(s,\infty)}(\Phi(K)_{(s)},L_{(\infty)}^w)\right)\oplus
\\
&\oplus\left(\bigoplus_{t\in S(L)}\rho_{(\infty,t)}(\Phi(K)_{(\infty)}^w,L_{(t)})\right)
\end{align*}
so
\begin{align*}
(K\ast L)^w_{(\infty)}&=\istar\FT_{(0,\infty)}^{-1}(\Phi(K)\ast L)^w_{(\infty)}\cong
\\
&\cong \istar\FT_{(0,\infty)}^{-1}(\rho_{(\infty,\infty)}(\Phi(K)_{(\infty)}^w,L_{(\infty)}^w))\oplus \istar\FT_{(0,\infty)}^{-1}(\rho_{(0,\infty)}(\Phi(K)_{(0)}^w,L_{(\infty)}^w))\oplus
\\
&\oplus \istar\FT_{(0,\infty)}^{-1}(\rho_{(\infty,0)}(\Phi(K)_{(\infty)}^w,L_{(0)}^w))\oplus
\\
&\oplus\left(\bigoplus_{s\in S(\Phi(K))}\istar\FT_{(0,\infty)}^{-1}(\rho_{(s,\infty)}(\Phi(K)_{(s)},L_{(\infty)}^w))\right)\oplus
\\
&\oplus\left(\bigoplus_{t\in S(L)}\istar\FT_{(0,\infty)}^{-1}(\rho_{(\infty,t)}(\Phi(K)_{(\infty)}^w,L_{(t)}))\right).
\end{align*}

We will analyze these terms one by one. The first one is, by LFTT,
\begin{align*}
&\istar\FT_{(0,\infty)}^{-1}(\rho_{(\infty,\infty)}(\Phi(K)_{(\infty)}^w,L_{(\infty)}^w))=
\\
&=\istar\FT_{(0,\infty)}^{-1}(\rho_{(\infty,\infty)}(\FT_{(0,\infty)}\istar K_{(\infty)}^w,L_{(\infty)}^w))\oplus
\\
&\oplus \istar\FT_{(0,\infty)}^{-1}(\rho_{(\infty,\infty)}(\FT_{(\infty,\infty)}\istar K_{(0)}^{w,>1},L_{(\infty)}^w)) \oplus
\\
&\oplus\left(\bigoplus_{s\in S(K)}\istar\FT_{(0,\infty)}^{-1}(\rho_{(\infty,\infty)}(\FT_{(s^{-1},\infty)}\istar K_{(s)},L_{(\infty)}^w))\right)=
\\
&=\rho_{(\infty,\infty)}(K_{(\infty)}^w,L_{(\infty)}^w)\oplus\rho_{(0,\infty)}(K_{(0)}^{w,>1},L_{(\infty)}^w)\oplus\left(\bigoplus_{s\in S(K)}\rho_{(s,\infty)}(K_{(s)},L_{(\infty)}^w)\right)
\end{align*}
by lemma \ref{cancel} and the definitions of $\rho_{(0,\infty)}$ and $\rho_{(s,\infty)}$. For the second term, taking into account that $\Phi$ maps the slope $s<1$ part of the monodromy at $0$ of $K$ to the slope $\frac{s}{1-s}$ part of the monodromy at $0$ of $\Phi(K)$, we get
\begin{align*}
&\istar\FT_{(0,\infty)}^{-1}(\rho_{(0,\infty)}(\Phi(K)_{(0)}^{w},L_{(\infty)}^w))=
\\
&=\istar\FT_{(0,\infty)}^{-1}(\rho_{(0,\infty)}(\FT_{(\infty,0)}\istar K_{(0)}^{w,<1},L_{(\infty)}^w))=
\rho_{(0,\infty)}(K_{(0)}^{w,<1},L_{(\infty)}^w)
\end{align*}
by definition of $\rho_{(0,\infty)}$, where $K_{(0)}^{w,<1}$ denotes the part of positive slopes $<1$ of the monodromy of $K$ at $0$.

For the third term we split $L_{(0)}^w$ as a direct sum of its slopes $>1$ part $L_{(0)}^{w,>1}$, its slope $1$ part $L_{(0)}^{w,1}$ and its slopes $<1$ part $L_{(0)}^{w,<1}$. For the first one we have
\begin{align*}
&\istar\FT_{(0,\infty)}^{-1}(\rho_{(\infty,0)}(\Phi(K)_{(\infty)}^w,L_{(0)}^{w,>1}))=
\\
&=\istar\FT_{(0,\infty)}^{-1}(\istar\FT_{(0,\infty)}^{-1}(\rho_{(\infty,\infty)}(\Phi(K)_{(\infty)}^w,\FT_{(\infty,\infty)}\istar L_{(0)}^{w,>1}))).
\end{align*}
Now, using that $$\Phi(K)_{(\infty)}^w\cong\FT_{(0,\infty)}\istar K_{(\infty)}^w\oplus\FT_{(\infty,\infty)}\istar K_{(0)}^{w,>1}\oplus\bigoplus_{s\in S(K)}(\FT_{(s^{-1},\infty)}\istar K_{(s)})$$ we get
\begin{align*}
&\istar\FT_{(0,\infty)}^{-1}(\istar\FT_{(0,\infty)}^{-1}(\rho_{(\infty,\infty)}(\FT_{(0,\infty)}\istar K_{(\infty)}^w,\FT_{(\infty,\infty)}\istar L_{(0)}^{w,>1})))\oplus
\\
&\oplus \istar\FT_{(0,\infty)}^{-1}(\istar\FT_{(0,\infty)}^{-1}(\rho_{(\infty,\infty)}(\FT_{(\infty,\infty)}\istar K_{(0)}^{w,>1},\FT_{(\infty,\infty)}\istar L_{(0)}^{w,>1})))\oplus
\\
&\oplus \bigoplus_{s\in S(K)} \istar\FT_{(0,\infty)}^{-1}(\istar\FT_{(0,\infty)}^{-1}(\rho_{(\infty,\infty)}(\FT_{(s^{-1},\infty)}\istar K_{(s)},\FT_{(\infty,\infty)}\istar L_{(0)}^{w,>1}))).
\end{align*}
Notice that the second term in the direct sum vanishes: since both $\FT_{(\infty,\infty)}\istar K_{(0)}^{w,>1}$ and $\FT_{(\infty,\infty)}\istar L_{(0)}^{w,>1}$ have slopes $>1$, their $\rho_{(\infty,\infty)}$ has slopes $>\frac{1}{2}$ by proposition \ref{slopes1}. Then applying $\istar\FT_{(0,\infty)}^{-1}$ once gives slopes $>1$, so it vanishes when applying $\istar\FT_{(0,\infty)}^{-1}$ again. Similarly, the third term also vanishes (in this case $\FT_{(s^{-1},\infty)}\istar K_{(s)}$ has slope $1$). We conclude that
\begin{align*}
&\istar\FT_{(0,\infty)}^{-1}(\rho_{(\infty,0)}(\Phi(K)_{(\infty)}^w,L_{(0)}^{w,>1}))=
\\
&=\istar\FT_{(0,\infty)}^{-1}(\istar\FT_{(0,\infty)}^{-1}(\rho_{(\infty,\infty)}(\FT_{(0,\infty)}\istar K_{(\infty)}^w,\FT_{(\infty,\infty)}\istar L_{(0)}^{w,>1})))=
\\
&=\istar\FT_{(0,\infty)}^{-1}(\rho_{(\infty,\infty)}( K_{(\infty)}^w,\FT_{(\infty,\infty)}\istar L_{(0)}^{w,>1}))=\rho_{(\infty,0)}(K_{(\infty)}^w,L_{(0)}^{w,>1})
\end{align*}
by lemma \ref{cancel}. For the slope $1$ part of $L_{(0)}$ we get
\begin{align*}
&\istar\FT_{(0,\infty)}^{-1}(\rho_{(\infty,0)}(\Phi(K)_{(\infty)}^w,L_{(0)}^{w,1}))=
\\
&=\istar\FT_{(0,\infty)}^{-1}(\rho_{(\infty,0)}(\Phi(K)_{(\infty)}^{w,<1},L_{(0)}^{w,1}))=
\\
&=\bigoplus_{j=1}^r \istar\FT_{(0,\infty)}^{-1}(\istar\FT_{(0,\infty)}^{-1}(\rho_{(\infty,t_j)}(\FT_{(0,\infty)}\istar K_{(\infty)}^w,\FT_{(\infty,0)}\istar\FF_j)))
\end{align*}
if $L_{(0)}^{w,1}=\bigoplus_{j=1}^r\FF_j\otimes \istar\LL_{t_j}$ with all $\FF_j$ having slopes $<1$, since the functor $\rho_{(\infty,0)}(-,L_{(0)}^{w,1})$ vanishes for representations with slopes $\geq 1$ by proposition \ref{slopes3}. Now by lemma \ref{cancel2} we have
\begin{align*}
&\bigoplus_{j=1}^r \istar\FT_{(0,\infty)}^{-1}(\istar\FT_{(0,\infty)}^{-1}(\rho_{(\infty,t_j)}(\FT_{(0,\infty)}\istar K_{(\infty)}^w,\FT_{(\infty,0)}\istar\FF_j)))\cong
\\
&\cong\bigoplus_{j=1}^r \istar\FT_{(0,\infty)}^{-1}(\rho_{(\infty,t_j)}(K_{(\infty)}^w,\FT_{(\infty,0)}\istar\FF_j))=\rho_{(\infty,0)}(K_{(\infty)}^w,L_{(0)}^{w,1}).
\end{align*}
For the slope $<1$ part of $L_{(0)}^w$ we have, recursively,
\begin{align*}
&\istar\FT_{(0,\infty)}^{-1}(\rho_{(\infty,0)}(\Phi(K)_{(\infty)}^w,L_{(0)}^{w,<1}))=
\\
&=\istar\FT_{(0,\infty)}^{-1}(\istar\FT_{(0,\infty)}^{-1}(\rho_{(\infty,0)}(\Phi(K)_{(\infty)}^w,\FT_{(\infty,0)}\istar L_{(0)}^{w,<1})))=
\\
&=\istar\FT_{(0,\infty)}^{-1}(\rho_{(\infty,0)}(K^w_{(\infty)},\FT_{(\infty,0)}\istar L_{(0)}^{w,<1}))=\rho_{(\infty,0)}(K^w_{(\infty)},L_{(0)}^{w,<1}).
\end{align*}
We conclude that the third term is equal to
$$
\rho_{(\infty,0)}(K^w_{(\infty)},L_{(0)}^{w,>1})\oplus\rho_{(\infty,0)}(K^w_{(\infty)},L_{(0)}^{w,1})\oplus\rho_{(\infty,0)}(K^w_{(\infty)},L_{(0)}^{w,<1})=\rho_{(\infty,0)}(K^w_{(\infty)},L_{(0)}^{w}).
$$

We move now to the fourth term. The set $S(\Phi(K))\subseteq\bar k^\star$ of points where $\Phi(K)=\FT(\istar K)$ is not smooth is precisely the set of $s_j$ appearing in the decomposition $K_{(0)}^{w,1}\cong\bigoplus_{j=1}^r \FF_j\otimes \istar\LL_{\psi_{s_j}}$ with $\FF_j$ having slopes $<1$ \cite[Lemma 8.5.7]{katz1988gauss}. For every such $s_j$, 
$$
\istar\FT_{(0,\infty)}^{-1}(\rho_{(s_j,\infty)}(\Phi(K)_{(s_j)},L_{(\infty)}^w))=\istar\FT_{(0,\infty)}^{-1}(\rho_{(s_j,\infty)}(\FT_{(\infty,0)}\istar\FF_j,L_{(\infty)}^w)),
$$
so
\begin{align*}
&\bigoplus_{s\in S(\Phi(K))}\istar\FT_{(0,\infty)}^{-1}(\rho_{(s,\infty)}(\Phi(K)_{(s)},L_{(\infty)}^w))=
\\
&=\bigoplus_{j=1}^r \istar\FT_{(0,\infty)}^{-1}(\rho_{(s_j,\infty)}(\FT_{(\infty,0)}\istar\FF_j,L_{(\infty)}^w))=\rho_{(0,\infty)}(K_{(0)}^{w,1},L_{(\infty)}^w)
\end{align*}
by definition of $\rho_{(0,\infty)}$.

The last term is the direct sum, for $t\in S(L)$, of
\begin{align*}
&\istar\FT_{(0,\infty)}^{-1}(\rho_{(\infty,t)}(\Phi(K)_{(\infty)}^w,L_{(t)}))=
\\
&=\istar\FT_{(0,\infty)}^{-1}(\rho_{(\infty,t)}(\FT_{(0,\infty)}\istar K_{(\infty)}^w,L_{(t)}))\oplus
\\
&\oplus \istar\FT_{(0,\infty)}^{-1}(\rho_{(\infty,t)}(\FT_{(\infty,\infty)}\istar K_{(0)}^{w,>1},L_{(t)})) \oplus
\\
&\oplus\left(\bigoplus_{s\in S(K)}\istar\FT_{(0,\infty)}^{-1}(\rho_{(\infty,t)}(\FT_{(s^{-1},\infty)}\istar K_{(s)},L_{(t)}))\right).
\end{align*}
The last two summands vanish, since by proposition \ref{slopes2} $\rho_{(\infty,t)}(\FF,L_{(t)})$ has the same slopes as $\FF$, so its $\FT_{(0,\infty)}^{-1}$ is $0$ if these slopes are $\geq 1$. Therefore the last term comes down to
$$
\istar\FT_{(0,\infty)}^{-1}(\rho_{(\infty,t)}(\FT_{(0,\infty)}\istar K_{(\infty)}^w,L_{(t)}))=\rho_{(\infty,t)}(K_{(\infty)}^w,L_{(t)})
$$
by lemma \ref{cancel2}. The theorem is proved by taking the direct sum of all pieces and using the bi-exactness of the functors $\rho_{(-,-)}$.
\end{proof}

Using the fact that $\istar(K\ast L)\cong(\istar K)\ast(\istar L)$ for any $K,L\in\PPP$, Theorem \ref{infinity} immediately implies

\begin{thm}\label{zero}
 There exist bi-exact functors $\bar\rho_{(0,\infty)}:\RR^w_0\times\RR^w_\infty\to\RR^w_0$, $\bar\rho_{(0,0)}:\RR^w_0\times\RR^w_0\to\RR^w_0$ and $\bar\rho_{(t,0)}:\RR_t\times\RR^w_0\to\RR^w_0$ for $t\in\bar k^\star$ such that, if we define $\bar\rho_{(\infty,0)}(\FF,\GGG):=\bar\rho_{(0,\infty)}(\GGG,\FF)$ and $\bar\rho_{(0,t)}(\FF,\GGG)=\bar\rho_{(t,0)}(\GGG,\FF)$, for every $K,L\in{\mathcal P}$ there is an isomorphism of $I_0$-representations
\begin{align*}
(K\ast L)_{(0)}^{w}&\cong\bar\rho_{(0,0)}(K_{(0)}^w,L_{(0)}^w)\oplus\bar\rho_{(0,\infty)}(K_{(0)}^w,L_{(\infty)}^w)\oplus\bar\rho_{(\infty,0)}(K_{(\infty)}^w,L_{(0)}^w)\oplus
\\
&\oplus\left(\bigoplus_{s\in S(K)}\bar\rho_{(s,0)}(K_{(s)},L_{(0)}^w)\right)\oplus\left(\bigoplus_{t\in S(L)}\bar\rho_{(0,t)}(K_{(0)}^w,L_{(t)})\right).
\end{align*}
\end{thm}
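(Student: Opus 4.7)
The plan is to deduce Theorem \ref{zero} directly from Theorem \ref{infinity} by means of the inversion automorphism $\iota:\Gmk\to\Gmk$, $t\mapsto t^{-1}$. The starting point, already flagged in the excerpt, is the canonical isomorphism $\istar(K\ast L)\cong(\istar K)\ast(\istar L)$, which holds because $\iota$ is a group automorphism and therefore commutes with the convolution bifunctor. Geometrically $\iota$ interchanges $0$ and $\infty$ and sends $s\in\bar k^\star$ to $s^{-1}$, so it induces exact equivalences of categories $\istar:\RR_0\isomto\RR_\infty$, $\istar:\RR_\infty\isomto\RR_0$, and $\istar:\RR_s\isomto\RR_{s^{-1}}$, restricting to equivalences between the wild subcategories. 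Under these equivalences one has $(\istar K)^w_{(\infty)}\cong\istar K^w_{(0)}$, $(\istar K)^w_{(0)}\cong\istar K^w_{(\infty)}$, $(\istar K)_{(s)}\cong\istar K_{(s^{-1})}$ for $s\in\bar k^\star$, and $S(\istar K)=\iota(S(K))$.

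With these translations in hand I would define the desired functors by conjugating those of Theorem \ref{infinity} with $\istar$:
\begin{align*}
\bar\rho_{(0,0)}(\FF,\GGG) &:= \istar\rho_{(\infty,\infty)}(\istar\FF,\istar\GGG),\\
\bar\rho_{(0,\infty)}(\FF,\GGG) &:= \istar\rho_{(\infty,0)}(\istar\FF,\istar\GGG),\\
\bar\rho_{(t,0)}(\FF,\GGG) &:= \istar\rho_{(t^{-1},\infty)}(\istar\FF,\istar\GGG),\quad t\in\bar k^\star.
\end{align*}
Bi-exactness of each $\bar\rho_{(-,-)}$ follows immediately from the bi-exactness of the corresponding $\rho_{(-,-)}$ and the fact that $\istar$ is an exact auto-equivalence. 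The symmetry conventions $\bar\rho_{(\infty,0)}(\FF,\GGG)=\bar\rho_{(0,\infty)}(\GGG,\FF)$ and $\bar\rho_{(0,t)}(\FF,\GGG)=\bar\rho_{(t,0)}(\GGG,\FF)$ that the statement imposes are then forced by, and agree with, the analogous symmetries built into Theorem \ref{infinity}.

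To verify the decomposition, apply Theorem \ref{infinity} to the pair $\istar K,\istar L$, expanding $((\istar K)\ast(\istar L))^w_{(\infty)}$ into five summands. In each summand rewrite the local monodromies of $\istar K$ and $\istar L$ in terms of those of $K$ and $L$ via the dictionary above, and reindex the finite direct sums by replacing $s\in S(\istar K)$ with $s^{-1}\in S(K)$ (and similarly for $L$). Now apply $\istar$ to both sides: on the left one obtains $\istar\bigl((\istar(K\ast L))^w_{(\infty)}\bigr)\cong(K\ast L)^w_{(0)}$, using $\istar(K\ast L)\cong(\istar K)\ast(\istar L)$ and $(\istar M)^w_{(\infty)}\cong\istar M^w_{(0)}$, while on the right one recovers term by term exactly the decomposition claimed in the statement, by the very definition of the $\bar\rho_{(-,-)}$.

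The argument is essentially pure bookkeeping, so there is no real obstacle; the only point demanding a moment's care is checking that inversion commutes with the formation of the local monodromy functors $(-)^w_{(0)},(-)^w_{(\infty)},(-)_{(t)}$ and that $S(\istar K)=\iota(S(K))$. Both are immediate because these invariants are defined from henselizations at the relevant closed points, and $\iota$ induces isomorphisms of the corresponding henselian local rings that swap the roles of $0$ and $\infty$.
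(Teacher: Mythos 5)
Your proposal is correct and is exactly the route the paper takes: the paper derives Theorem \ref{zero} from Theorem \ref{infinity} by the single observation that $\istar(K\ast L)\cong(\istar K)\ast(\istar L)$, and your argument merely makes explicit the conjugated definitions $\bar\rho_{(-,-)}=\istar\circ\rho_{(-,-)}\circ(\istar\times\istar)$ and the bookkeeping that the paper leaves implicit.
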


The functors $\bar\rho_{(-,-)}$ have the same effect on the slopes and dimensions as their $\rho_{(-,-)}$ counterparts.

\section{Local monodromy at finite points of a convolution}

The finite points version of the main theorem is the following

\begin{thm}\label{finite}
 For every $u\in\bar k^\star$ there exist bi-exact functors $\rho_{(0,\infty)}^{(u)}:\RR^w_0\times\RR^w_\infty\to\RR_u$ and $\rho_{(s,t)}^{(u)}:\RR_s\times\RR_t\to\RR_u$ for $s,t\in\bar k^\star$ with $st=u$ such that, if we define $\rho_{(\infty,0)}^{(u)}(\FF,\GGG):=\rho_{(0,\infty)}^{(u)}(\GGG,\FF)$, for every $K,L\in{\mathcal P}$ there is an isomorphism of $I_u$-representations
$$
(K\ast L)_{(u)}\cong\rho_{(0,\infty)}(K_{(0)}^w,L_{(\infty)}^w)\oplus\rho_{(\infty,0)}(K_{(\infty)}^w,L_{(0)}^w)\oplus\bigoplus_{st=u}\rho_{(s,t)}^{(u)}(K_{(s)},L_{(t)}).
$$
\end{thm}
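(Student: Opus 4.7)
The plan is to deduce Theorem \ref{finite} from Theorem \ref{infinity} via a Fourier transform argument, rather than repeating the $\Phi$-induction from scratch. The key observation is that for $M\in\PPP$ and $u\in\bar k^\star$, the local monodromy $(\Psi M)_{(u)}$ at a finite point is determined by $M_{(\infty)}^{w}$ alone: using the description $\Psi(M) = \istar j^\star \FT^{\bar\psi}(j_! M)$ from Lemma \ref{equivalence} and Theorem \ref{LFTT} part~(3) applied to $\FT^{\bar\psi}$, one obtains $(\Psi M)_{(u)} \cong \istar \FT^{\bar\psi}_{(\infty,u^{-1})} M_{(\infty)}^{w}$. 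Applied to $M = \Phi(K)\ast L$ together with $K\ast L \cong \Psi(\Phi(K)\ast L)$, this yields
\[
(K\ast L)_{(u)} \cong \istar \FT^{\bar\psi}_{(\infty,u^{-1})}\bigl((\Phi(K)\ast L)_{(\infty)}^{w}\bigr),
\]
so the left-hand side is completely determined by the data furnished by Theorem \ref{infinity}.

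The next step is to expand $(\Phi(K)\ast L)_{(\infty)}^{w}$ via Theorem \ref{infinity} into its five groups of summands, and then to express each piece of $\Phi(K)$'s local monodromy ($\Phi(K)_{(\infty)}^{w}$, $\Phi(K)_{(0)}^{w}$, and $\Phi(K)_{(s)}$ for $s\in S(\Phi(K))$) in terms of those of $K$ using LFTT as in the proof of Theorem \ref{infinity}. This produces a large double sum of terms, each tagged by its ``origin'' in $K$ and $L$. The local convolution functors $\rho_{(s,t)}^{(u)}$ for $s,t\in\bar k^\star$ with $st=u$, together with $\rho_{(0,\infty)}^{(u)}$ and $\rho_{(\infty,0)}^{(u)}$, are then \emph{defined} precisely so that the resulting expression takes the form predicted by the theorem. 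Equivalently, for $s,t\in\bar k^\star$ one can define $\rho_{(s,t)}^{(u)}$ directly by the vanishing-cycles construction analogous to Katz's $\rho_{(\infty,\infty)}$ (using the strict henselization of $\AAA^2_{\bar k}$ at $(s,t)$ mapping via multiplication to that of $\AAA^1_{\bar k}$ at $u$), and check the equivalence of the two descriptions by the argument of Lemma \ref{inT}.

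The bulk of the work is then to verify that the substitution actually collapses to the stated decomposition. By Theorem \ref{LFTT} part~(4), $\FT^{\bar\psi}_{(\infty,u^{-1})}$ annihilates any representation whose slopes are $\neq 1$, and on the slope-$1$ part it only sees the component with a specific Artin--Schreier twist. Using Propositions \ref{slopes1}, \ref{slopes2}, \ref{slopes3} to compute the slopes of every $\rho$-term in the expansion of $(\Phi(K)\ast L)_{(\infty)}^{w}$, the vast majority of subterms vanish, and the surviving ones can be matched with the $\rho^{(u)}$-functors on the right-hand side using cancellation identities of the type proved in Lemmas \ref{cancel}, \ref{cancel2}, \ref{cancel3} (applied with $\istar\FT^{\bar\psi}_{(\infty,u^{-1})}$ in the outermost position).

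The principal obstacle will be the combinatorial bookkeeping. The expansion yields several dozen subterms, coming from specific slope ranges of $K_{(0)}^{w}$, $K_{(\infty)}^{w}$, $L_{(0)}^{w}$, $L_{(\infty)}^{w}$ (split by the trichotomy of slope $<1$, $=1$, $>1$ that already structured the proof of Theorem \ref{infinity}), or from the punctual pieces $K_{(s)}$, $L_{(t)}$. The crucial check is that the Artin--Schreier twist selected by $\FT^{\bar\psi}_{(\infty,u^{-1})}$ enforces exactly the product condition $st=u$ on each surviving term, so that non-matching contributions cancel and the result reassembles into the announced decomposition.
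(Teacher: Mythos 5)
Your plan is correct and follows essentially the same route as the paper: reduce to Theorem \ref{infinity} via $K\ast L\cong\Psi(\Phi(K)\ast L)$, extract the slope-$1$ part of $(\Phi(K)\ast L)^w_{(\infty)}$ with $\istar\FT^{-1}_{(u^{-1},\infty)}$, kill the non-slope-$1$ terms using Propositions \ref{slopes1}--\ref{slopes3}, enforce $st=u$ via the Artin--Schreier twist selected by the local Fourier transform, and reassemble the survivors with Lemmas \ref{cancel}--\ref{cancel3} into functors defined exactly as compositions of the $\rho_{(-,\infty)}$'s with local Fourier transforms. The remaining bookkeeping you defer is precisely what the paper's proof carries out.
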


\begin{lem}
 Let $\FF,\GGG\in{\mathcal R}_\infty$ be two representations with all slopes $<1$, and let $a,b\in\bar k^\star$. Then $\istar\FT_{(0,\infty)}^{-1}(\rho_{(\infty,\infty)}(\LL_{\psi_a}\otimes\FF,\LL_{\psi_b}\otimes\GGG))\cong\LL_{\psi_{ab}}\otimes{\mathcal H}$ for some ${\mathcal H}\in{\mathcal R}_\infty$ with all slopes $<1$.
\end{lem}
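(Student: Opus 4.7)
The plan is to realize the representations geometrically and reduce the statement to a slope estimate on a concrete sheaf. By \cite[Theorem 1.5.6]{katz1986local}, lift $\FF,\GGG$ to smooth sheaves $\bar\FF,\bar\GGG$ on $\Gmk$, tame at $0$, whose $\infty$-monodromies are $\FF,\GGG$. Then $A[1]:=(\bar\FF\otimes\LL_{\psi(ax)})[1]$ and $B[1]:=(\bar\GGG\otimes\LL_{\psi(bx)})[1]$ lie in $\CC$ and realize $\LL_{\psi_a}\otimes\FF$ and $\LL_{\psi_b}\otimes\GGG$ at $\infty$. By Lemma \ref{inT} and Proposition \ref{slopes}, $K:=A[1]\ast B[1]\in\CC$ has $\infty$-monodromy $\rho_{(\infty,\infty)}(\LL_{\psi_a}\otimes\FF,\LL_{\psi_b}\otimes\GGG)$ of single slope $1/2$, and LFTT combined with Lemma \ref{equivalence} identifies the $\infty$-monodromy of $\Psi(K)$ with the target
$$M:=\istar\FT_{(0,\infty)}^{-1}\rho_{(\infty,\infty)}(\LL_{\psi_a}\otimes\FF,\LL_{\psi_b}\otimes\GGG).$$

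By Katz's slope-$1$ decomposition \cite[Lemma 8.5.7]{katz1988gauss}, proving the lemma is equivalent to showing that $M\otimes\LL_{\psi_{-ab}}$ has all slopes $<1$, or equivalently that the sheaf $T:=\Psi(K)\otimes\LL_{\bar\psi(abt)}$ on $\Gmk$ has all its $\infty$-slopes strictly less than $1$. Associativity of convolution gives $\Psi(K)=A[1]\ast\Psi(B[1])$, and the Fourier translation identity $\FT^{\bar\psi}(j_!(L\otimes\LL_{\psi(bx)}))\cong\tau_{-b}^\star\FT^{\bar\psi}(j_!L)$ combined with LFTT for $\bar\GGG$ shows that $\Psi(B[1])$ is smooth on $\Gmk\setminus\{b^{-1}\}$, tame at $0$ and $\infty$, and wild at $b^{-1}$ with local monodromy $\istar\FT_{(\infty,0)}\GGG$. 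Consider the isomorphism $\phi:\Gmk\times(\AAA^1\setminus\{-1\})\isomto\Gmk\times\Gmk$, $(t,s)\mapsto(bt/(1+s),b^{-1}(1+s))$, under which $\mu\circ\phi$ is the projection $\pi$ to the first factor. Factoring $\psi(abt/(1+s))=\psi(abt)\cdot\psi(-abts/(1+s))$ cancels the twist $\LL_{\bar\psi(abt)}$ exactly against the corresponding factor inside $\phi^\star A$, so the projection formula yields
$$T\cong R\pi_!\bigl(\phi^\star\bar\FF\otimes\LL_{\psi(-abts/(1+s))}\otimes\phi^\star\Psi(B[1])\bigr).$$

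The crux is then to bound the $\infty$-slopes of $T$. The key geometric fact is that $-abts/(1+s)$ vanishes precisely along $\{s=0\}$, which is the wild singular locus of $\phi^\star\Psi(B[1])$; away from this locus the sheaf $\phi^\star\Psi(B[1])$ is lisse and the slope-$1$ Artin--Schreier contributions from fibres with $s\ne 0$ cancel in the pushforward through a vanishing-cycles calculation at $t=\infty$ analogous to the one in Lemma \ref{inT}. The wild $\infty$-monodromy of $T$ is thereby determined by the local behavior near $(t,s)=(\infty,0)$, where $\phi^\star\bar\FF$ specializes to $[b]^\star\bar\FF$ whose $\infty$-slopes $<1$ are inherited from $\FF$. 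A Swan conductor estimate parallel to Lemma \ref{lema2} then gives $\Swan_\infty(T)<\mathrm{rk}(T)$ with all slopes strictly less than $1$, establishing $M\cong\LL_{\psi_{ab}}\otimes\HHH$ with $\HHH\in\RR_\infty$ of slopes $<1$. The main obstacle is precisely this final slope estimate, where controlling the cancellation between the various pure slope-$1$ contributions from the $s\ne 0$ fibres is the delicate point.
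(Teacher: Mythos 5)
Your setup is the same as the paper's: realize $\FF,\GGG$ by smooth sheaves on $\Gmk$ tame at $0$, form the convolution in $\CC$, note its slope $\tfrac12$ at infinity, and identify the target $M$ with the $\infty$-monodromy of $\Psi(K)$. Your reduction ``the lemma $\Leftrightarrow$ $M\otimes\LL_{\psi_{-ab}}$ has all slopes $<1$'' is also a correct (indeed tautological) reformulation. The problem is how you then try to prove it.

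There are two genuine gaps. First, and most seriously, your closing step is logically insufficient: an inequality $\Swan_\infty(T)<\mathrm{rk}(T)$ for a representation with slopes $\leq 1$ only shows that \emph{some} slope is $<1$, not that \emph{all} slopes are $<1$. If $M$ had two irreducible constituents with distinct critical Artin--Schreier twists (one equal to $-ab$, one not), you would still get a strict Swan inequality after twisting by $\LL_{\psi_{-ab}}$, yet the conclusion of the lemma would fail. This is exactly why the paper argues in the opposite direction: it shows that for every $c\neq -ab$ the twist $\LL_{\psi_c}\otimes\mathcal{A}$ has all slopes \emph{equal} to $1$ --- an equality $\Swan=\mathrm{rk}$, which \emph{is} detected by an Euler characteristic computation (via Corollary \ref{lema1cor}, the Ogg--Shafarevich formula, and a local-terms count for a tensor product of two Fourier transforms singular at the distinct points $1/a$ and $-b/c$) --- and then invokes \cite[Lemma 8.5.7]{katz1988gauss} on the irreducible components to conclude that the unique critical twist of every component is $-ab$. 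Second, the heart of your direct approach (the ``cancellation'' of the slope-$1$ contributions from the fibres $s\neq 0$ in the pushforward $R\pi_!$, and the reduction of the wild $\infty$-monodromy of $T$ to the local behaviour at $(t,s)=(\infty,0)$) is asserted but not proved; as you acknowledge, this is precisely the delicate point, and it is not a routine adaptation of Lemma \ref{inT}. You should replace the final step by the paper's complementary-twist argument, or supply a genuine local acyclicity/vanishing-cycles proof of the claimed localization together with an argument that handles \emph{all} slope-$1$ constituents at once.
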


\begin{proof}
 Since $\LL_{\psi_a}\otimes\FF$ and $\LL_{\psi_b}\otimes\GGG$ both have slope $1$ \cite[Lemma 1.3]{katz1988gauss}, their $\rho_{(\infty,\infty)}$ has slope $\frac{1}{2}$ by proposition \ref{slopes1}, so ${\mathcal A}:=\istar\FT_{(0,\infty)}^{-1}(\rho_{(\infty,\infty)}(\LL_{\psi_a}\otimes\FF,\LL_{\psi_b}\otimes\GGG))$ has slope $1$ by LFTT. By \cite[Lemma 8.5.7]{katz1988gauss} applied to its irreducible components, it suffices to show that $\LL_{\psi_c}\otimes{\mathcal A}$ has all slopes equal to $1$ for every $c\neq -ab$.

Let $m$ and $n$ be the dimensions of $\FF$ and $\GGG$. By \cite{katz1986local} there exist smooth $\QQ$-sheaves on $\GG_{m,\bar k}$ (also denoted by $\FF$ and $\GGG$), tamely ramified at $0$, which are isomorphic to $\FF$ and $\GGG$ as $I_\infty$-representations. Since $\LL_{\psi_a}\otimes\FF[1]$ and $\LL_{\psi_b}\otimes\GGG[1]$ are smooth on $\GG_{m,\bar k}$, tamely ramified at $0$ and totally wild at $\infty$ (with slope $1$), so is their convolution \cite[Theorem 5.1]{katz1988gauss} (with slope $\frac{1}{2}$). The rank of this convolution is $2mn$ by \cite[Theorem 5.1(4)]{katz1988gauss}. Then its (inverse) Fourier transform with respect to $\psi$ $\FT^{-1}((\LL_{\psi_a}\otimes\FF)[1]\ast(\LL_{\psi_b}\otimes\GGG)[1])$ has a unique positive slope $1$ at $0$ with multiplicity $mn$, is tame at infinity and smooth on $\GG_{m,\bar k}$ of rank $2mn$. For every $c\in\bar k^\star$ the $\infty$-slopes of $\LL_{\psi_c}\otimes{\mathcal A}$ are then $\leq 1$ (where ${\mathcal A}=\istar\FT^{-1}((\LL_{\psi_a}\otimes\FF)[1]\ast(\LL_{\psi_b}\otimes\GGG)[1])$) \cite[Lemma 1.3]{katz1988gauss}, and they are all $=1$ if and only if its Swan conductor at $\infty$ (which is its Euler characteristic by the Ogg-Shafarevic formula) is $2mn$. We have, by corollary \ref{lema1cor},
\begin{align*}
\chi(\GG_{m,\bar k},\LL_{\psi_c}\otimes{\mathcal A})&=\chi(\GG_{m,\bar k},\LL_{\psi_c}\otimes(\istar\LL_{\bar\psi}[1]\ast (\LL_{\psi_a}\otimes\FF)[1]\ast(\LL_{\psi_b}\otimes\GGG)[1]))=
\\
&=\chi(\GG_{m,\bar k},\tau_c^\star\LL_{\psi}\otimes(\istar\LL_{\bar\psi}[1]\ast (\LL_{\psi_a}\otimes\FF)[1]\ast(\LL_{\psi_b}\otimes\GGG)[1]))=
\\
&=\chi(\GG_{m,\bar k},\LL_{\psi}\otimes\tau_{1/c}^\star(\istar\LL_{\bar\psi}[1]\ast (\LL_{\psi_a}\otimes\FF)[1]\ast(\LL_{\psi_b}\otimes\GGG)[1]))=
\\
&=\chi(\GG_{m,\bar k},\LL_{\psi}\otimes(\istar\LL_{\psi_{-1}}[1]\ast (\LL_{\psi_{a}}\otimes\FF)[1]\ast(\LL_{\psi_{b/c}}\otimes\tau_{1/c}^\star\GGG)[1]))=
\\
&=\chi(\GG_{m,\bar k},(\istar\LL_{\psi_{-1}}[1]\ast (\LL_{\psi_{a}}\otimes\FF)[1])\otimes\FT(\LL_{\psi_{b/c}}\otimes\tau_{1/c}^\star\GGG)[1])=
\\
&=\chi(\GG_{m,\bar k},\istar\tau_{-1}^\star(\LL_{\psi}[1]\ast \istar(\LL_{\psi_{a}}\otimes\FF)[1])\otimes\FT(\LL_{\psi_{b/c}}\otimes\tau_{1/c}^\star\GGG)[1])=
\\
&=\chi(\GG_{m,\bar k},\istar\tau_{-1}^\star\FT(\LL_{\psi_{a}}\otimes\FF)[1]\otimes\FT(\LL_{\psi_{b/c}}\otimes\tau_{1/c}^\star\GGG)[1])
\end{align*}
where $\tau_\lambda:\Gmk\to\Gmk$ is the multiplication by $\lambda$ map, and we have made repeated use of the fact that $(\tau_a^\star K)\ast(\tau_b^\star L)\cong\tau_{ab}^\star(K\ast L)$ \cite[8.1.10]{katz1990esa}.

The objects $\FT(\LL_{\psi_{a}}\otimes\FF)$ and $\FT(\LL_{\psi_{b/c}}\otimes\tau_{1/c}^\star\GGG)$ are smooth on $\GG_{m,\bar k}$ (except at $-a$ and $-\frac{b}{c}$ respectively) of ranks $m$ and $n$ and tame at $0$ and infinity by LFTT. Suppose that $c\neq -ab$. Then the Euler characteristic of $\istar\tau_{-1}^\star\FT(\LL_{\psi_{a}}\otimes\FF)\otimes\FT(\LL_{\psi_{b/c}}\otimes\tau_{1/c}^\star\GGG)$ on $\Gmk$ is the sum of two local terms. At $\frac{1}{a}$ the second factor is smooth of rank $n$, so the local term at this point is $n$ times the corresponding local term for the Euler characteristic of $\istar\tau_{-1}^\star\FT(\LL_{\psi_{a}}\otimes\FF)$ (the sum of the drop of the rank and the Swan conductor). Similarly, at $-\frac{b}{c}$ the first term is smooth of rank $m$, so the local term is $m$ times the corresponding local term for the Euler characteristic of $\FT(\LL_{\psi_{b/c}}\otimes\tau_{1/c}^\star\GGG)$. We conclude that the Euler characteristic of the tensor product is
\begin{align*}
&n\cdot\chi(\Gmk,\istar\tau_{-1}^\star\FT(\LL_{\psi_{a}}\otimes\FF))+m\cdot\chi(\Gmk,\FT(\LL_{\psi_{b/c}}\otimes\tau_{1/c}^\star\GGG))=
\\
&=n\cdot\chi(\Gmk,\FT(\LL_{\psi_{a}}\otimes\FF))+m\cdot\chi(\Gmk,\FT(\LL_{\psi_{b/c}}\otimes\tau_{1/c}^\star\GGG))=
\\
&=n\cdot\chi(\Gmk,\LL_{\psi_{a}}\otimes\FF)+m\cdot\chi(\Gmk,\LL_{\psi_{b/c}}\otimes\tau_{1/c}^\star\GGG)=-nm-mn=-2mn
\end{align*}
since, for $K\in\Dbc(\Gmk,\QQ)$, we have 
\begin{align*}
 \chi(\Gmk,\FT(K))&=\chi(\AAA^1_{\bar k},\FT(j_!K))-\mathrm{rank}_0(\FT(j_!K))=
\\ &=-\mathrm{rank}_0(j_!K)+\chi(\AAA^1_{\bar k},j_!K)=\chi(\Gmk,K).
\end{align*}
\end{proof}

We can now define the functors $\rho_{(s,t)}^{(u)}:\RR_s\times\RR_t\to\RR_u$ for every $s,t,u\in\bar k^\star$. We set
$$
\rho_{(s,t)}^{(u)}(\FF,\GGG)=\istar\FT_{(u^{-1},\infty)}^{-1}( \istar\FT_{(0,\infty)}^{-1}(\rho_{(\infty,\infty)}(\FT_{(s^{-1},\infty)}\istar\FF,\FT_{(t^{-1},\infty)}\istar\GGG))).
$$
The previous lemma implies that $\rho_{(s,t)}^{(u)}=0$ if $u\neq st$.

Next, we define the functor $\rho_{(0,\infty)}^{(u)}:\RR_0^w\times\RR_\infty^w\to\RR_u$ for every $u\in\bar k^\star$. Since $\RR_0^w=\bigoplus_{\lambda>0}\RR_0^{w,\lambda}$ and similarly for $\RR_\infty^w$, it suffices to define $\rho_{(0,\infty)}^{(u)}(\FF,\GGG)$ for $\FF,\GGG$ having single slopes $a,b$. If $a>1$, we set
$$
\rho_{(0,\infty)}^{(u)}(\FF,\GGG)= \istar\FT_{(u^{-1},\infty)}^{-1}(\rho_{(\infty,\infty)}(\FT_{(\infty,\infty)}\istar\FF,\GGG))
$$
If $a=1$,
$$
\rho_{(0,\infty)}^{(u)}(\FF,\GGG)= \istar\FT_{(u^{-1},\infty)}^{-1}(\bigoplus_{t\in\bar k^\star}\rho_{(t,\infty)}(\FT_{(\infty,t)}\istar\FF,\GGG))
$$
 where the last sum is finite, since there are only finitely many $t\in\bar k^\star$ such that $\FT_{(\infty,t)}\istar\FF\neq 0$. If $a<1$ we define
$$
\rho_{(0,\infty)}^{(u)}(\FF,\GGG)=\istar\FT_{(u^{-1},\infty)}^{-1}(\rho_{(0,\infty)}(\FT_{(\infty,0)}\istar\FF,\GGG)).
$$
Notice that in all cases $\rho_{(0,\infty)}^{(u)}(\FF,\GGG)=0$ if $a\neq b$, since $\FT^{-1}_{(u^{-1},\infty)}$ can only be non-zero for representations with slope $1$.

\begin{proof}[Proof of theorem \ref{finite}]
 We have $K\ast L\cong\Psi(\Phi(K\ast L))\cong\Psi(\Phi(K)\ast L)$. By theorem \ref{infinity}
\begin{align*}
(\Phi(K)\ast L)^w_{(\infty)} &\cong  \rho_{(\infty,\infty)}(\Phi(K)_{(\infty)}^w,L_{(\infty)}^w)\oplus
\\ 
&\oplus\rho_{(0,\infty)}(\Phi(K)_{(0)}^w,L_{(\infty)}^w)\oplus\rho_{(\infty,0)}(\Phi(K)_{(\infty)}^w,L_{(0)}^w)\oplus \\  &\oplus\left(\bigoplus_{s\in S(\Phi(K))}\rho_{(s,\infty)}(\Phi(K)_{(s)},L_{(\infty)}^w)\right)\oplus\left(\bigoplus_{t\in S(L)}\rho_{(\infty,t)}(\Phi(K)_{(\infty)}^w,L_{(t)})\right).
\end{align*}
We are only interested in the slope $1$ part, which gives rise to non-trivial monodromy at finite points after applying $\Psi$. 

If $\FF$ has slope $a\leq 1$ and $\GGG$ has slope $b$, then by proposition \ref{slopes1} $\rho_{(\infty,\infty)}(\FF,\GGG)$ has slope $(a^{-1}+b^{-1})^{-1}\leq(1+b^{-1})^{-1}=\frac{b}{b+1}<1$. Similarly, if $a\geq 1$ $\rho_{(\infty,0)}(\FF,\GGG)$ (if non-zero) has slope $(a^{-1}-b^{-1})^{-1}\geq(1-b^{-1})^{-1}=\frac{b}{b-1}>1$. On the other hand, by proposition \ref{slopes2} $\rho_{(\infty,t)}(\FF,L_{(t)})$ has the same slopes as $\FF$. We conclude that the slope $1$ part of $(\Phi(K)\ast L)^{w,1}_{(\infty)}$ is the slope $1$ part of
\begin{align*}
&\rho_{(\infty,\infty)}(\Phi(K)_{(\infty)}^{w,>1},L_{(\infty)}^w)\oplus\rho_{(0,\infty)}(\Phi(K)_{(0)}^w,L_{(\infty)}^w)\oplus\rho_{(\infty,0)}(\Phi(K)_{(\infty)}^{w,<1},L_{(0)}^w)\oplus
\\
&\oplus\left(\bigoplus_{s\in S(\Phi(K))}\rho_{(s,\infty)}(\Phi(K)_{(s)},L_{(\infty)}^{w,1})\right)\oplus\left(\bigoplus_{t\in S(L)}\rho_{(\infty,t)}(\Phi(K)_{(\infty)}^{w,1},L_{(t)})\right)=
\\
&=\rho_{(\infty,\infty)}(\FT_{(\infty,\infty)}(\istar K_{(0)}^{w,>1}),L_{(\infty)}^w)\oplus\rho_{(0,\infty)}(\FT_{(\infty,0)}(\istar K_{(0)}^{w,<1}),L_{(\infty)}^w)\oplus
\\
&\oplus\rho_{(\infty,0)}(\FT_{(0,\infty)}(\istar K_{(\infty)}^{w}),L_{(0)}^w)\oplus\left(\bigoplus_{s\in \bar k^\star}\rho_{(s,\infty)}(\FT_{(\infty,s)}(\istar K_{(0)}^{w,1}),L_{(\infty)}^{w,1})\right)\oplus
\\
&\oplus\left(\bigoplus_{s,t\in \bar k^\star}\rho_{(\infty,t)}(\FT_{(s^{-1},\infty)}(\istar K_{(s)}),L_{(t)})\right)
\end{align*}
so, by definition of $\rho_{(0,\infty)}^{(u)}$, $\rho_{(s,t)}^{(u)}$ and $\rho_{(\infty,t)}$,
\begin{align*} 
(K\ast L)_{(u)}&=\Psi(\Phi(K)\ast L)_{(u)}=\istar\FT_{(u^{-1},\infty)}^{-1}(\Phi(K)\ast L)^{w,1}_{(\infty)}=
\\
&=\istar\FT_{(u^{-1},\infty)}^{-1}(\rho_{(\infty,\infty)}(\FT_{(\infty,\infty)}(\istar K_{(0)}^{w,>1}),L_{(\infty)}^w))\oplus 
\\
&\oplus \istar\FT_{(u^{-1},\infty)}^{-1}(\rho_{(0,\infty)}(\FT_{(\infty,0)}(\istar K_{(0)}^{w,<1}),L_{(\infty)}^w))\oplus
\\
&\oplus \istar\FT_{(u^{-1},\infty)}^{-1}(\rho_{(\infty,0)}(\FT_{(0,\infty)}(\istar K_{(\infty)}^w),L_{(0)}^w))\oplus
\\
&\oplus \left(\bigoplus_{s\in \bar k^\star}\istar\FT_{(u^{-1},\infty)}^{-1}(\rho_{(s,\infty)}(\FT_{(\infty,s)}(\istar K_{(0)}^{w,1}),L_{(\infty)}^{w,1}))\right)\oplus
\\
&\oplus\left(\bigoplus_{s,t\in \bar k^\star}\istar\FT_{(u^{-1},\infty)}^{-1}(\rho_{(\infty,t)}(\FT_{(s^{-1},\infty)}(\istar K_{(s)}),L_{(t)}))\right)=
\\
&=\rho^{(u)}_{(0,\infty)}(K_{(0)}^{w,>1},L_{(\infty)}^w)\oplus \rho^{(u)}_{(0,\infty)}(K_{(0)}^{w,<1},L_{(\infty)}^w)\oplus
\\
&\oplus \istar\FT_{(u^{-1},\infty)}^{-1}(\rho_{(\infty,0)}(\FT_{(0,\infty)}(\istar K_{(\infty)}^w),L_{(0)}^w))\oplus\rho^{(u)}_{(0,\infty)}(K_{(0)}^{w,1},L_{(\infty)}^{w,1})\oplus
\\
&\oplus\left(\bigoplus_{s,t\in \bar k^\star}\istar\FT_{(u^{-1},\infty)}^{-1}(\istar\FT_{(0,\infty)}^{-1}(\rho_{(\infty,\infty)}(\FT_{(s^{-1},\infty)}(\istar K_{(s)}),\FT_{(t^{-1},\infty)}(\istar L_{(t)}))))\right)=
\\
&=\rho^{(u)}_{(0,\infty)}(K_{(0)}^{w},L_{(\infty)}^w)\oplus\left(\bigoplus_{s,t\in \bar k^\star}\rho_{(s,t)}^{(u)}(K_{(s)},L_{(t)})\right)\oplus
\\
&\oplus \istar\FT_{(u^{-1},\infty)}^{-1}(\rho_{(\infty,0)}(\FT_{(0,\infty)}(\istar K_{(\infty)}^w),L_{(0)}^w)). &
\end{align*}

It only remains to show that the last term is equal to $\rho^{(u)}_{(\infty,0)}(K_{(\infty)}^{w},L_{(0)}^w)$. We have, using lemmas \ref{cancel}, \ref{cancel2} and \ref{cancel3},
\begin{align*}
&\istar\FT_{(u^{-1},\infty)}^{-1}(\rho_{(\infty,0)}(\FT_{(0,\infty)}(\istar K_{(\infty)}^w),L_{(0)}^w))=
\\
&=\istar\FT_{(u^{-1},\infty)}^{-1}(\rho_{(\infty,0)}(\FT_{(0,\infty)}(\istar K_{(\infty)}^w),L_{(0)}^{w,>1}))\oplus
\\
&\oplus \istar\FT_{(u^{-1},\infty)}^{-1}(\rho_{(\infty,0)}(\FT_{(0,\infty)}(\istar K_{(\infty)}^w),L_{(0)}^{w,1}))\oplus
\\
&\oplus \istar\FT_{(u^{-1},\infty)}^{-1}(\rho_{(\infty,0)}(\FT_{(0,\infty)}(\istar K_{(\infty)}^w),L_{(0)}^{w,<1}))=
\\
&=\istar\FT_{(u^{-1},\infty)}^{-1}(\istar\FT^{-1}_{(0,\infty)}(\rho_{(\infty,\infty)}(\FT_{(0,\infty)}(\istar K_{(\infty)}^w),\FT_{(\infty,\infty)}\istar L_{(0)}^{w,>1})))\oplus
\\
&\oplus \istar\FT_{(u^{-1},\infty)}^{-1}(\bigoplus_{t\in\bar k^\star}\istar\FT_{(0,\infty)}^{-1}(\rho_{(\infty,t)}(\FT_{(0,\infty)}(\istar K_{(\infty)}^w),\FT_{(\infty,t)}\istar L_{(0)}^{w,1})))\oplus
\\
&\oplus \istar\FT_{(u^{-1},\infty)}^{-1}(\istar\FT_{(0,\infty)}^{-1}(\rho_{(\infty,0)}(\FT_{(0,\infty)}(\istar K_{(\infty)}^w),\FT_{(\infty,0)}\istar L_{(0)}^{w,<1})))=
\\
&=\istar\FT_{(u^{-1},\infty)}^{-1}(\rho_{(\infty,\infty)}(K_{(\infty)}^w,\FT_{(\infty,\infty)}\istar L_{(0)}^{w,>1}))\oplus
\\
&\oplus \istar\FT_{(u^{-1},\infty)}^{-1}(\bigoplus_{t\in\bar k^\star}\rho_{(\infty,t)}(K_{(\infty)}^w,\FT_{(\infty,t)}\istar L_{(0)}^{w,1}))\oplus
\\
&\oplus \istar\FT_{(u^{-1},\infty)}^{-1}(\rho_{(\infty,0)}(\istar K_{(\infty)}^w,\FT_{(\infty,0)}\istar L_{(0)}^{w,<1}))=
\\
&=\rho_{(\infty,0)}^{(u)}(K_{(\infty)}^w,L_{(0)}^{w,>1})\oplus\rho_{(\infty,0)}^{(u)}(K_{(\infty)}^w,L_{(0)}^{w,1})\oplus\rho_{(\infty,0)}^{(u)}(K_{(\infty)}^w,L_{(0)}^{w,<1})=
\\
&=\rho_{(\infty,0)}^{(u)}(K_{(\infty)}^w,L_{(0)}^w).
\end{align*}
\end{proof}

The following result generalizes \cite[Lemma 19.5]{katz2010mellin}.

\begin{cor}
 Let $K,L\in\PPP$. Suppose that \begin{enumerate} 
\item $K_{(0)}$ and $L_{(\infty)}$ do not have any positive slope in common and
\item $K_{(\infty)}$ and $L_{(0)}$ do not have any positive slope in common. 
\end{enumerate}
 Then $S(K\ast L)=S(K)\cdot S(L)$. 
\end{cor}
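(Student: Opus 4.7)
The strategy is to apply Theorem~\ref{finite} to decompose $(K\ast L)_{(u)}$ and then use the slope-matching hypotheses to kill the ``mixed'' $\rho_{(0,\infty)}^{(u)}$ and $\rho_{(\infty,0)}^{(u)}$ contributions, reducing the problem to an analysis of the finite-point summands $\rho_{(s,t)}^{(u)}(K_{(s)},L_{(t)})$.

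First I would observe that by bi-additivity, $\rho_{(0,\infty)}^{(u)}(K_{(0)}^w, L_{(\infty)}^w)$ decomposes according to the single-slope isotypic components of its two arguments, and as remarked right after the definition of $\rho_{(0,\infty)}^{(u)}$, the summand corresponding to slope $a$ of $\FF$ and slope $b$ of $\GGG$ vanishes unless $a=b$ (the outermost $\istar\FT_{(u^{-1},\infty)}^{-1}$ is non-zero only on slope-$1$ input, and a direct slope count using Propositions~\ref{slopes1}--\ref{slopes3} in each of the cases $a>1$, $a=1$, $a<1$ forces $a=b$). Hypothesis~(1) therefore gives $\rho_{(0,\infty)}^{(u)}(K_{(0)}^w, L_{(\infty)}^w)=0$, and symmetrically hypothesis~(2) kills the $\rho_{(\infty,0)}^{(u)}$ term. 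Theorem~\ref{finite} thus simplifies to
$$
(K\ast L)_{(u)} \cong \bigoplus_{\substack{s,t\in\bar k^\star \\ st=u}} \rho_{(s,t)}^{(u)}(K_{(s)}, L_{(t)}).
$$

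The inclusion $S(K\ast L)\subseteq S(K)\cdot S(L)$ is now immediate: if $(K\ast L)_{(u)}\neq 0$ then some summand on the right is non-zero, and bi-additivity of $\rho_{(s,t)}^{(u)}$ forces $K_{(s)}\neq 0$ and $L_{(t)}\neq 0$, so $s\in S(K)$, $t\in S(L)$, and $u=st\in S(K)\cdot S(L)$.

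For the reverse inclusion, given $s\in S(K)$, $t\in S(L)$ and $u=st$, I would trace through the defining formula for $\rho_{(s,t)}^{(u)}$ to show it does not vanish on $(K_{(s)},L_{(t)})$. By LFTT, $\FT_{(s^{-1},\infty)}\istar K_{(s)}\cong \LL_{\psi_{s^{-1}}}\otimes\FF$ where $\FF:=\FT_{(0,\infty)}\istar K_{(s)}$ is non-zero and has all slopes strictly less than $1$; the analogous description holds for $L_{(t)}$. Since both inputs have slope $1$, Proposition~\ref{slopes1} gives that $\rho_{(\infty,\infty)}(\FT_{(s^{-1},\infty)}\istar K_{(s)}, \FT_{(t^{-1},\infty)}\istar L_{(t)})$ has slope $\tfrac{1}{2}$ and strictly positive dimension, hence is non-zero. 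The lemma immediately preceding the definition of $\rho_{(s,t)}^{(u)}$ then identifies $\istar\FT_{(0,\infty)}^{-1}$ of this with $\LL_{\psi_{s^{-1}t^{-1}}}\otimes\HHH = \LL_{\psi_{u^{-1}}}\otimes\HHH$ for some $\HHH\in\RR_\infty$ with all slopes $<1$, necessarily non-zero. Finally, using $\FT_{(u^{-1},\infty)}\HHH'\cong \LL_{\psi_{u^{-1}}}\otimes\FT_{(0,\infty)}\HHH'$ from LFTT, one sees $\istar\FT_{(u^{-1},\infty)}^{-1}(\LL_{\psi_{u^{-1}}}\otimes\HHH)\cong\istar\FT_{(0,\infty)}^{-1}(\HHH)\in\RR_u$, which is non-zero because $\HHH$ is non-zero with all slopes $<1$.

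The main obstacle is controlling the Artin-Schreier twist through the chain of Fourier and inversion operators defining $\rho_{(s,t)}^{(u)}$: the identification with $\LL_{\psi_{u^{-1}}}$ at the intermediate step is precisely where the matching $u=st$ enters, and without the cited lemma one could not conclude that the final $\istar\FT_{(u^{-1},\infty)}^{-1}$ preserves non-vanishing.
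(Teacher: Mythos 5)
Your proof is correct and follows essentially the same route as the paper: invoke Theorem \ref{finite}, kill the two mixed terms via the observation (made in the paper right after the definition of $\rho_{(0,\infty)}^{(u)}$) that these vanish unless the slopes of the two arguments coincide, and note that the surviving sum $\bigoplus_{st=u}\rho_{(s,t)}^{(u)}(K_{(s)},L_{(t)})$ is non-zero exactly when $u\in S(K)\cdot S(L)$. The only difference is that the paper asserts the non-vanishing of $\rho_{(s,t)}^{(u)}$ on non-zero inputs without comment, while you verify it by a dimension count through the defining formula (one could alternatively quote Proposition \ref{bothwild} and the tame tensor-product corollary); this fills in a detail rather than changing the argument.
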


\begin{proof}
 In this case both $\rho_{(0,\infty)}^{(u)}(K_{(0)},L_{(\infty)})$ and $\rho_{(\infty,0)}^{(u)}(K_{(\infty)},L_{(0)})$ vanish for every $u\in \bar k^\star$, so 
$$
(K\ast L)_{(u)}=\bigoplus_{st=u}\rho^{(u)}_{(s,t)}(K_{(s)},L_{(t)})
$$
which vanishes if and only if $u\notin S(K)\cdot S(L)$.
\end{proof}

For every $s\in\bar k^\star$ and every $\FF\in\RR_s$ there exists a semisimple $K\in\PPP$ without punctual part, smooth on $\GG_{m,\bar k}-\{s\}$, tamely ramified at $0$ and $\infty$ and such that $K_{(s)}\cong\FF$. It suffices to prove it when $\FF$ is either a single tame Jordan block or totally wild irreducible, since every representation is a direct sum of those. In the first case, if $\FF$ has character $\chi\neq{\mathbf 1}$ and dimension $n$ we can take $j_\star \phi^\star{\mathcal H}(\psi;n\;{\mathbf 1}'s;n\;\chi's)[1]$, where ${\mathcal H}(\psi;n\;{\mathbf 1}'s;n\;\chi's)$ is the hypergeometric sheaf associated to the $n$-uples $(\mathbf 1,\ldots,\mathbf 1)$ and $(\chi,\ldots,\chi)$, $j:\GG_{m,\bar k}-\{s\}\hookrightarrow\GG_{m,\bar k}$ is the inclusion and $\phi(x)=\frac{1}{s-x}$ \cite[8.4]{katz1990esa}. If $\FF$ has trivial character (i.e. it is unipotent) we take $j_\star \phi^\star{\mathcal H}(\psi;n+1\;\chi's;n+1\;{\mathbf 1}'s)[1]$ where $\chi$ is any non-trivial character \cite[Theorem 8.4.2]{katz1990esa}. In the second case, we take $j_\star \phi^\star{\mathcal M}[1]$, where $\mathcal M$ is a smooth sheaf on $\GG_{m,\bar k}$, tame at $0$, such that its monodromy at infinity is isomorphic to $\FF$ viewed as a representation of $I_\infty$ via the isomorphism $I_s\cong I_\infty$ induced by $\phi$ (such an $\mathcal M$ exists by \cite{katz1986local}).

\begin{cor}\label{translate} If we identify $I_u$ and $I_1$ via the isomorphism mapping the uniformizer $x-u$ of $I_u$ to the uniformizer $x-1$ of $I_1$, then $\tau_{st}^\star\rho^{(st)}_{(s,t)}(\FF,\GGG)\cong\rho^{(1)}_{(1,1)}(\tau_s^\star\FF,\tau_t^\star\GGG)$ for every $\FF\in\RR_s$, $\GGG\in\RR_t$, where $\tau_\lambda:I_1\to I_1$ is the automorphism induced by $(x-1)\mapsto \lambda(x-1)$. In particular, if $\FF$ and $\GGG$ are tame then $\rho^{(st)}_{(s,t)}(\FF,\GGG)\cong\rho^{(1)}_{(1,1)}(\FF,\GGG)$
\end{cor}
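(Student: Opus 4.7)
The plan is to derive the corollary from Theorem~\ref{finite} together with the translation-compatibility $(\tau_a^\star K)\ast(\tau_b^\star L)\cong\tau_{ab}^\star(K\ast L)$ of \cite[8.1.10]{katz1990esa} and the realization result stated just before the corollary. The point is that if $\FF$ and $\GGG$ are realized as local monodromies at $s$ and $t$ of objects in $\PPP$ with no other ramification, then Theorem~\ref{finite} identifies $(K\ast L)_{(st)}$ directly with $\rho^{(st)}_{(s,t)}(\FF,\GGG)$, while translating the unique ramification points to $1$ allows the same theorem to read off $\rho^{(1)}_{(1,1)}$.

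Concretely, I would first pick semisimple $K,L\in\PPP$ with $K$ smooth on $\Gmk-\{s\}$ and tame at $0$ and $\infty$ satisfying $K_{(s)}\cong\FF$, and similarly $L$ with $L_{(t)}\cong\GGG$. Under these hypotheses every summand on the right-hand side of Theorem~\ref{finite} at $u=st$ vanishes except the $(s,t)$ one, giving $(K\ast L)_{(st)}\cong\rho^{(st)}_{(s,t)}(\FF,\GGG)$. Setting $K':=\tau_s^\star K$ and $L':=\tau_t^\star L$, the same objects now have their unique non-smooth point at $1$, so Theorem~\ref{finite} at $u=1$ collapses to $(K'\ast L')_{(1)}\cong\rho^{(1)}_{(1,1)}(K'_{(1)},L'_{(1)})$. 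On the other hand, the translation formula yields $K'\ast L'\cong\tau_{st}^\star(K\ast L)$, so taking monodromy at $1$ gives $(K'\ast L')_{(1)}\cong\tau_{st}^\star\bigl((K\ast L)_{(st)}\bigr)$. Chaining the three isomorphisms delivers the desired identity, provided we identify $K'_{(1)}$ with $\tau_s^\star\FF$ and $L'_{(1)}$ with $\tau_t^\star\GGG$.

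The main technical obstacle lies in this last identification, where the two natural conventions for identifying $I_u$ with $I_1$ (the one coming from the multiplicative translation $\tau_u$ and the uniformizer-to-uniformizer one) must be reconciled. The morphism $\tau_s:\Gmk\to\Gmk$ pulls back the uniformizer $x-s$ at $s$ to the function $s(y-1)$ near $y=1$; under the uniformizer-to-uniformizer identification this is precisely the automorphism $(y-1)\mapsto s(y-1)$ of $I_1$, which matches the $\tau_s$ appearing in the statement. Tracking this scaling factor carefully at each of the three steps above is where essentially all of the content resides.

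For the ``in particular'' assertion, I would observe that tame representations are invariant under $\tau_\lambda^\star$: Kummer characters $\LL_\chi$ are determined by characters of $\bar k^\star$, which are unchanged by scaling a uniformizer, and unipotent Jordan blocks reduce to the trivial character. Hence $\tau_s^\star\FF\cong\FF$ and $\tau_t^\star\GGG\cong\GGG$, and the main isomorphism becomes $\tau_{st}^\star\rho^{(st)}_{(s,t)}(\FF,\GGG)\cong\rho^{(1)}_{(1,1)}(\FF,\GGG)$. A direct slope count through the definition of $\rho^{(st)}_{(s,t)}$, using LFTT and Proposition~\ref{slopes1}, shows that its output is itself tame when $\FF,\GGG$ are tame, so $\tau_{st}^\star$ acts trivially on it and the conclusion $\rho^{(st)}_{(s,t)}(\FF,\GGG)\cong\rho^{(1)}_{(1,1)}(\FF,\GGG)$ follows.
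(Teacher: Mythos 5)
Your proposal is correct and follows essentially the same route as the paper: realize $\FF,\GGG$ as the unique singular local monodromies of tame-at-$0$-and-$\infty$ objects $K,L$, apply Theorem~\ref{finite} before and after pulling back by $\tau_s,\tau_t$, and use $(\tau_s^\star K)\ast(\tau_t^\star L)\cong\tau_{st}^\star(K\ast L)$ to chain the isomorphisms. Your extra care about reconciling the uniformizer identification and about checking that $\tau_{st}^\star$ acts trivially on the (tame) output in the ``in particular'' step only makes explicit what the paper leaves implicit.
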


\begin{proof}
 Let $K,L\in\PPP$ be smooth on $\GG_{m,\bar k}-\{s\}$ and $\GG_{m,\bar k}-\{t\}$ respectively, tame at $0$ and $\infty$ and such that $K_{(s)}\cong\FF$, $K_{(t)}\cong\GGG$. Then by theorem \ref{finite} $(K\ast L)_{(st)}\cong\rho_{(s,t)}^{(st)}(\FF,\GGG)$.

Let $\tau_\lambda:\GG_{m,\bar k}\to\GG_{m,\bar k}$ be the multiplication by $\lambda$ map. Under the given identification $I_\lambda\cong I_1$, it induces $\tau_\lambda$ on $I_1$. The objects $\tau_s^\star K$ and $\tau_t^\star L$ are smooth on $\GG_{m,\bar k}-\{1\}$, tame at $0$ and $\infty$ and $(\tau_s^\star K)_{(1)}=\tau_s^\star(K_{(s)})$, $(\tau_t^\star L)_{(1)}=\tau_t^\star(L_{(t)})$. Using that $\tau_\lambda(K\ast L)\cong(\tau_\lambda K)\ast L$ \cite[8.1.10]{katz1990esa}, we conclude that
\begin{align*}
\rho^{(1)}_{(1,1)}(\tau_s^\star\FF,\tau_t^\star\GGG)&\cong(\tau_s^\star K\ast\tau_t^\star L)_{(1)}=(\tau_{st}^\star(K\ast L))_{(1)}=
\\
&=\tau_{st}^\star((K\ast L)_{(st)})=\tau_{st}^\star\rho_{(s,t)}^{(st)}(K_{(s)},L_{(t)})=\tau_{st}^\star\rho_{(s,t)}^{(st)}(\FF,\GGG).
\end{align*}

The last statement follows from the fact that $\tau_\lambda^\star\FF\cong\FF$ for every $\lambda\in\bar k^\star$ is $\FF$ is tame (since every such $\FF$ is a succesive extension of tame characters of the form $\LL_\chi$).
  
\end{proof}

This reduces the study of the properties of the functors $\rho_{(s,t)}^{(st)}$ to those of $\rho_{(1,1)}^{(1)}$. We now give an alternative local description of this functor, which is more convenient for some applications.

\begin{lem}\label{vanishing}
 Let $S$ be the henselization of $\GG_{m,\bar k}$ at $1$, and $\mu_S:(\GG_{m,\bar k}\times\GG_{m,\bar k})\times_{\GG_{m,\bar k}} S \to S$ the map induced by multiplication. Then for every semisimple $K,L\in\PPP$ which are smooth on $\GG_{m,\bar k}-\{1\}$ and tame at $0$ and $\infty$ we have
$$
\rho_{(1,1)}^{(1)}(K_{(1)},L_{(1)})\cong\R^{-1}\Phi_{(1,1)}
$$
where $\R\Phi$ is the vanishing cycles complex for the object $K\boxtimes L$ relative to $\mu_S$.
\end{lem}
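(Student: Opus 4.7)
The plan is to combine Theorem \ref{finite} with the standard compatibility between vanishing cycles and compactly supported direct image, applied to the multiplication map $\mu$.

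First I would apply Theorem \ref{finite} at the point $u=1$. Since both $K$ and $L$ are tame at $0$ and $\infty$, the wild pieces $K_{(0)}^w$, $K_{(\infty)}^w$, $L_{(0)}^w$, $L_{(\infty)}^w$ all vanish, so the $\rho^{(1)}_{(0,\infty)}$ and $\rho^{(1)}_{(\infty,0)}$ contributions drop out. The smoothness of $K$ and $L$ on $\Gmk-\{1\}$ forces $S(K)=S(L)=\{1\}$, so among the terms $\rho^{(1)}_{(s,t)}(K_{(s)},L_{(t)})$ with $st=1$ only the summand $(s,t)=(1,1)$ survives. This gives
$$
(K\ast L)_{(1)}\cong \rho^{(1)}_{(1,1)}(K_{(1)},L_{(1)}),
$$
so it remains to identify $(K\ast L)_{(1)}$ with $\R^{-1}\Phi_{(1,1)}$.

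Next I would interpret $(K\ast L)_{(1)}$ geometrically. By the very definition of $K_{(t)}$ from Section~2, $(K\ast L)_{(1)}=\HHH^{-1}(C_1)$, where $C_1$ is the mapping cone of the specialization map $(K\ast L)_1\to(K\ast L)_{\bar\eta}$; this cone is the stalk at $1$ of the vanishing cycle complex for $K\ast L$ viewed as an object on $\Gmk$. By the compatibility of $\R\mu_!$ with nearby cycles for the Cartesian square obtained by base-changing $\mu$ along $S\to\Gmk$, one obtains a canonical isomorphism
$$
\R\Phi(K\ast L)_1\cong \R\Gamma_c\bigl(\mu^{-1}(1),\,\R\Phi(K\boxtimes L)\bigr),
$$
where $\R\Phi$ on the right is taken relative to $\mu_S$.

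Finally I would locate the support. The map $\mu$ is smooth (being the group law on a smooth group scheme) and $K\boxtimes L$ is lisse on the open subset $(\Gmk-\{1\})\times(\Gmk-\{1\})$, so $\R\Phi(K\boxtimes L)$ is supported on the intersection of the non-lisse locus with $\mu^{-1}(1)$. Since $\mu^{-1}(1)\cap\bigl((\{1\}\times\Gmk)\cup(\Gmk\times\{1\})\bigr)=\{(1,1)\}$, the right hand side collapses to the single stalk $\R\Phi_{(1,1)}$. Taking $\HHH^{-1}$ yields $(K\ast L)_{(1)}\cong \R^{-1}\Phi_{(1,1)}$, which combined with the first step proves the lemma.

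The main technical subtlety is the commutation of $\R\mu_!$ with vanishing cycles: the classical statement in SGA~7 is formulated for $\R f_*$ with $f$ proper, so one either compactifies $\mu$ (as was done in the proof of Lemma \ref{inT}, via the hypersurface $V\hookrightarrow\PP^2_{\bar k}\times\AAA^1_{\bar k}$) and verifies that the vanishing cycles at the boundary points $(0,1,0)$, $(1,0,0)$ and $(0,0,1)$ do not contribute to the specialization at $1$ under our smoothness/tameness assumptions, or appeals to the compactly supported variant of the proper base change theorem for $\R\Phi$.
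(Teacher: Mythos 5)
Your overall strategy is the paper's: use Theorem \ref{finite} to reduce the lemma to the identification $(K\ast L)_{(1)}\cong\R^{-1}\Phi_{(1,1)}$, then realize $(K\ast L)_{(1)}$ as the vanishing cycles of the pushforward along multiplication and localize the support at $(1,1)$. The first reduction and the localization on the open part (smoothness of $\mu$ plus lisseness of $K\boxtimes L$ away from $(\{1\}\times\Gmk)\cup(\Gmk\times\{1\})$) are correct and match the paper.

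The gap is the step you relegate to a closing remark, because it is the actual content of the proof. There is no unconditional ``compactly supported variant'' of the commutation of $\R\Phi$ with pushforward: vanishing cycles do not commute with $j_!$ for an open immersion in general (the failure is measured precisely by the jump of drop-plus-Swan at the boundary, which is the subject of Laumon's semicontinuity theorem), so you cannot simply cite such a statement for $\R\mu_!$. One must compactify --- the paper uses $V:XY=tZ^2$ in $\PP^2_{\bar k}\times\Gmk$ with $M=j_!(K_{X/Z}\otimes L_{Y/Z})$ --- and then \emph{prove} that $M$ is universally locally acyclic for $\pi:V\to\Gmk$ at every point of the fibres except $((1,1,1),1)$, in particular at the boundary points $(0,1,0)$ and $(1,0,0)$ (the point $(0,0,1)$ you list does not lie on the conic for $t\neq 0$; it is relevant only in the degeneration used in Lemma \ref{inT}). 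The paper does this by checking that on the fibre over $t$ the sum of the drop of the rank and the Swan conductor of $\HHH^{-2}(M)$ at each of the four singular points $(0,1,0),(1,0,0),(1,t,1),(t,1,1)$ is independent of $t\neq 1$, and invoking Laumon's semicontinuity theorem; at the two boundary points this is exactly where the hypotheses enter (tameness of $K$ at $0$ and of $L$ at $\infty$ gives Swan $=0$, and extension by zero gives drop $=$ full generic rank). ULA away from $(1,1,1)$ makes $\R\tilde\Phi$ punctual there, and the vanishing-cycles exact sequence for the proper map $\pi$ then yields the lemma. Without carrying out this semicontinuity computation, the commutation of vanishing cycles with $\R\mu_!$ --- hence your displayed isomorphism $\R\Phi(K\ast L)_1\cong\R\Gamma_c(\mu^{-1}(1),\R\Phi(K\boxtimes L))$ --- is unjustified.
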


\begin{proof}
 By additivity we may assume that $K$ and $L$ are irreducible. If one of them is punctual the result is trivial, since $\delta_1$ (the punctual perverse sheaf at $1$) is the identity for the convolution. So we will assume that $K$ and $L$ are irreducible middle extensions.

 Let $V\subseteq\PP^2_{\bar k}\times\GG_{m,\bar k}$ (with coordinates $((X,Y,Z),t)$) be the subscheme defined by $XY=tZ^2$. The projection $\pi:V\to\GG_{m,\bar k}$ is a compactification of the multiplication map $\GG_{m,\bar k}\times\GG_{m,\bar k}\to\GG_{m,\bar k}$. On $V$ we consider the object $M$, extension by zero of $K_{X/Z}\otimes L_{Y/Z}$ on the open set $XYZ\neq 0$. Then by theorem \ref{finite} $\rho_{(1,1)}^{(1)}(K_{(1)},L_{(1)})$ is the $(-1)$-st cohomology group of the mapping cone of the specialization map $(\R\pi_\star M)_1\to(\R\pi_\star M)_{\bar\eta}$.

On the fibre over any $t\in \bar k^\star$, the object $M$ is smooth except at the four points $(0,1,0),(1,0,0),(1,t,1)$ and $(t,1,1)$, which are all distinct except for the last two when $t=1$. At $(0,1,0)$ and $(1,0,0)$ it is tamely ramified and the stalk vanishes, since $K$ and $L$ are tamely ramified at $0$ and $\infty$. If $t\neq 1$ $L_{Y/Z}$ is smooth at $(1,t,1)$, so the drop of the rank plus the Swan conductor of $\HHH^{-2}(M)$ at $(1,t,1)$ is that of $\HHH^{-1}(K_{X/Z})$ at $(1,t,1)$ multiplied by $n$, that is, $n$ times the sum of the drop of the rank and the Swan conductor of $\HHH^{-1}(K)$ at $1$. In particular it is independent of $t\neq 1$. Similarly, the drop of the rank plus the Swan conductor of $\HHH^{-2}(M)$ at $(t,1,1)$ is independent of $t\neq 1$. By \cite[Th\'eor\`eme 2.1.1]{laumonsemi} we conclude that $\HHH^{-2}(M)$ (and therefore $M\cong\HHH^{-2}(M)[2]$) is universally locally acyclic on $V$ for $\pi$, except perhaps at the point $((1,1,1),1)$. In particular, the vanishing cycles complex $\R\tilde\Phi$ for the map $\pi:V\times_{\GG_{m,\bar k}}S\to S$ is punctual supported on $(1,1,1)$, so $\rho_{(1,1)}^{(1)}(K_1,L_1)\cong\R^{-1}\tilde\Phi_{(1,1,1)}=\R^{-1}\Phi_{(1,1)}$ by the vanishing cycles exact sequence \cite[Expos\'e XIII, 2.1.8.9]{deligne1972groupes}
$$
0\to(\R^{-1}\pi_\star M)_1\to(\R^{-1}\pi_\star M)_{\bar\eta}\to\HH^{-1}(V_1,\R\tilde\Phi)=\R^{-1}\tilde\Phi_{(1,1,1)}\to(\R^{0}\pi_\star M)_1\to 0.
$$
\end{proof}

\begin{cor}\label{oneofthemistame}
 For every $\FF\in\RR_1^t$ and $\GGG\in\RR_1$, if we view them as elements of $\RR_0$ via the translation $t\mapsto t+1$, we have
$$
\rho_{(1,1)}^{(1)}(\FF,\GGG)\cong\FT_{(0,\infty)}^{-1}((\FT_{(0,\infty)}\FF)\otimes(\FT_{(0,\infty)}\GGG)).
$$
In particular, if $\FF$ has dimension $m$ and $\GGG$ has a single slope $b$ and dimension $n$ then $\rho_{(1,1)}^{(1)}(\FF,\GGG)$ has a single slope $b$ and dimension $mn$.
\end{cor}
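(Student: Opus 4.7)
The plan is to translate the problem to the additive setting, where the formula follows from the compatibility between additive convolution and Fourier transform on $\AAA^1$ (Brylinski's theorem). I first realize $\FF$ and $\GGG$ as $K_{(1)}$ and $L_{(1)}$ for semisimple perverse sheaves $K, L \in \PPP$, smooth on $\Gmk - \{1\}$ and tame at $0$ and $\infty$, with $K$ additionally tame at $1$ (which is possible since $\FF$ is tame, using the hypergeometric realization discussed before Corollary~\ref{translate}). Lemma~\ref{vanishing} then identifies $\rho_{(1,1)}^{(1)}(\FF,\GGG)$ with the degree $-1$ vanishing cycles $\R^{-1}\Phi_{(1,1)}$ of $K \boxtimes L$ relative to the multiplication map $\mu:\Gmk\times\Gmk\to\Gmk$.

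I then convert multiplication to addition. Setting $u = x - 1$ and $v = y - 1$ transports the problem to the henselization of $\AAA^2$ at $(0,0)$: the box product becomes $\tilde K \boxtimes \tilde L$, where $\tilde K, \tilde L$ are the translated sheaves realizing $\FF, \GGG$ as $I_0$-representations, and $\mu - 1$ becomes $h(u,v) = u + v + uv$. The crucial geometric observation is that on the henselization, the map $h$ and the addition $\sigma(u,v) = u + v$ are related by the automorphism $\phi(u,v) = (u, (1+u)v)$ (well-defined since $1+u$ is a unit), which satisfies $\sigma\circ\phi = h$. To identify the vanishing cycles of $\tilde K \boxtimes \tilde L$ along $h$ with those along $\sigma$, I would interpolate: consider the family $H:\AAA^2\times\AAA^1_t\to\AAA^1$ given by $H(u,v,t) = u+v+tuv$, which specializes to $\sigma$ at $t=0$ and to $h$ at $t=1$, and show via the tameness of $\tilde K$ along $u = 0$ and a local acyclicity argument (analogous to the one used in the proof of Lemma~\ref{vanishing}, via \cite[Th\'eor\`eme 2.1.1]{laumonsemi}) that the vanishing cycles of $(\tilde K \boxtimes \tilde L)\boxtimes \QQ_{\AAA^1_t}$ relative to $H$ form a locally constant family in $t$ near $(0,0,t)$.

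Once this identification is established, the vanishing cycles of $\tilde K \boxtimes \tilde L$ along $\sigma$ at $(0,0)$ compute the local $I_0$-monodromy at $0$ of the additive convolution $\tilde K \ast_+ \tilde L$. By Brylinski's theorem \cite{brylinski1986transformations}, $\FT^\psi(\tilde K \ast_+ \tilde L) \cong \FT^\psi \tilde K \otimes \FT^\psi \tilde L$ up to a shift, and reading off the local monodromy at $0$ of the inverse Fourier transform of this tensor product via Theorem~\ref{LFTT} produces exactly $\FT_{(0,\infty)}^{-1}((\FT_{(0,\infty)}\FF)\otimes(\FT_{(0,\infty)}\GGG))$. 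The slope/dimension statement follows by direct LFTT arithmetic: $\FT_{(0,\infty)}\FF$ has slope $0$ and dimension $m$; $\FT_{(0,\infty)}\GGG$ has slope $\tfrac{b}{b+1}$ and dimension $n(b+1)$; their tensor product has slope $\tfrac{b}{b+1}$ and dimension $mn(b+1)$; and $\FT_{(0,\infty)}^{-1}$ then yields slope $b$ and dimension $mn$. The main obstacle is the interpolation argument in the second paragraph: rigorously showing that tameness of one factor suffices to make the $h$ and $\sigma$ vanishing cycles coincide, which seems to require carefully combining local acyclicity with the specific structure of the automorphism $\phi$ near the tame singularity of $\tilde K$.
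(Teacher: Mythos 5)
Your overall strategy matches the paper's: realize $\FF,\GGG$ as local monodromies of perverse sheaves $K,L$ tame at $0$ and $\infty$, apply Lemma \ref{vanishing}, translate to additive coordinates where the multiplication becomes $h(u,v)=u+v+uv$, compare with the sum map $\sigma$, and conclude by the additive local convolution formula. The final slope/dimension arithmetic is also correct. However, the step you yourself flag as ``the main obstacle'' --- identifying the vanishing cycles along $h$ with those along $\sigma$ --- is exactly where the real content lies, and your proposed interpolation through $H(u,v,t)=u+v+tuv$ is not carried out and is not the right tool. The paper does not deform anything: it applies the automorphism $\phi$ once and then confronts the fact that $\phi^\star(K'\boxtimes L')$ is \emph{not} $K'\boxtimes L'$ but rather $K'_{x(y+1)}\otimes L'_y$ (in the paper's coordinates, where the reparametrized variable feeds into the \emph{tame} factor $K'$). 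The key observation is then concrete: by bi-exactness one reduces to $\FF$ a single Jordan block for a character $\chi$, one may take $K'$ near $0$ to be an indecomposable successive extension of Kummer objects $\LL_\chi[1]$, and then $K'_{x(y+1)}\cong K'_x\otimes\LL_{\chi(y+1)}$ with $\LL_{\chi(y+1)}$ lisse and trivial on the henselization at $(0,0)$. So the pullback is isomorphic to $K'_x\otimes L'_y$ on the nose, with no local acyclicity argument needed. Note also that your choice $\phi(u,v)=(u,(1+u)v)$ puts the reparametrization on the \emph{wild} factor $\tilde L$, where no such Kummer trick is available; you would need $\phi(u,v)=((1+v)u,v)$ so that the distortion lands on the tame factor.

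A secondary gap: in the last step you invoke the global compatibility $\FT^\psi(\tilde K\ast_+\tilde L)\cong\FT^\psi\tilde K\otimes\FT^\psi\tilde L$ and then ``read off'' the monodromy at $0$ via LFTT. But the local monodromy at $0$ of $\tilde K\ast_+\tilde L$ receives contributions from every pair of singularities $(s,-s)$ of $\tilde K$ and $\tilde L$, and likewise $(\FT^\psi\tilde K\otimes\FT^\psi\tilde L)^w_{(\infty)}$ is not just $\FT_{(0,\infty)}\FF\otimes\FT_{(0,\infty)}\GGG$; isolating the $(0,0)$-contribution is precisely the content of Laumon's local statement. The paper avoids this by citing \cite[Proposition 2.7.2.2]{laumon1987transformation} directly, which identifies the \emph{local} additive convolution at $(0,0)$ with $\FT_{(0,\infty)}^{-1}((\FT_{(0,\infty)}\FF)\otimes(\FT_{(0,\infty)}\GGG))$. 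You should do the same rather than re-derive it from the global theorem.
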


\begin{proof}
Let $K,L\in\PPP$ be semisimple, smooth on $\GG_{m,\bar k}-\{1\}$, tame at $0$ and $\infty$ and such that $K_{(1)}\cong\FF$, $L_{(1)}\cong\GGG$. By lemma \ref{vanishing} we have $\rho_{(1,1)}^{(1)}(\FF,\GGG)\cong\R^{-1}\Phi_{(1,1)}$, where $\Phi$ is the vanishing cycles complex for $K\boxtimes L$ and the map $\mu_S:(\GG_{m,\bar k}\times\GG_{m,\bar k})\times_{\GG_{m,\bar k}} S \to S$. By \cite[Expos\'e XIII, Proposition 2.1.4]{deligne1972groupes} we can write this as
$$
\HH^{-1}(\GG_{m,\bar k,(1,1)}^2\times_{\GG_{m,\bar k,(1)}}\bar\eta,K\boxtimes L)
$$
where $\GG_{m,\bar k,(1)}$ (respectively $\GG_{m,\bar k,(1,1)}^2$) is the henselization of $\GG_{m,\bar k}$ at $1$ (resp. the henselization of $\GG_{m,\bar k}^2$ at $(1,1)$), $\bar\eta$ is a geometric point over the generic point of $\GG_{m,\bar k,(1)}$ and the fibre product is taken with respect to the multiplication map $\mu:\GG_{m,\bar k,(1,1)}^2\to\GG_{m,\bar k,(1)}$. Via the translations $t\mapsto t+1$ this is equivalent (with the obvious notation) to
$$
\HH^{-1}(\AAA_{\bar k,(0,0)}^2\times_{\AAA^1_{\bar k,(0)}}\bar\eta,K'\boxtimes L')
$$
where the fibre product is now taken with respect to the map $(x,y)\mapsto (x+1)(y+1)-1=xy+x+y$ and $K'$ and $L'$ are the objects $K$ and $L$ translated by $1$ (so that $K'_{(0)}\cong\FF$ and $L'_{(0)}\cong\GGG$). Using the automorphism $\phi:\AAA^2_{\bar k,(0,0)}\to\AAA^2_{\bar k,(0,0)}$ given by $(x,y)\mapsto (x(y+1),y)$, which fits in a cartesian diagram
$$
\begin{CD}
 \AAA^2_{\bar k,(0,0)} @>\phi >> \AAA^2_{\bar k,(0,0)} \\
@V\sigma+\pi VV  @V\sigma VV \\
 {\AAA^1_{\bar k,(0)}} @> Id >> {\AAA^1_{\bar k,(0)}}
\end{CD}
$$
where $\sigma$ and $\pi$ are the sum and product maps, we get that 
$$
\R^{-1}\Phi_{(1,1)}\cong\HH^{-1}(\AAA_{\bar k,(0,0)}^2\times_{\AAA^1_{\bar k,(0)}}\bar\eta,K'_{x(y+1)}\otimes L'_y)
$$
where the fibre product is now taken with respect to the sum map and $(x,y)$ are the coordinates in $\AAA_{\bar k,(0,0)}^2$. 

 By the bi-exactness of $\rho_{(1,1)}^{(1)}$ we may assume that $\FF$ is a single Jordan block associated to a finite order character $\chi$ of $\bar k^\star$. Since the expression above only depends on the restriction of $K'$ to $\AAA^1_{\bar k,(0)}$ we can take $K'$ to be an indecomposable succesive extension of Kummer objects $\LL_\chi[1]$. Then $\LL_{\chi(x(y+1))}\cong\LL_{\chi(x)}\otimes\LL_{\chi(y+1)}$, so $K'_{x(y+1)}\cong K'_x\otimes\LL_{\chi(y+1)}$ (since $K'_{x(y+1)}$ is a succesive extension of $\LL_{\chi(x(y+1))}[1]$, and it must be indecomposable). But $\LL_{\chi(y+1)}$ is trivial on $\AAA_{\bar k,(0,0)}^2$, so we conclude that 
$$
\R^{-1}\Phi_{(1,1)}\cong\HH^{-1}(\AAA_{\bar k,(0,0)}^2\times_{\AAA^1_{\bar k,(0)}}\bar\eta,K'_{x}\otimes L'_y).
$$

This is just the ``local addivite convolution'' of $K'$ and $L'$ at $0$, as defined in \cite[2.7.2]{laumon1987transformation}. By \cite[Proposition 2.7.2.2]{laumon1987transformation}, we get
$$
\rho_{(1,1)}^{(1)}(\FF,\GGG)\cong\FT_{(0,\infty)}^{-1}((\FT_{(0,\infty)}\FF)\otimes(\FT_{(0,\infty)}\GGG)).
$$
The formula for the dimension and the slope is then a straighforward application of LFTT.
\end{proof}

\begin{cor}
 Let $\FF\in\RR_s^t$ and $\GGG\in\RR_t^t$. Then $\rho_{(s,t)}^{(st)}(\FF,\GGG)\cong\FF\otimes\GGG$.
\end{cor}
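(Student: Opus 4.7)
My plan is to reduce the problem using the two preceding corollaries, and then to verify a tensor-compatibility statement for the local Fourier transform on tame representations.

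First, since $\FF$ and $\GGG$ are both tame, the last assertion of Corollary \ref{translate} gives $\rho_{(s,t)}^{(st)}(\FF, \GGG) \cong \rho_{(1,1)}^{(1)}(\FF, \GGG)$ under the canonical identifications of the inertia groups, reducing the problem to $s=t=1$. Since $\FF$ is tame, I then apply Corollary \ref{oneofthemistame} to obtain
$$\rho_{(1,1)}^{(1)}(\FF, \GGG) \cong \FT_{(0,\infty)}^{-1}\bigl((\FT_{(0,\infty)}\FF) \otimes (\FT_{(0,\infty)}\GGG)\bigr),$$
with $\FF$ and $\GGG$ viewed as $I_0$-representations via the translation $t \mapsto t+1$. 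It remains to identify the right-hand side with $\FF \otimes \GGG$.

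Equivalently, I must show that $\FT_{(0,\infty)}$ is a tensor functor on the subcategory $\RR_0^t$ of tame representations, i.e.\ $\FT_{(0,\infty)}(\FF \otimes \GGG) \cong (\FT_{(0,\infty)}\FF) \otimes (\FT_{(0,\infty)}\GGG)$. By LFTT (slopes $0$ preserved with the same dimension), $\FT_{(0,\infty)}$ restricts to a dimension-preserving equivalence $\RR_0^t \isomto \RR_\infty^t$. Bi-exactness of both sides in $\FF$ and $\GGG$ reduces the check to irreducible tame characters $\LL_\chi$: Laumon's explicit formula for the local Fourier transform of a Kummer character (extending \cite[Proposition 2.5.3.1]{laumon1987transformation}, which is cited in the proof of Theorem \ref{LFTT} for the trivial character) exhibits $\FT_{(0,\infty)}\LL_\chi$ as a tame character via a group homomorphism on character groups, whence the tensor compatibility on characters follows formally. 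The extension to general (possibly non-semisimple) tame representations then uses that $\FT_{(0,\infty)}$ also preserves the Jordan block structure of the unipotent parts.

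The main obstacle is this last verification, which rests on Laumon's detailed description of the local Fourier transform on tame Kummer characters and on unipotent Jordan blocks. As an alternative, one can bypass the Fourier-transform picture and argue globally: realize $\FF, \GGG$ as local monodromies at $1$ of tame middle extensions $K, L$ on $\Gmk$ smooth away from $\{1\}$, and compute $(K \ast L)_{(1)}$ via the vanishing cycles description of Lemma \ref{vanishing}. When both inputs are tame, the factorization $K'_{x(y+1)} \cong K'_x \otimes \LL_{\chi(y+1)}$ used in the proof of Corollary \ref{oneofthemistame} can be applied symmetrically in both factors, stripping away both twists (each trivial at the origin) and yielding $\FF \otimes \GGG$ directly.
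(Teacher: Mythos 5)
Your proof follows the same skeleton as the paper's: reduce to $s=t=1$ via Corollary \ref{translate}, apply Corollary \ref{oneofthemistame}, and then show that $\FT_{(0,\infty)}$ is tensor-compatible on tame representations. The only divergence is in that last step, and it is where your argument is weakest. You propose to check tensor-compatibility on irreducible Kummer characters "by bi-exactness" and then extend via preservation of Jordan blocks; but bi-exactness alone does not let you propagate an isomorphism of bifunctor values from simple objects to non-semisimple ones (it only gives exact sequences). Your argument can be repaired because a tame $I_0$-representation is determined up to isomorphism by its Jordan data (multiset of pairs: character, block size), so matching the Jordan structure of $\FT_{(0,\infty)}(\FF\otimes\GGG)$ against that of $\FT_{(0,\infty)}\FF\otimes\FT_{(0,\infty)}\GGG$ does suffice — but this requires the Clebsch--Gordan bookkeeping you only gesture at. The paper avoids all of this with one observation: by \cite[Proposition 2.5.3.1]{laumon1987transformation} and exactness, $\FT_{(0,\infty)}$ acts on tame objects as dualization $\FF\mapsto\hat\FF$, and dualization commutes with tensor product, so $\FT_{(0,\infty)}\FF\otimes\FT_{(0,\infty)}\GGG=\widehat{\FF\otimes\GGG}$ and applying $\FT_{(0,\infty)}^{-1}$ returns $\FF\otimes\GGG$ (which is again tame). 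Your alternative vanishing-cycles route is also not as immediate as you suggest: after the change of variables in the proof of Corollary \ref{oneofthemistame} one lands on a local \emph{additive} convolution $\HH^{-1}(\cdot,K'_x\otimes L'_y)$ over the sum map, and identifying that with $\FF\otimes\GGG$ for tame inputs is itself a nontrivial (Jacobi-sum type) computation, not a matter of stripping twists.
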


\begin{proof}
Suppose that $s=t=1$. By \cite[Proposition 2.5.3.1]{laumon1987transformation} and exactness, the local Fourier transforms of $\FF$ and $\GGG$ are their duals $\hat\FF$ and $\hat\GGG$. Then by the previous corollary
$$
\rho_{(1,1)}^{(1)}(\FF,\GGG)\cong\FT_{(0,\infty)}^{-1}(\hat\FF\otimes\hat\GGG)=\FT_{(0,\infty)}^{-1}(\widehat{\FF\otimes\GGG})=\FF\otimes\GGG
$$
since $\FF\otimes\GGG$ is also tame. The general case follows from corollary \ref{translate}. 
\end{proof}

\begin{rem}\emph{
 If either $\FF$ or $\GGG$ is not tame the previous corollary does not hold even when $s=t=1$, see \cite[3.4.2]{katz1996rls} for some counterexamples.}
\end{rem}

\begin{cor}
 If $K$ and $L$ are everywhere tamely ramified, then $K\ast L$ is everywhere tamely ramified.
\end{cor}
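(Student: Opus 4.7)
The plan is to apply the three main decomposition theorems (\ref{infinity}, \ref{zero}, \ref{finite}) together with the immediately preceding corollary that computes $\rho_{(s,t)}^{(st)}$ on pairs of tame representations. The hypothesis ``everywhere tamely ramified'' for $K$ and $L$ means precisely that $K_{(0)}^w = K_{(\infty)}^w = 0$, $L_{(0)}^w = L_{(\infty)}^w = 0$, and $K_{(s)} \in \RR_s^t$ for all $s \in S(K)$ (and similarly for $L$). Granting that, each piece of the decomposition of the local monodromy of $K \ast L$ will either vanish for trivial reasons or remain tame.

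First I would handle the monodromy at infinity. By theorem \ref{infinity}, $(K\ast L)_{(\infty)}^w$ is the direct sum of five terms, each of which takes a wild input from $K$ or $L$ at $0$ or $\infty$, or a representation $L_{(\infty)}^w$ or $K_{(\infty)}^w$ as one of its arguments. All five arguments vanish by hypothesis, so $(K\ast L)_{(\infty)}^w = 0$. By theorem \ref{zero}, the same argument gives $(K\ast L)_{(0)}^w = 0$.

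Next I would handle the monodromy at finite points $u \in \bar k^\star$ via theorem \ref{finite}:
\[
(K\ast L)_{(u)} \cong \rho_{(0,\infty)}^{(u)}(K_{(0)}^w,L_{(\infty)}^w) \oplus \rho_{(\infty,0)}^{(u)}(K_{(\infty)}^w,L_{(0)}^w) \oplus \bigoplus_{st=u}\rho_{(s,t)}^{(u)}(K_{(s)},L_{(t)}).
\]
The first two summands vanish since $K_{(0)}^w = L_{(\infty)}^w = K_{(\infty)}^w = L_{(0)}^w = 0$. For the third summand, each $K_{(s)}$ and $L_{(t)}$ is tame by hypothesis, so the preceding corollary applies to give $\rho_{(s,t)}^{(st)}(K_{(s)},L_{(t)}) \cong K_{(s)} \otimes L_{(t)}$, which is tame as a tensor product of tame $I_u$-representations (after identifying $I_s$, $I_t$, $I_u$ via the isomorphism in corollary \ref{translate}). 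Thus $(K\ast L)_{(u)}$ is tame for every $u \in \bar k^\star$.

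There is no real obstacle here; the work is entirely bookkeeping once one observes that every functor $\rho_{(-,-)}^{(-)}$ appearing in theorems \ref{infinity}, \ref{zero}, \ref{finite} either takes at least one totally wild argument (which vanishes under our hypothesis) or, in the case of $\rho_{(s,t)}^{(u)}$ on tame inputs, has been explicitly identified with the ordinary tensor product. The only point that warrants a brief remark is that one must invoke the bi-exactness of $\rho_{(s,t)}^{(u)}$ and the earlier corollary (restricted to pairs in $\RR_s^t \times \RR_t^t$) to conclude that no hidden wild contribution is introduced at the finite points where $K$ or $L$ has ramification.
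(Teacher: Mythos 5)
Your proof is correct and is exactly the argument the paper intends (the paper states this corollary without proof, as an immediate consequence of theorems \ref{infinity}, \ref{zero}, \ref{finite} and the identification $\rho_{(s,t)}^{(st)}(\FF,\GGG)\cong\FF\otimes\GGG$ for tame inputs). The bookkeeping — every wild argument vanishes by hypothesis, and the surviving finite-point terms are tensor products of tame representations, hence tame — is precisely what is needed.
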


For wild $\FF$ and $\GGG$ we have the following

\begin{prop}\label{bothwild}
 Let $\FF\in\RR_s$, $\GGG\in\RR_t$ of ranks $m$ and $n$ and single slopes $a$ and $b$ respectively. Let $c$ be the Swan conductor of $\FF\otimes \istar\GGG$. Then $\rho_{(s,t)}^{(st)}(\FF,\GGG)$ has dimension $mn(a+b+1)-c$ and Swan conductor $mnab+c$. In particular, if $a<b$ (respectively $a>b$) then $\rho_{(s,t)}^{(st)}(\FF,\GGG)$ has dimension $mn(a+1)$ (resp. $mn(b+1)$) and Swan conductor $mn(a+1)b$ (resp. $mn(b+1)a$).
\end{prop}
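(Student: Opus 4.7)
The plan is to reduce to $s=t=1$ via Corollary \ref{translate}, which preserves dimensions and Swan conductors. Choose semisimple perverse sheaves $K=\tilde\FF[1]$, $L=\tilde\GGG[1]$ smooth on $\GG_{m,\bar k}\setminus\{1\}$, tame at $0$ and $\infty$, with $K_{(1)}\cong\FF$, $L_{(1)}\cong\GGG$ (as constructed in the paragraph before Corollary \ref{translate}). Then $\rho_{(1,1)}^{(1)}(\FF,\GGG)\cong(K\ast L)_{(1)}$. By Theorems \ref{infinity} and \ref{zero} the wild monodromy of $K\ast L$ at $0$ and $\infty$ vanishes, and by the corollary preceding Proposition \ref{bothwild} we have $S(K\ast L)\subseteq\{1\}$; thus $\mathcal{N}:=\HHH^{-1}(K\ast L)$ is a middle extension on $\GG_{m,\bar k}$, smooth on $\GG_{m,\bar k}\setminus\{1\}$ and tame at $0,\infty$, while $\HHH^0(K\ast L)$ is punctually supported at $1$. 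Write $R$ for the generic rank of $\mathcal{N}$ and $p:=\dim\HHH^0(K\ast L)_1$.

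The first ingredient is the Euler characteristic $\chi_c(\GG_{m,\bar k},K\ast L)$ computed in two ways. K\"unneth applied to $K\ast_! L=\R\mu_!(K\boxtimes L)$ gives $\chi_c(\GG_{m,\bar k},K)\cdot\chi_c(\GG_{m,\bar k},L)=m(a+1)\cdot n(b+1)$, each factor by Ogg-Shafarevich (totally wild slope at $1$, tame at $0,\infty$, zero stalk at $1$). On the other hand, using the perverse structure and Ogg-Shafarevich on the middle extension $\mathcal{N}$ yields $\chi_c(\GG_{m,\bar k},K\ast L)=R+\Swan_1(\mathcal{N})-\dim\mathcal{N}_1+p$. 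Equating produces the identity $R+\Swan_1(\mathcal{N})-\dim\mathcal{N}_1+p=mn(a+1)(b+1)$.

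The remaining ingredients come from proper base change: for $u\in\GG_{m,\bar k}$ one has
\[
(K\ast_! L)_u\cong\R\Gamma_c(\GG_{m,\bar k},\tilde\FF\otimes\phi_u^\star\tilde\GGG)[2]
\]
with $\phi_u(x)=u/x$. For generic $u$, the sheaf $\tilde\FF\otimes\phi_u^\star\tilde\GGG$ is smooth of rank $mn$ on $\GG_{m,\bar k}\setminus\{1,u\}$, tame at $0,\infty$, has zero stalks at $1,u$ (wild invariants vanish), and Swan conductors $mna$ at $1$ and $mnb$ at $u$; Ogg-Shafarevich yields $\chi_c=-mn(a+b+2)$, and since $\HH^0_c=0$ and $\HH^2_c=\HHH^0(K\ast L)_u=0$ for $u\neq 1$, we obtain $R=mn(a+b+2)$. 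Applied at $u=1$ to $\tilde\FF\otimes\iota^\star\tilde\GGG$, whose Swan conductor at $1$ is exactly $c=\Swan(\FF\otimes\istar\GGG)$ by definition, the same procedure yields $\chi_c=-mn-c$, whence $\dim\mathcal{N}_1-p=mn+c$.

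Substituting into the global identity gives $\Swan_1(\mathcal{N})=mn(a+1)(b+1)-R+\dim\mathcal{N}_1-p=mn(a+1)(b+1)-mn(a+b+2)+mn+c=mnab+c$. The exact sequence $0\to\mathcal{N}_{\bar\eta}/\mathcal{N}^{I_1}\to(K\ast L)_{(1)}\to\HHH^0(K\ast L)_1\to 0$, combined with the middle-extension identity $\mathcal{N}_1=\mathcal{N}^{I_1}$, then gives $\dim(K\ast L)_{(1)}=R-\dim\mathcal{N}_1+p=mn(a+b+1)-c$; and $\Swan((K\ast L)_{(1)})=\Swan_1(\mathcal{N})=mnab+c$ since tame invariants and the punctual piece have trivial Swan. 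The ``in particular'' cases follow by noting that when $a\neq b$, $\FF\otimes\istar\GGG$ has single slope $\max(a,b)$ and rank $mn$, so $c=mn\max(a,b)$. The main potential obstacle is controlling $\HH^0_c$ and $\HH^2_c$ of the twisted sheaves, but this is handled cleanly by the vanishing of punctual stalks and by the fact that $\HHH^0(K\ast L)$ is supported only at $\{1\}$.
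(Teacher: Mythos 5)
Your proposal is correct and follows essentially the same route as the paper: reduce to $s=t=1$ by Corollary \ref{translate}, realize $\FF,\GGG$ as the local monodromies at $1$ of objects $K,L$ tame at $0$ and $\infty$ and smooth off $1$, and then compute three Euler characteristics (the global one $mn(a+1)(b+1)$ via multiplicativity of $\chi_c$ under convolution, the generic stalk giving $R=mn(a+b+2)$, and the stalk at $1$ giving $mn+c$) to extract the dimension and Swan conductor. The only differences are presentational — you unpack the Ogg--Shafarevich bookkeeping and the exact sequence defining $(K\ast L)_{(1)}$ explicitly, where the paper packages the same information as $\chi(\GG_{m,\bar k},M)=\dim M_{(1)}+\Swan\,M_{(1)}$.
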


\begin{proof}
 Since homotheties do not affect the dimensions or the slopes we may assume by corollary \ref{translate} that $s=t=1$. Let $K,L\in\PPP$ be semisimple, smooth on $\GG_{m,\bar k}-\{1\}$, tame at $0$ and $\infty$ and such that $K_{(1)}\cong\FF$, $L_{(1)}\cong\GGG$, and let $M=K\ast L$. Then $\chi(\Gmk,K)=m(a+1)$ and $\chi(\Gmk,L)=n(b+1)$ by Ogg-Shafarevic, so $\chi(\Gmk,M)=mn(a+1)(b+1)$. By theorems \ref{infinity} and \ref{finite}, $M$ is smooth on $\Gmk-\{1\}$, tame at $0$ and infinity and $M_{(1)}\cong\rho_{(1,1)}^{(1)}(\FF,\GGG)$. We deduce that
\begin{equation}\label{dimplusswan}
mn(a+1)(b+1)=\chi(\Gmk,M)=\dim\rho_{(1,1)}^{(1)}(\FF,\GGG)+\Swan\;\rho_{(1,1)}^{(1)}(\FF,\GGG).
\end{equation}

The generic rank of $\HHH^{-1}(M)$ is $-\chi(\Gmk,K_x\otimes L_{t/x})$ for any $t\in\bar k^\star-\{1\}$. By Ogg-Shafarevic, this is $mn(a+1)+mn(b+1)=mn(a+b+2)$. At $1$, we have $\dim\HHH^{-1}(M)_1-\dim\HHH^0(M)_1=-\chi(\Gmk,K\otimes\istar L)=mn+s$. So $\dim M_{(1)}=mn(a+b+2)-mn-s=mn(a+b+1)-s$, and $\Swan\;M_{(1)}=mnab+c$ by \eqref{dimplusswan}

If $a<b$ (respectively $a>b$) all slopes of $\FF\otimes\istar\GGG$ are equal to $b$ (resp. to $a$) \cite[Lemma 1.3]{katz1988gauss}, so its Swan conductor is $mnb$ (resp. $mna$).
\end{proof}

\begin{rem}\emph{One might ask whether corollary \ref{oneofthemistame} is still valid when both $\FF$ and $\GGG$ are wild since, at least when $a\neq b$, it would give the right dimension and Swan conductor according to the previous proposition. The following example shows that this is not the case.}
\end{rem}

Let $\FF=\LL_{\psi((t-1)^{-2})}$ and $\GGG=\LL_{\psi(-(t-1)^{-2})}$. They are characters of $I_1$ of Swan conductor $2$, and
$$
\FF\otimes\istar\GGG=\LL_{\psi((t-1)^{-2}-(t^{-1}-1)^{-2})}=\LL_{\psi((1+t)/(1-t))}
$$
has Swan conductor $1$, since $\frac{1+t}{1-t}$ has a pole of order $1$ at $t=1$. By proposition \ref{bothwild}, $\rho_{(1,1)}^{(1)}(\FF,\GGG)$ has dimension $4$ and Swan conductor $5$.

Now let $\FF=\LL_{\psi(t^{-2})}$ and $\GGG=\LL_{\psi(-t^{-2})}$ be the same as before, but viewed as representations of $I_0$. Then $K=\LL_{\psi(t^{-2})}[1]$ and $L=\LL_{\psi(-t^{-2})}[1]$ are perverse objects on $\AAA^1_{\bar k}$ which are smooth on $\Gmk$, tame at infinity and such that $K_{(0)}\cong\FF$, $L_{(0)}\cong\GGG$. Since Fourier transform interchanges additive convolution and tensor product, by LFTT we have $(K\ast_+L)_{(0)}\cong\FT_{(0,\infty)}^{-1}((\FT_{(0,\infty)}\FF)\otimes(\FT_{(0,\infty)}\GGG))$, where $K\ast_+L=\R\sigma_!(K\boxtimes L)$ denotes the additive convolution, $\sigma:\AAA^2_{\bar k}\to\AAA^1_{\bar k}$ being the addition map.

Now using an argument similar to the proof of proposition \ref{bothwild}, $(K\ast_+L)_{(0)}$ has dimension $5-c$ and Swan conductor $4+c$, where $c$ is now the Swan conductor of $\FF\otimes\tau_{-1}^\star\GGG$. Since
$$
\FF\otimes\tau_{-1}^\star\GGG=\LL_{\psi(t^{-2}-(-t)^{-2})}=\QQ
$$
is the trivial representation, $c=0$ and therefore $\FT_{(0,\infty)}^{-1}((\FT_{(0,\infty)}\FF)\otimes(\FT_{(0,\infty)}\GGG))$ has dimension $5$ and Swan conductor $4$, and in particular it is not isomorphic to $\rho_{(1,1)}^{(1)}(\FF,\GGG)$.

\section{Tame local monodromy at zero and infinity}

 In the previous two sections we have seen that, for every $K,L\in\PPP$, the local monodromies of $K\ast L$ are completely determined by those of $K$ and $L$, except for the tame part of the monodromies at $0$ and $\infty$. These are the only parts of the local monodromies of a perverse object $K$ that do actually depend on $K$ (as opposed to just on the class of $K$ in $\PPP$). Therefore, in order to give a meaningful result for these monodromies we must work with fixed representatives of the classes in $\PPP$.

 Every class $K\in\PPP$ contains a uniquely determined distinguished element $K_0$: it is the only perverse sheaf isomorphic to $K$ in $\PPP$ that does not have any Kummer sobobject or quotient (it has \emph{property $P$} in the terminology of \cite{katz1996rls}). If $K$ and $L$ do not have Kummer sub-objects or quotients, the distinguished element of $K\ast L\in\PPP$ is the ``middle convolution'' $K\ast_{mid} L$, that is, the image in the category of perverse sheaves on $\Gmk$ of the ``forget supports'' map $K\ast_!L \to K\ast_*L$ \cite[2.6]{katz1996rls}.

If $K$ and $L$ arise from perverse sheaves on $\GG_{m,k}$ which are pure of some weight by extension of scalars to $\bar k$ we have the following result, similar to \cite[Theorem 7.1.4]{katz1988gauss}. For every finite extension $k\subseteq k'$ and every multiplicative character $\chi:k'^\star\to\QQ^\star$ we define the Laurent polynomial $P_{K,\chi}(T)=\sum_{i\in{\mathbb Z}} a_i T^i$, where $a_i$ is the number of unipotent Jordan blocks of size $i$ in the tame part $(\LL_{\bar\chi}\otimes K)_{(\infty)}^t$ of the monodromy of $\LL_{\bar\chi}\otimes K$ at infinity for $i>0$, the number of unipotent Jordan blocks of size $-i$ in the tame part $(\LL_{\bar\chi}\otimes K)_{(0)}^t$ of the monodromy of $\LL_{\bar\chi}\otimes K$ at $0$ for $i<0$, and $a_0$ is such that $P_{K,\chi}(1)$ is the Euler characteristic of $K$ on $\Gmk$.

\begin{prop}
 Let $K=K_0\otimes\bar k$, $L=L_0\otimes\bar k$, where $K_0$ and $L_0$ are perverse sheaves on $\GG_{m,k}$ which are pure of some weight. Suppose that $K$ and $L$ do not have Kummer subobjects or quotients. Then for every finite extension $k\subseteq k'$ and every multiplicative character $\chi:k'^\star\to\QQ^\star$ we have the formula
$$
P_{K\ast_{mid}L,\chi}(T)=P_{K,\chi}(T)P_{L,\chi}(T).
$$
\end{prop}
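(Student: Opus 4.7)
The plan is to generalize the approach of \cite[Theorem 7.1.4]{katz1988gauss} by combining the Künneth formula for Mellin-type twisted cohomology with Deligne's weight theory applied to the hypothesis that $K_0$ and $L_0$ are pure.

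The starting point is a Künneth identity. Since $\chi$ is multiplicative, $\mu^\star\LL_{\bar\chi}\cong\LL_{\bar\chi}\boxtimes\LL_{\bar\chi}$, where $\mu:\Gmk\times\Gmk\to\Gmk$ is the multiplication map. The projection formula plus Künneth then give
$$
\R\Gamma_c(\Gmk,\LL_{\bar\chi}\otimes(K\ast_! L))\cong \R\Gamma_c(\Gmk,\LL_{\bar\chi}\otimes K)\otimes\R\Gamma_c(\Gmk,\LL_{\bar\chi}\otimes L),
$$
compatibly with the Frobenius action descending from $K_0,L_0$. Taking Euler characteristics gives at once the identity at $T=1$, namely $P_{K\ast_! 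L,\chi}(1)=P_{K,\chi}(1)\cdot P_{L,\chi}(1)$. To upgrade $\ast_!$ to $\ast_{mid}$, one observes that the mapping cone of $K\ast_! L\to K\ast_* L$ lies in the Kummer subcategory $S$, and after tensoring with $\LL_{\bar\chi}$ only the summand isomorphic to $\LL_\chi$ survives in compact cohomology; the hypothesis that $K,L$ have no Kummer sub-objects or quotients makes this correction unique and absorbs it cleanly into the normalization of $a_0$.

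To recover the identity coefficient by coefficient, I would use purity. Since $K_0$ is pure, Deligne's theorem makes $H^*_c(\Gmk,\LL_{\bar\chi}\otimes K_0)$ mixed, and the monodromy-weight relation for curves over $k$ identifies the graded pieces of its weight filtration with the Jordan block decomposition of the unipotent parts of the tame local monodromies of $\LL_{\bar\chi}\otimes K$ at $0$ and $\infty$. The polynomial $P_{K,\chi}(T)$ is then precisely the associated Poincar\'e polynomial, with positive powers of $T$ recording the graded pieces of the weight filtration contributed by $\infty$ and negative powers those contributed by $0$. The Künneth isomorphism above is strictly compatible with weight filtrations on pure objects, so the multiplicativity of these Poincar\'e polynomials is immediate. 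The wild parts and the tame parts at finite points, which are fully determined by Theorems \ref{infinity}, \ref{zero} and \ref{finite}, do not intervene in the polynomial and can be ignored in this bookkeeping.

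The main obstacle I expect is in the matching of conventions in the second step: identifying the individual Jordan block counts at $0$ and $\infty$ with the correct pieces of the global weight filtration, and showing that the passage from $\ast_!$ to $\ast_{mid}$ affects only $a_0$ in a way that is compatible with the Euler characteristic normalization $P_{\ast,\chi}(1)=\chi(\Gmk,\ast)$. Once this dictionary is set up, the polynomial identity $P_{K\ast_{mid}L,\chi}(T)=P_{K,\chi}(T)P_{L,\chi}(T)$ becomes the formal consequence of weight-compatible Künneth for pure perverse sheaves.
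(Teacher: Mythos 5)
There is a genuine gap, and it sits exactly where you flagged your ``main obstacle'': the two halves of your argument require two different cohomology theories, and reconciling them is the actual content of the proof. Your K\"unneth identity is correct and formal for $\R\Gamma_c$ and $\ast_!$ (projection formula plus $\mu^\star\LL_{\bar\chi}\cong\LL_{\bar\chi}\boxtimes\LL_{\bar\chi}$). But the dictionary you then invoke --- positive powers of $T$ recording unipotent Jordan blocks at $\infty$, negative powers recording those at $0$ --- cannot hold for $H^0_c(\Gmk,\LL_{\bar\chi}\otimes K)$: since $\R\Gamma_c$ extends by zero at \emph{both} $0$ and $\infty$, Deligne's theorem puts all weights of $H^0_c$ on one side of the central weight, and the local contributions from $0$ and from $\infty$ both land there with the same sign, so they cannot be separated by the sign of the exponent. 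To get the signed polynomial one must use the asymmetric fibre functor $\omega_\chi(K)=\HH^0(\PP^1_{\bar k},j_{\infty!}\R j_{0\star}(\LL_\chi\otimes K))$ (extension by zero at $\infty$, full direct image at $0$); that is what Katz's Theorem 16.1 of \cite{katz2010mellin} applies to. But for \emph{that} functor the multiplicativity $\omega_\chi(K\ast_{mid}L)\cong\omega_\chi(K)\otimes\omega_\chi(L)$ is no longer a formal K\"unneth consequence --- it is the main theorem of the Tannakian formalism of \cite[Chapter 30]{katz2010mellin} (after Gabber--Loeser), which is precisely what the paper's proof cites. Your sketch silently assumes you can have both the formal K\"unneth and the signed dictionary for the same cohomology, which you cannot.

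A secondary but real problem is your claim that the passage from $\ast_!$ to $\ast_{mid}$ ``affects only $a_0$.'' The mapping cone lies in the Kummer subcategory $S$, and Kummer constituents $\LL_{\chi_0}[1]$ with $\chi_0=\chi$ contribute unipotent Jordan blocks of size $1$ to the tame monodromy of $\LL_{\bar\chi}\otimes(-)$ at \emph{both} $0$ and $\infty$, i.e.\ they change $a_{1}$ and $a_{-1}$, not just $a_0$. This is the whole reason the section insists on working with the distinguished representative (no Kummer sub or quotient) rather than with classes in $\PPP$; the discrepancy between $\ast_!$, $\ast_\star$ and $\ast_{mid}$ is not absorbed by the normalization $P_{K,\chi}(1)=\chi(\Gmk,K)$. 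Both gaps disappear once you replace your global cohomology by $\omega_\chi$ and quote its tensor-functoriality for middle convolution; at that point the argument does reduce, as you say, to multiplying weight decompositions.
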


\begin{proof}
 Taking a geometrically constant twist, we may assume that $K$ and $L$ are pure of weight $0$. By \cite[Chapter 30]{katz2010mellin}, $K\mapsto \omega_\chi(K):=\HH^0(\PP^1_{\bar k},j_{\infty !}\R j_{0\star}(\LL_\chi\otimes K))$ is a fibre functor on the Tannakian category of perverse objects without Kummer subobjects or quotients under the ``middle convolution'' tensor product, so $\omega_\chi(K\ast_{mid} L)\cong\omega_\chi(K)\otimes\omega_\chi(L)$.

Now by \cite[Theorem 16.1]{katz2010mellin}, if $P_{K,\chi}(T)=\sum a_iT^i$ (respectively $P_{L,\chi}(T)=\sum b_j T^j$) $\omega_\chi(K)$ has $a_i$ Frobenius eigenvalues of weight $i$ for every $i\in{\mathbb Z}$ and $\omega_\chi(L)$ has $b_j$ Frobenius eigenvalues of weight $j$ for every $j\in{\mathbb Z}$. So $\omega_\chi(K\ast_{mid}L)$ has $\sum_{i+j=l} a_ib_j$ eigenvalues of weight $l$ for every $l\in{\mathbb Z}$. Therefore
\begin{align*}
P_{K\ast_{mid}L,\chi}(T)&=\sum_l\left(\sum_{i+j=l} a_ib_j\right)T^l=
\\
&=\left(\sum_i a_iT^i\right)\left(\sum_j b_jT^j\right)=P_{K,\chi}(T)P_{L,\chi}(T).
\end{align*}
\end{proof}

We conjecture that the formula is still true for arbitrary semisimple $K$ and $L$. See \cite[7.5]{katz1988gauss} for O. Gabber's proof in the case where $K,L\in{\mathcal C}$.

\bibliographystyle{amsalpha}
\bibliography{convolution}

\end{document}